\numberwithin{equation}{section}
\newtheorem{thm}{Theorem}[section]
\newtheorem{lemma}[thm]{Lemma}
\newtheorem{cor}[thm]{Corollary}
\newtheorem{prop}[thm]{Proposition}
\newtheorem{rmk}[thm]{Remark}
\newtheorem{defi}[thm]{Definition}
\newtheorem{claim}[thm]{Claim}
\numberwithin{equation}{section}
\newcommand{\ud}{\,\mathrm{d}}
\newcommand{\eps}{\varepsilon}
\newcommand{\vphi}{\varphi}
\newcommand{\R}{{\mathbb{R}}}
\newcommand{\M}{{\mathcal{M}}}
\newcommand{\C}{{\mathcal{C}}}
\newcommand{\ml}{{\mathcal{L}}}
\newcommand{\mm}{{\mathcal{M}}}
\newcommand{\me}{{\mathcal{E}}}
\def\Id{\operatorname{Id}}
\def\p{\partial}
\definecolor{darkgreen}{rgb}{0.0, 0.42, 0.0}
\author[JA Carrillo]{José Antonio Carrillo} \address{Mathematical Institute, University of Oxford, Oxford, OX2 6GG, UK.}
\email{carrillo@maths.ox.ac.uk.}
\author[X Feng]{Xuanrui Feng}
\address{School of Mathematical Sciences, Peking University, Beijing 100871, China.}
\email{pkufengxuanrui@stu.pku.edu.cn}
\author[S Guo]{Shuchen Guo}
\address{Mathematical Institute, University of Oxford, Oxford, OX2 6GG, UK.}
\email{guo@maths.ox.ac.uk}
\author[PE Jabin]{Pierre-Emmanuel Jabin} \address{Department of Mathematics and Huck Institutes, Pennsylvania State University, State College, PA 16801, USA.}
\email{pejabin@psu.edu.}
\author[Z Wang]{Zhenfu Wang}
\address{Beijing International Center for Mathematical Research, Peking University, Beijing 100871, China}
\email{zwang@bicmr.pku.edu.cn}
\title[Particle Approximation of the Landau Equation]
{Relative Entropy Method for Particle Approximation of the Landau Equation for Maxwellian Molecules}
\subjclass[2020]{82C40, 35Q70, 35B50, 60F99}
\keywords{Landau-Maxwellian Equation, Propagation of Chaos, Parabolic Maximum Principle, Relative Entropy, Law of Large Numbers}
\date{\today}
\begin{document}

\begin{abstract}
We derive the spatially homogeneous Landau equation for Maxwellian molecules from a natural stochastic interacting particle system. More precisely, we control the relative entropy between the joint law of the particle system and the tensorized law of the Landau equation. To obtain this, we establish as key tools the pointwise logarithmic gradient and Hessian estimates of the density function and also a new Law of Large Numbers result for the particle system. The logarithmic estimates are derived via the Bernstein method and the parabolic maximum principle, while the Law of Large Numbers result comes from crucial observations on the control of moments at the particle level.

\end{abstract}

\maketitle


\section{Introduction}

We consider the spatially homogeneous Landau equation for Maxwellian molecules in dimension $d$ in the following form
\begin{equation}\label{equation}
   \begin{cases}
\frac{\partial f}{\partial t}=Q(f,f)=\frac{\partial}{\partial v_\alpha}\int_{\mathbb{R}^d} a_{\alpha \beta}(v-v_\ast)\Big(f(v_\ast)\frac{\partial f(v)}{\partial v_\beta}-f(v)\frac{\partial f(v_\ast)}{\partial v_{\ast \beta}}\Big) \ud v_\ast,\\
f(0)=f_{0}, \end{cases}
\end{equation}
where $t \geq 0$ and $v \in \R^d$, and we shall always adopt the Einsten's summation convention with $\alpha$ and $\beta$ running from 1 to  $d$.  Hereinafter we shall call it the Landau-Maxwellian equation. The nonlinear operator $Q$ is referred as the Landau collision operator. The coefficient matrix appearing in the integrand of the operator $Q$ given by
\begin{equation*}\label{matrix}
a(z)=|z|^2\Pi(z),\quad \mbox{where the projection matrix  }\quad\Pi_{\alpha \beta}(z)=\delta_{\alpha \beta}-\frac{z_\alpha z_\beta}{|z|^2}.
\end{equation*}
This matrix is symmetric and non-negative (here $\delta_{\alpha \beta}$ is the  Kronecker delta).  The unknown function $f$ represents the density in velocity of gas molecules at time $t\geq 0$ in a plasma undergoing short-range interactions according to the Maxwellian potential. We shall always assume that $f$ is a non-negative density function with finite moments up to second-order in order to have finite kinetic energy.

The celebrated Landau equation \cite{landau1936kinetische} has been widely used in plasma physics as it can be derived from the Boltzmann equation \cite{boltzmann1970weitere,boltzmann2022lectures} through the grazing collision limit. The most physically meaningful Landau equation should consider Coulomb interactions. In this work, we instead focus on the Maxwellian molecules case due to its simpler structure as pointed out for example in \cite{villani1998spatially}. 

\subsection{Notations}

We gather here some global notations throughout this article. The Landau-Maxwellian equation conserves the total mass, momentum and kinetic energy. Hence without loss of generality,  we normalize the quantities to
\begin{equation}\label{conservation}
    \int_{\R^d} f(v) \ud v=1, \quad \int_{\R^d} vf(v) \ud v=0, \quad \int_{\R^d} |v|^2 f(v) \ud v=d,
\end{equation}
where we always use $f(t,v)$ to represent the solution to the  Landau-Maxwellian equation \eqref{equation}. We can also rewrite \eqref{equation} into a compact formulation as
\begin{equation}\label{Landau}
   \p_t f=\nabla \cdot [(a\ast f)\nabla f-(b\ast f)f], \tag{1.1a}
\end{equation}
or in a non-divergence form as
\begin{equation}\label{Landau1}
   \p_t f=(a\ast f):\nabla^2 f - (c\ast f)f, \tag{1.1b} 
\end{equation}
with the vector field $b=\nabla\cdot a$ and the scalar $c=\nabla \cdot b$, namely
\begin{equation*}
    b_\alpha(v)=\partial_\beta a_{\alpha\beta}(v)=-(d-1)v_\alpha, \quad c(v)=\partial_\alpha b_\alpha(v)=-d(d-1),
\end{equation*}
which will also appear in the simplified formulation of the Landau collision operator in Section \ref{formulation}. 

\subsection{Particle System}

We are interested in deriving the Landau-Maxwellian equation as the mean-field limit of several many-particle systems, via the classical strategy, namely the {\em propagation of chaos} argument. Our starting point is the following interacting particle system of $N$ indistinguishable particles, proposed for instance by Fournier \cite{fournier2009particle}: for $i=1, 2, \cdots, N$, 
\begin{equation}\label{SDE}
 \begin{cases} & \ud V_t^i=\frac{2}{N}\sum_{j=1}^{N}b(V_t^i-V_t^j)\ud t+\sqrt{2}\Big(\frac{1}{N}\sum_{j=1}^{N}a(V_t^i-V_t^j)\Big)^{\frac{1}{2}}\ud B_t^i, \\ & V_0^i=\zeta^i,   
 \end{cases}
\end{equation}
where the $(B_{\cdot}^i)_{i\geq 1}$ denote $N$ independent copies of standard $d$-dimensional Brownian motions which are independent with the initial data $\zeta^i$. The diffusion coefficient matrix is given by the square root of the non-negative symmetric matrix.

In our setting for the Landau-Maxwellian equation, we have that $a(0)=0$ and $b(0)=0$ hence the self-interaction is automatically 0. We thus use the convention $\textstyle\sum_{j=1}^N$  in \eqref{SDE} instead of the one  $\textstyle\sum_{j \ne i}$. The same formulation of particle systems has been considered in some previous works for instance  \cite{fournier2009particle,carrillo2024mean}.  Under the assumption of Maxwellian molecules, the existence and uniqueness of the strong solution of the system \eqref{SDE} have been proved in \cite{fournier2009particle}.
Applying the It\^{o}'s formula and using the relation $\nabla\cdot a=b$, we write down the Liouville equation (Landau master  equation) of the $N$-particle joint distribution $F_N(t, V)$, where $V=(v^1,\ldots,v^N) \in \R^{dN}$,  as 
\begin{equation}\label{master}
\begin{aligned}
\p_t F_N= &\, \sum_{i=1}^{N}\nabla_{v^{i}}\cdot\Big[\frac{1}{N}\sum_{j=1}^{N}a(v^{i}-v^{j})\nabla_{v^{i}}F_N-\frac{1}{N}\sum_{j=1}^{N}b(v^{i}-v^{j})F_N\Big],
\end{aligned}
\end{equation}
where the initial data is denoted by $F_N(0)$. And we also define the $k$-marginal of the joint distribution $F_{N}$, which is simply the joint distribution for the first $k$ particles, thanks to the exchangeability, as
\begin{equation*}
    F_{N,k}(t,v^1,...,v^k)=\int_{\R^{d(N-k)}} F_N(t,V) \ud v^{k+1}...\ud v^N.
\end{equation*}
\begin{rmk}
To prove the Law of Large Numbers Theorem in Section \ref{poc}, we need to assume that $F_N(0)$ is symmetric and close to the tensorized law $f_0^{\otimes N}$. To be more precise, if we let the test functions $\vphi_k$ ($k=1,2,3$) be any of $1,v_\alpha, v_\alpha^2, v_\alpha v_\beta, |v|^2$, where $\alpha\neq\beta$, then it requires that the third-marginal of the joint distribution  $F_{N,3}$ satisfies that
$$
\bigg|\int_{\R^{3d}}F_{N,3}(0)\vphi_1(v^1)\vphi_2(v^2)\vphi_3(v^3)\ud v^1\ud v^2\ud v^3-\prod_{k=1}^3\int_{\R^d}f_0(v^k)\vphi_k(v^k)\ud v^k\bigg|\leq \frac{C}{N}
$$ 
for some constant $C$. For simplicity, we can brutally assume that the initial data of \eqref{SDE} are i.i.d., namely $F_N(0)=f^{\otimes N}_0$.
\end{rmk}

We can formally obtain a first {\em a priori} estimate by multiplying the Liouville equation \eqref{master}
with $F_N$ and integrating over the domain, say
\begin{equation*}\label{L2 apriori}
\begin{aligned}
\frac{1}{2}\frac{\ud}{\ud t}\int_{\R^{dN}}F_N^2\ud V+ &\, \sum_{i=1}^{N}\int_{\R^{dN}}\frac{1}{N}\sum_{j=1}^{N}a(v^{i}-v^{j}):(\nabla_{v^{i}}F_N)^{\otimes 2}\ud V\\&-\sum_{i=1}^{N}\int_{\R^{dN}}\nabla_{v^{i}}F_N\cdot \frac{1}{N}\sum_{j=1}^{N}b(v^{i}-v^{j})F_N\ud V=0.
\end{aligned}
\end{equation*}
Together with the identity given by integration by parts
$$
\begin{aligned}
-\int_{\R^{dN}}\nabla_{v^{i}}F_N\cdot \frac{1}{N}\sum_{j=1}^{N}b(v^{i}-v^{j})F_N\ud V&=\frac{1}{2}\int_{\R^{dN}}F_N^2\frac{1}{N}\sum_{j=1}^{N}\nabla\cdot b(v^{i}-v^{j})\\&=-\frac{d(d-1)}{2}\int_{\R^{dN}}F_N^2\ud V,
\end{aligned}
$$
we obtain that
\begin{equation}\label{L2apriori}
\begin{aligned}
&\frac{1}{2}\frac{\ud}{\ud t}\int_{\R^{dN}}F_N^2\ud V+  \sum_{i=1}^{N}\int_{\R^{dN}}\frac{1}{N}\sum_{j=1}^{N}a(v^{i}-v^{j}):(\nabla_{v^{i}}F_N)^{\otimes 2}\ud V\\=& \frac{Nd(d-1)}{2}\int_{\R^{dN}}F_N^2\ud V.
\end{aligned}
\end{equation}
Similarly, multiplying the Liouville equation \eqref{master} with $\log F_N$ and integrating over the domain, we obtain that
\begin{equation}\label{entropyapriori}
\begin{aligned}
&\frac{\ud}{\ud t}\int_{\R^{dN}} F_N\log F_N \ud V + \int_{\R^{dN}}F_N\frac{1}{N}\sum_{i,j=1}^N a(v^i-v^j):(\nabla_{v^i} \log F_N)^{\otimes 2} \ud V = Nd(d-1). 
\end{aligned}        
\end{equation}
The well-posedness of the particle system \eqref{SDE} implies the existence of some measure-valued solution of the Liouville equation \eqref{master}. Combining with the {\em a priori} estimates above, we state the following proposition of existence of the weak solutions. 
\begin{prop}[Existence of Weak Solutions of the Liouville Equation]\label{weaksolution}
    Assume that the initial data $F_N(0)$ for the Liouville equation \eqref{master}  is tensorized, i.e. $F_N(0) = f_0^{\otimes N}$, with  $f_0\in L^1\cap L^2(\R^d)$ satisfying  that 
    $$
    f_0\geq 0,\quad \int_{\R^d}f_0 \ud v=1,\text{ and }\int_{\R^d}f_0\log f_0 \ud v <\infty.
    $$
Then for any $T>0$ and fixed $N$, there exists a weak solution $F_N$ of \eqref{master} satisfying $$F_N(t,V) \geq 0, \quad \int_{\R^{dN}}F_N(t,V) \ud V=1,$$
 $$
\|F_N(t)\|_{L^2(\R^{dN})}\leq e^{\frac{Nd(d-1)t}{2}}\|f_0\|^{N}_{L^2(\R^d)},
 $$ 
 and
 $$
 \int_{\R^{dN}}F_N(t)\log F_N(t) \ud V
\leq N\int_{\R^d}f_0\log f_0 \ud v +Nd(d-1)t
 $$
 for any $t \in [0,T]$.
\end{prop}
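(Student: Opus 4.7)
The plan is to build $F_N$ as the $\eps \to 0$ limit of smooth approximations for which the formal identities \eqref{L2apriori} and \eqref{entropyapriori} are fully rigorous. Concretely, I would mollify the initial density to obtain a sequence $f_0^\eps \in C_c^\infty(\R^d)$ of non-negative unit-mass densities converging to $f_0$ in $L^1 \cap L^2$ and in entropy, and simultaneously regularize the degenerate diffusion matrix by $a^\eps(z) = a(z) + \eps\,\Id$. Since $\Id$ has vanishing divergence, $\nabla \cdot a^\eps = b$ still holds, so the integration-by-parts cancellation producing the constant $Nd(d-1)/2$ on the right-hand side of \eqref{L2apriori} survives unchanged. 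For this regularized system the diffusion is uniformly elliptic; standard parabolic theory (or the well-posedness of the corresponding non-degenerate McKean--Vlasov SDE) yields a smooth, positive, mass-one solution $F_N^\eps$ of the regularized Liouville equation with tensorized initial data $F_N^\eps(0) = (f_0^\eps)^{\otimes N}$.

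For such smooth $F_N^\eps$ the identities \eqref{L2apriori} and \eqref{entropyapriori} hold rigorously, with boundary terms at infinity vanishing by moment decay. Since $a^\eps$ is non-negative definite, the dissipation term on the left-hand side of each identity is $\geq 0$ and can be dropped, yielding
\begin{equation*}
\tfrac{d}{dt} \|F_N^\eps(t)\|_{L^2}^2 \leq Nd(d-1) \|F_N^\eps(t)\|_{L^2}^2, \qquad \tfrac{d}{dt} \int F_N^\eps \log F_N^\eps \ud V \leq Nd(d-1).
\end{equation*}
Gr\"onwall's inequality applied to the first, combined with the tensorization identity $\|(f_0^\eps)^{\otimes N}\|_{L^2}^2 = \|f_0^\eps\|_{L^2}^{2N}$, produces the claimed $L^2$ bound with $f_0^\eps$ in place of $f_0$, while direct time integration of the second (using $\int F_N^\eps(0) \log F_N^\eps(0) \ud V = N \int f_0^\eps \log f_0^\eps \ud v$) gives the entropy bound.

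Passing to the limit $\eps \to 0$ then concludes the argument. The uniform $L^2$ bound gives weak-$*$ compactness in $L^\infty(0,T; L^2(\R^{dN}))$, while conservation of kinetic energy at the particle level provides uniform second-moment control and hence tightness. Extract a limit $F_N$; non-negativity and unit mass are inherited from the approximating sequence, while the $L^2$ and entropy bounds follow by lower semicontinuity of $\|\cdot\|_{L^2}$ and of the (convex) entropy functional along the weak-$*$ convergence.

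The main obstacle I anticipate is verifying that the limit $F_N$ remains a distributional solution of the original, unregularized equation \eqref{master}: the quadratic growth of $a$ makes the passage to the limit in the nonlinear coefficient $\frac{1}{N}\sum_{j} a(v^i - v^j)\nabla_{v^i} F_N^\eps$ delicate, and one must leverage the uniform moment bounds together with a compactness argument of Aubin--Lions type (or a strong stability argument at the SDE level) in order to identify the limit as a genuine weak solution of \eqref{master}. Once this identification is in place, the rest of the claimed bounds on $F_N$ transfer directly from their counterparts on $F_N^\eps$.
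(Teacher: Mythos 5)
Your proposal takes a genuinely different route from the paper. The paper does not give a constructive PDE proof at all: it invokes Fournier's well-posedness of the particle SDE system \eqref{SDE} to obtain a measure-valued solution of the Liouville equation, observes that the \emph{a priori} identities \eqref{L2apriori} and \eqref{entropyapriori} yield the stated $L^2$ and entropy bounds, and for the analytical justification of these formal manipulations it defers to the advection--diffusion theory in Le Bris--Lions. You instead propose a self-contained regularization-and-compactness construction. Both hinge on the same \emph{a priori} estimates, and your observation that $a^\eps = a + \eps\,\Id$ leaves $\nabla\cdot a^\eps = b$ and hence the constant $\tfrac{Nd(d-1)}{2}$ unchanged is exactly the right reason the estimates survive regularization. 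Your approach is more explicit; the paper's is shorter because it outsources the PDE analysis to references and draws the existence from the probabilistic side.

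Two remarks on the technical points you flag. First, you overestimate the difficulty of identifying the limit as a weak solution of \eqref{master}. The Liouville equation is \emph{linear} in $F_N$; the coefficients $\tfrac{1}{N}\sum_j a(v^i - v^j)$ and $\tfrac{1}{N}\sum_j b(v^i - v^j)$ are fixed, $\eps$-independent functions of $V$. Testing against $\Phi \in C_c^\infty([0,T)\times\R^{dN})$ and integrating by parts so that \emph{all} derivatives fall on $\Phi$, the integrand reads $F_N^\eps \bigl[\partial_t\Phi + \tfrac{1}{N}\sum_{i,j} a(v^i-v^j):\nabla^2_{v^i}\Phi + 2\,\tfrac{1}{N}\sum_{i,j} b(v^i-v^j)\cdot\nabla_{v^i}\Phi\bigr]$. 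The bracket lies in $L^1(0,T;L^2)$ because $\Phi$ is compactly supported, so weak-$*$ convergence of $F_N^\eps$ in $L^\infty(0,T;L^2)$ passes to the limit directly; no Aubin--Lions or strong compactness is needed, and the $\eps\,\Id$ contribution vanishes trivially. Second, a gap you do not flag: the convergence $\int f_0^\eps\log f_0^\eps\,\ud v \to \int f_0 \log f_0\,\ud v$ under mollification is not automatic for a general $f_0 \in L^1\cap L^2$ with finite entropy, because of the unbounded negative part of $s\mapsto s\log s$; one needs a truncation step (cut $f_0$ at low values and on large balls, renormalize, then mollify), combined with the $L^2$ and finite-second-moment information, to control $\int f_0^\eps (\log f_0^\eps)_- \,\ud v$ uniformly and obtain the claimed entropy convergence. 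With those two adjustments, the argument goes through.
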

The $L^2$ bound and the entropy bound for $F_N$ are deduced from our {\em a priori} estimates \eqref{L2apriori} and \eqref{entropyapriori}. For more detailed discussion of the advection-diffusion theory, we refer to the book by Le Bris-Lions \cite{le2019parabolic}. In the appendix, we will also see that if $f_0$ has finite $p$-th order moment, then so does the first marginal of joint distribution $F_{N,1}$. Notice also that if $f_0$ has finite entropy, then $F_{N,1}$ has finite entropy by Jensen's inequality.
In our main result Theorem \ref{main},  we will further assume $f_0$ to be $\C^2$ with Gaussian type decay, which naturally satisfies the assumptions for the initial data in Proposition \ref{weaksolution} and Proposition \ref{moment estimate}. 

\begin{rmk}
     There is also another kind of particle system which has been introduced by Fontbona-Gu\'erin-M\'el\'eard \cite{fontbona2009measurability}, also in the sense of probabilistic interpretation, to approximate the Landau-Maxwellian equation, 
     \begin{equation}\label{SDE1}
 \begin{cases} & \ud V_t^i=\frac{2}{N} \sum_{j=1}^N b(V_t^i-V_t^j) \ud t +\frac{\sqrt{2}}{\sqrt{N}}\sum_{j=1}^N a(V_t^i-V_t^j)^{\frac{1}{2}} \ud B_t^{i,j} \\ & V_0^i=\zeta^i
 \end{cases}.
\end{equation}
Here $(B_t^{i,j})_{i,j \geq 1}$ are $N^2$ independent copies of standard $d$-dimensional Brownian motions and are also independent with the initial data $\zeta^i$. We notice that the particle system \eqref{SDE1} shares the same Liouville equation with the previous one \eqref{SDE}. Since we will always work on the Liouville equation level in this article, our results can also be applied to the system \eqref{SDE1}, once we make the same assumptions on the initial data.
\end{rmk}
\begin{rmk}
We also propose another particle system which has the same Liouville equation \eqref{master}, thus our results can also be applied to this case. The SDE system is given by
\begin{equation}\label{enviromental}
 \begin{cases}\ud V_t^i=\frac{2}{N}\sum_{j=1}^{N}b(V_t^i-V_t^j)\ud t+\frac{\sqrt{2}}{\sqrt{N}}\sum_{j=1}^{N}\sum_{\alpha<\beta}\xi_{\alpha,\beta}(V_t^i-V_t^j)\ud B_t^{j,\alpha,\beta} \\ V_0^i=\zeta^i   \end{cases} ,
\end{equation}
where the vector field $\xi_{\alpha,\beta}(v)=\big(0,\ldots,-v_\beta,\ldots, v_\alpha,\ldots 0\big)$ is given with two non-zero components at the $\alpha$-th and $\beta$-th  positions $(\alpha<\beta)$; $(B_t^{j,\alpha,\beta})_{j\geq 1, \alpha<\beta}$ are i.i.d. $1$-dimensional Brownian motions and independent with the initial data $\zeta^i$. Notice that the identity of the quadratic variation term holds: 
$$
\begin{aligned}
&\frac{1}{2}\ud\Big[\frac{\sqrt{2}}{\sqrt{N}}\sum_{j=1}^{N}\sum_{\alpha<\beta}\xi_{\alpha,\beta}(V_t^i-V_t^j)\ud B_t^{j,\alpha,\beta}\Big]_t\\=&\frac{1}{N}\sum_{j=1}^{N}\sum_{\alpha<\beta}\xi_{\alpha,\beta}\otimes\xi_{\alpha,\beta} \ud t=\frac{1}{N}\sum_{j=1}^{N}a(V_t^i-V_t^j)\ud t.
\end{aligned}
$$

The advantage of the formulation is that we write explicitly the martingale term without taking the square root of the corresponding matrices. For non-Maxwellian case, we replace the current vector field by $|v|^{\gamma/2}\xi_{\alpha,\beta}(v)$. This formulation of randomness is inspired by the environmental noise for instance in Guo-Luo \cite{guo2023scaling}.
\end{rmk}

\begin{rmk}
     Apart from the particle systems \eqref{SDE}, \eqref{SDE1} and \eqref{enviromental}, Carrapatoso \cite{carrapatoso2012propagation} has constructed another Kac-type particle system to approximate the Landau-Maxwellian equation, by passing the Kac's walk model to the grazing limit. We present here the weak form of the Liouville equation of this system.
     \begin{equation*}
    \partial_t F_N=\frac{2}{N}\sum_{i,j=1}^N b(v^i-v^j) \cdot \nabla_{v^i}F_N+\frac{1}{N}\sum_{i,j=1}^N a(v^i-v^j):(\nabla_{v^iv^i}^2F_N-\nabla_{v^iv^j}^2F_N).
\end{equation*}
Due to the conservation law of momentum and energy in the collision process of the Kac's walk, the new system also conserves the momentum and energy, which is different from the first two systems, formally leading to the appearance of the cross term $\nabla_{v^iv^j}^2 F_N$ in the Liouville equation. We expect a similar relative entropy estimate for this new system approximating the Landau-Maxwellian equation, but since the Liouville equation has a different form, we shall not deal with it in this work.
\end{rmk}

\subsection{Main Results}

The key idea in this article to establish propagation of chaos is to evaluate the normalized relative entropy defined below between the joint law of the particle system and the $N-$tensor product of the  solution of the limit equation,  bounding it with some quantities vanishing when  $N$ goes to infinity. Then by the classical  Csisz\'ar-Kullback-Pinsker inequality, the relative entropy estimate implies the quantitative  propagation of chaos in $L^1$ sense. 

\begin{defi}[Normalized Relative Entropy]\label{def RE}
    The normalized relative entropy between two probability measures on $\R^{dN}$, say  $F_N \text{ and } f_N \in \mathcal{P}(\R^{dN})$,  is defined by
    \begin{equation*}
        H_N(F_N|f_N)= \frac{1}{N} \int_{\R^{dN}} F_N \log \frac{F_N}{f_N} \ud V. 
    \end{equation*}
    We set the relative entropy quantity to be $+ \infty$ if $F_N$ is not absolutely continuous with respect to $f_N$. In this article, we take $F_N$ as solutions of the Liouville equation as defined before, and we take $f_N$ as 
    \begin{equation*}
        f_N(V):=f^{\otimes N} (v^1, \cdots, v^N) = \prod_{i=1}^N f(v^i), 
    \end{equation*}
    that is  the tensorized distribution of the solution to \eqref{equation}.
\end{defi}
Now we state our main result, where $|A|$ denotes the Frobenius norm of a matrix $A$.
\begin{thm}[Entropic Propagation of Chaos]\label{main}
For any $T>0$, given the classical solution $f \in \C^1([0,T],\C^2(\R^d))$ solving the Landau equation for Maxwellian molecules \eqref{equation} with non-negative initial data $f_0\in \C^2(\R^d)$ that satisfies the conservation laws  \eqref{conservation}. Assume further that there exists some positive constant $C_0$ such that $f_0$ satisfies the Gaussian-type bound from above that  \begin{equation}\label{inigauss}
        f_0(v) \leq C_0\exp(-C_0^{-1}|v|^2),
    \end{equation}
    and the logarithmic growth conditions
    \begin{equation}\label{inigradient}
        |\nabla \log f_0(v)|\lesssim1+|v|,
    \end{equation}
    \begin{equation}\label{inihessian}
        |\nabla^2 \log f_0(v)| \lesssim1+|v|^2.
    \end{equation}
Then for any weak solution of the Liouville  equation \eqref{master} with $F_N(0)=f^{\otimes N}_0$, we have the following estimate for $N$ large enough
$$
        H_N(F_N|f_N)(t) \leq \frac{C(1+T^{5/2})}{\sqrt{N}},
$$
where $C$ is a constant depending only on the initial bounds above and the initial 4-th order moment of the first marginal $F_{N,1}$.
\end{thm}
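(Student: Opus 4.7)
The plan is to evaluate $\frac{\ud}{\ud t} H_N(F_N|f_N)$ along the coupled evolution of the Liouville equation \eqref{master} for $F_N$ and the limiting equation \eqref{Landau} for $f$. From the tensorized identity $\p_t \log f_N = \sum_i \p_t \log f(v^i)$ combined with $\p_t \log f = (a*f):\nabla^2\log f + (a*f):\nabla\log f \otimes \nabla\log f + d(d-1)$ (which follows from \eqref{Landau1} using $\nabla\cdot a = b$ and $\nabla\cdot b = -d(d-1)$), together with standard integration by parts, one obtains an identity
\begin{equation*}
\frac{\ud}{\ud t} H_N(F_N|f_N) = -D_N + R_N,
\end{equation*}
where $D_N = \frac{1}{N}\sum_i \int F_N\,\bar{a}_i : \bigl(\nabla_{v^i}\log\tfrac{F_N}{f_N}\bigr)^{\otimes 2}\ud V \geq 0$ with $\bar{a}_i = \frac{1}{N}\sum_j a(v^i-v^j)$, and $R_N$ is a remainder measuring the discrepancy between the empirical kernels $\bar{a}_i,\bar{b}_i$ and their mean-field counterparts $(a*f)(v^i),(b*f)(v^i)$.

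\textbf{Splitting the remainder.} After absorbing the main bilinear cross piece of $R_N$ into $D_N$ via Young's inequality, the residual splits into three families of terms: a bilinear term of the form $\frac{1}{N}\sum_i \int F_N (\bar{a}_i - (a*f)(v^i)):(\nabla\log f(v^i))^{\otimes 2}\ud V$, a first-order term involving $(\bar{b}_i - (b*f)(v^i))\cdot \nabla\log f(v^i)$, and a second-order term involving $(\bar{a}_i - (a*f)(v^i)):\nabla^2\log f(v^i)$. The pointwise logarithmic bounds $|\nabla \log f(t,v)|\lesssim 1+|v|$ and $|\nabla^2 \log f(t,v)|\lesssim 1+|v|^2$ --- propagated from \eqref{inigradient}--\eqref{inihessian} through the nonlinear flow by a Bernstein-type computation on $|\nabla \log f|^2/(1+|v|^2)$ (resp.\ $|\nabla^2\log f|^2/(1+|v|^2)^2$) combined with the parabolic maximum principle --- are then indispensable: they reduce each residual to an estimate of an empirical fluctuation of $a$ or $b$ weighted against a polynomial factor in $|v^i|$.

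\textbf{Closing via Law of Large Numbers and Gronwall.} The next ingredient is a Law of Large Numbers tailored to \eqref{master}: for test functions of type $\psi(v,v_*) = \theta(v-v_*)(1+|v|)^p$ with $\theta \in \{a_{\alpha\beta},b_\alpha\}$, one establishes
\begin{equation*}
\int_{\R^{dN}} F_N\Bigl(\tfrac{1}{N}\textstyle\sum_j \psi(v^i,v^j)- (\psi(v^i,\cdot)*f)(v^i)\Bigr)^{2} \ud V \lesssim \frac{1}{N},
\end{equation*}
provided $F_N$ has propagated moments of sufficiently high order and $F_{N,3}(0)$ is approximately chaotic as in the remark above. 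The Gaussian decay of $f$ from \eqref{inigauss}, together with the moment propagation supplied by Proposition \ref{moment estimate} (and the $4$-th order moment of $F_{N,1}$ assumed in the statement), provides the required input. Assembling everything yields a differential inequality of the form $\frac{\ud}{\ud t} H_N(F_N|f_N) \leq C(1+t^{3/2})/\sqrt{N}$, whose integration delivers the announced $C(1+T^{5/2})/\sqrt{N}$ bound.

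\textbf{Main obstacle.} The chief difficulty is the unboundedness of $a$ and $b$, which grow quadratically and linearly respectively; this rules out the classical exponential-moment Jabin--Wang tactic and forces the coordinated use of (i) the Bernstein/maximum-principle pointwise bounds on $\nabla\log f$ and $\nabla^2\log f$, which convert the unbounded kernels into polynomially weighted test functions, and (ii) a particle-level moment propagation strong enough to make the resulting polynomially weighted LLN estimates uniform in time. Matching the growth rates of these two pillars carefully is where the clean $t^{5/2}$ dependence and the $1/\sqrt{N}$ rate are forged.
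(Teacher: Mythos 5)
Your overall architecture coincides with the paper's: an evolution identity for $H_N$, a remainder reduced by Cauchy--Schwarz to an empirical fluctuation of $a$ against $\nabla^2 f/f$, the Bernstein/maximum-principle bounds $|\nabla\log f|\lesssim 1+\sqrt t+|v|$ and $|\nabla^2\log f|\lesssim 1+t+|v|^2$, a particle-level Law of Large Numbers proved through moment propagation, and a time integration producing $(1+T^{5/2})/\sqrt N$. The genuine gap is the step ``absorbing the main bilinear cross piece of $R_N$ into $D_N$ via Young's inequality.'' The dissipation produced by the Liouville equation is $D_N=\frac1N\sum_i\int F_N\,\bar a_i:\big(\nabla_{v^i}\log\frac{F_N}{f_N}\big)^{\otimes2}\ud V$ with the \emph{empirical} matrix $\bar a_i=\frac1N\sum_j a(v^i-v^j)$, which is only positive semidefinite and has no uniform lower bound (it degenerates, e.g., when the relative velocities align); Young's inequality can only absorb a cross term whose matrix factor is dominated by $\bar a_i$, and the fluctuation $a\ast f(v^i)-\bar a_i$ is not. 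Moreover, even if such an absorption were possible it would leave a residual \emph{quadratic} in the fluctuation, weighted by the (possibly singular) inverse of $\bar a_i$, not the terms linear in $\bar a_i-a\ast f(v^i)$ and $\bar b_i-b\ast f(v^i)$ that you list; so the passage from your $R_N$ to your three residual families is not justified as written.

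The repair --- and the paper's actual route --- is that no absorption is needed: because $b=\nabla\cdot a$ and $\nabla^2\!:\!a\equiv -d(d-1)$ enter identically in the particle system and in the limit equation, integrating by parts in the cross terms makes every term containing $\nabla\log\frac{F_N}{f_N}$ cancel exactly, leaving precisely $-\frac1N\sum_i\int F_N\,\big[a\ast f(v^i)-\bar a_i\big]:\frac{\nabla^2 f(v^i)}{f(v^i)}\ud V$ (Proposition \ref{evolution prop}), which is linear in the fluctuation and is then handled by Cauchy--Schwarz, Theorem \ref{LLN} and Theorems \ref{loggradient}--\ref{loghessian}. One further caution: the Law of Large Numbers you invoke is not a generic statement, since $F_N$ is not tensorized and a $1/N$ rate for the variance-type quantity requires control of three-particle correlations; the paper proves it by expanding $a\ast f(v^i)-a(v^i-v^j)$ explicitly (Maxwellian structure) and showing, via the weak form of the Liouville equation, the cancellation identity \eqref{weak identity} for the test functions $v_\alpha,|v|^2$, and Gr\"onwall arguments on directional temperatures and cross moments, that the relevant particle correlations stay $O(1/N)$-close to the limiting moments (with a factor growing like $1+T$, not uniformly in time). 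This mechanism is exactly what your sketch leaves unproved, and it is where the conservation laws of the particle system are essential.
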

\begin{rmk}\label{classical solution} The well-posedness and regularity of the Landau-Maxwellian equation have been established in Villani \cite{villani1998spatially}. If $f_0$ is non-negative with finite mass and energy,  then  \eqref{equation} admits a unique classic solution which is bounded and in $C^\infty(\R^d)$ for all time $t>0$. 
\end{rmk}

Due to the sub-additivity of the relative entropy, we obtain that
\begin{equation*}
    H_k(F_{N,k}|f^{\otimes k}) := \frac{1}{k} \int_{\R^{dk}} F_{N,k}\log \frac{F_{N,k}}{f^{\otimes k}} \ud v^1 ... \ud v^k \leq H_N(F_N|f_N),
\end{equation*}
which in turn implies the strong $L^1$ propagation of chaos as follows from the classical Csisz\'ar-Kullback-Pinsker inequality, namely
$$
\|F_{N,k}-f^{\otimes k}\|_{L^1}\leq\sqrt{2kH_k(F_{N,k}|f^{\otimes k})}.
$$

\begin{cor}[Propagation of Chaos in $L^1$]\label{maincor}
    Under the same assumptions of Theorem \ref{main},  the propagation of chaos in $L^1$  holds with the convergence rate as
\begin{equation*}
        \Vert F_{N,k} -f^{\otimes k} \Vert_{L^\infty([0,T],L^1(\mathbb{R}^{dk}))} \leq \frac{C(1+T^{5/4})}{N^{1/4}}.
    \end{equation*}
\end{cor}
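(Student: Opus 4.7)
The plan is simply to chain the three ingredients that the text already displays immediately before the statement: sub-additivity of the normalized relative entropy, the Csisz\'ar-Kullback-Pinsker inequality, and the bound of Theorem \ref{main}. Since all of these are either already quoted or standard, no new analytical work is required; the only task is bookkeeping the powers of $N$ and $T$.

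Concretely, I would first use sub-additivity to write $H_k(F_{N,k}|f^{\otimes k}) \leq H_N(F_N|f_N)$ uniformly in $t \in [0,T]$. Next, I would apply the Csisz\'ar-Kullback-Pinsker inequality at the $k$-marginal level to get $\|F_{N,k}-f^{\otimes k}\|_{L^1} \leq \sqrt{2k\, H_k(F_{N,k}|f^{\otimes k})}$, and then insert Theorem \ref{main} to obtain
\begin{equation*}
\|F_{N,k}(t)-f^{\otimes k}(t)\|_{L^1(\R^{dk})} \leq \sqrt{\frac{2kC(1+T^{5/2})}{\sqrt{N}}}
\end{equation*}
for every $t \in [0,T]$.

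Finally, I would simplify the right-hand side via the elementary inequality $\sqrt{1+T^{5/2}} \leq \sqrt{2}\,(1+T^{5/4})$ and absorb the factor $\sqrt{2kC}$ (with $k$ fixed) into a new constant $C$, which yields the desired rate $C(1+T^{5/4})/N^{1/4}$ after taking the supremum in $t$. Since both intermediate inequalities are dimension-free and deterministic, there is no genuine obstacle here; the corollary is really a post-processing of Theorem \ref{main}, and the only point worth flagging is that the $k$-dependence is hidden in the constant because $k$ is fixed as $N \to \infty$.
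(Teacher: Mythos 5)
Your proposal is correct and is exactly the argument the paper intends: the two displayed inequalities preceding the corollary (sub-additivity of the normalized relative entropy and the Csisz\'ar--Kullback--Pinsker inequality) combined with Theorem \ref{main} give the bound, and the $\sqrt{1+T^{5/2}}\lesssim 1+T^{5/4}$ simplification is the same bookkeeping step. No discrepancy.
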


\begin{rmk}
   Our main results, Theorem \ref{main} and Corollary \ref{maincor}, can be compared to the result by Fournier in \cite{fournier2009particle}, which established the propagation of chaos in the sense of the Wasserstein-$2$ distance between the $k$-marginals $F_{N, k}(t)$ and the tensorized law $f_t^{\otimes k}$, that is,
\begin{equation}\label{FWEs}
\sup_{t\in[0,T]} W_2(F_{N,k},f_t^{\otimes k})\leq \frac{C_T}{N^{1/2}}, 
\end{equation}
where the constant $C_T$ depends on $T$ exponentially. Combining our relative entropy estimate and the Talagrand's inequality, see for instance in the book by Villani \cite{villani2021topics}, we can also obtain a convergence rate estimate in the Wasserstein-$2$ distance, but suboptimal compared to Fournier's result. On the other hand, if we replace our new argument in the flavor of the Law of Large Numbers in Section \ref{poc} simply by \eqref{FWEs}, we can also obtain our relative entropy estimate in the same order in $N$, but then the dependence on $T$ will be less optimal compared to our result.  
\end{rmk}

\subsection{Related Works}

The well-posedness and regularity results of the Landau equation have been deeply investigated, where we refer to Villani \cite{villani1998new,villani1998spatially} for physical and mathematical details of the equation and for the Maxwellian molecules case. In terms of hard potentials, the well-posedness, regularity and
long-time behavior have been studied by Desvillettes-Villani \cite{desvillettes2000spatially1,desvillettes2000spatially2} and Fournier-Heydecker \cite{fournier2021stability}. 
For moderately soft potentials, the global well-posedness result is obtained in Fournier-Guérin \cite{fournier2009well}; for very soft potentials. Villani \cite{villani1998new} defines the notion of $H$-solution and proves its existence, but the regularity and uniqueness of $H$-solutions remain open. Desvillettes \cite{desvillettes2015entropy} has proved that the $H$-solutions are actually weak solutions through an entropy dissipation estimate. Recently Golding-Loher \cite{golding2024local} proves the local existence, uniqueness and stability results for smooth solutions with $L^p$ initial data, where $p$ is arbitrarily close to $3/2$.  Most recently Guillen-Silvestre \cite{guillen2023landau} has shown that the classical solution of the Landau equation for Coulombian potential does not blow up. Combining with this argument, Golding-Gualdani-Loher \cite{golding2024global} has proved the global existence of bounded smooth solutions.

The derivation of the Landau equation from particle systems is interesting and challenging.  The first quantitative propagation of chaos result for the Landau equation was given by Fontbona-Gu\'erin-M\'el\'eard \cite{fontbona2009measurability} using optimal transportation techniques. Later in Fournier \cite{fournier2009particle} a better result in the decay rate of Wasserstein-2 distance was obtained via the classical coupling method. Both results work on the Landau-Maxwellian equation. Fournier-Hauray \cite{fournier2016propagation} was also able to obtain the quantitative result of the same particle system for the Landau equation with moderately soft potential using a weak-strong argument and the classical weak martingale approach introduced firstly by McKean. Carrapatoso \cite{carrapatoso2012propagation} firstly constructed the Kac-type particle system via passing the Kac's walk process to the grazing limit to approximate the Landau-Maxwellian equation, with a quite slowly decaying rate, using the abstract analytic method developed in  \cite{mischler2013kac}. This result was improved and extended to the hard potential case by Fournier-Guillin \cite{fournier2017kac}.

The relative entropy method to prove quantitative propagation of chaos result for McKean-Vlasov systems was first introduced by Jabin-Wang in \cite{jabin2016mean} for second-order systems with bounded kernels and in \cite{jabin2018quantitative} for general first-order systems with $W^{-1,\infty}$ kernels, including the point vortex model approximating the 2D Navier-Stokes equation on the torus.
Recently much progress has been made in extending the relative entropy method to more general cases and models, especially with singular interacting kernels. Those results include Wynter \cite{wynter2021quantitative} for the mixed-sign point vortex model, Guillin-Le Bris-Monmarch\'e \cite{guillin2024uniform} for pushing forward the results in \cite{jabin2018quantitative} to the uniform-in-time propagation of chaos by using the logarithmic Sobolev inequality (LSI) for the limit density, Shao-Zhao \cite{shao2024quantitative} for the general circulation case with common environmental noise approximating the stochastic 2D Navier-Stokes equation, and Carrillo-Guo-Jabin \cite{carrillo2024mean} for the derivation of the mean-field approximation for Landau-like equations, all of which are working on the torus case. In the recent work Feng-Wang \cite{feng2023quantitative}, the propagation of chaos result for 2D point vortex models in \cite{jabin2018quantitative} has been extended to the whole space case, by carefully investigating regularity results of the limit density, say growth estimates of $\nabla \log \bar\rho$ and $\nabla^2 \log \bar\rho$, once given some growth estimates on the same quantities of the initial data $\bar \rho_0$. These estimates are expected to be universal in many models with some Gaussian type equilibrium. We also remark that those assumptions on the initial data are reasonable, since they will automatically be recovered at any positive time given initial data with Gaussian decay. We further mention the work Huang \cite{huang2023entropy} which shows that for weakly interacting diffusions of the McKean-Vlasov type with bounded interaction kernels,  given weak chaotic initial data  for instance in the Wasserstein metric sense, one can obtain entropic propagation of chaos for $t >0$.  

Recently the modulated energy method introduced by Serfaty \cite{serfaty2020mean} has also seen great effectiveness in proving mean-field limit and propagation of chaos of McKean-Vlasov equations with singular interacting kernels. Instead of focusing on the Liouville equation level as in the relative entropy method, this modulated energy method works on the empirical measure of the particle system. By introducing and estimating a Coulomb/Riesz-based metric between the empirical measure and the limit density, which can be viewed as a renormalized negative-order Sobolev norm and is called the modulated energy, \cite{serfaty2020mean} arrives at proving the quantitative convergence rate of the empirical measure, acting on some test function, for the Coulomb and super-Coulomb case without additive noise in the whole Euclidean space. Later the method was extended to Riesz-type singular flows by Nguyen-Rosenzweig-Serfaty \cite{nguyen2022mean} and global-in-time result by Rosenzweig-Serfaty \cite{rosenzweig2023global}. Combining the modulated energy with the relative entropy multiplying with the viscosity, Bresch-Jabin-Wang \cite{bresch2019mean,bresch2019modulated,bresch2023mean} developed the modulated free energy method to deal with the attractive singular kernel case with additive noise, in particular the 2D Patlak-Keller-Segel model. De Courcel-Rosenzweig-Serfaty \cite{de2023sharp,de2023attractive} also extended the modulated free energy method to the periodic Riesz-type flow and the attractive log gas, proving uniform-in-time propagation of chaos results.

We also mention the recent work of Lacker \cite{lacker2021hierarchies}, where the local relative entropy of any order, say $H_k (F_{N, k}(t) \vert f_t^{\otimes k})$, is considered instead of the global quantity $H_N(F_N (t)\vert f_t^{\otimes N})$. Using the BBGKY hierarchy and some careful iteration, Lacker established the optimal convergence rate of the relative entropy between the $k-$marginals and the tensorized law for weakly interacting diffusions with bounded interaction kernels. Combining with the analysis in \cite{jabin2018quantitative}, Wang \cite{wang2024sharp} has extended the sharp convergence rate in $N$ to the 2D Navier-Stokes case on torus, but restricted to the high viscosity case. Bresch-Jabin-Soler \cite{bresch2022new} also works on the BBGKY hierarchy, combining with the compactness argument, to derive the mean-field limit of the second-order singular interacting Vlasov-Fokker-Planck system, in particular the 2D Coulomb case. Finally, we also mention the new approach in Bresch-Duerinckx-Jabin \cite{breschduerjab} that also relies on estimates on a hierarchy, based this time on a new notion of dual cumulants.

\subsection{Outline of the Article}

The rest of this article is organized as follows. In Section \ref{evolution} we derive the evolution of the relative entropy, and prove the main theorem by taking advantage of the logarithmic gradient and Hessian estimate and a new Law of Large Numbers theorem, which will be proved in later sections. We give the analysis of the Landau-Maxwellian collision operator and provide some discussions on the parabolic maximum principle in Section \ref{formulation}. Then we present our main regularity estimates, namely the logarithmic gradient and Hessian estimates in Section \ref{estimate}.  Section \ref{poc} is devoted to the proof of the Law of Large Numbers result. We leave the moment estimate of the Liouville equation in Appendix \ref{appendix}.  

\section{Proof of the Main Result}\label{evolution}
In this section, we will first prove our main result, i.e. the entropic propagation of chaos result Theorem \ref{main}, while admitting some intermediate results. It is given firstly by deriving the evolution of the normalized relative entropy, and then exploiting the Law of Large Numbers Theorem \ref{LLN} and logarithmic gradient and Hessian estimate Theorem \ref{loggradient} and Theorem \ref{loghessian}, to control the time derivative of the relative entropy. Integrating in time will conclude our entropy estimate result.
\begin{prop}[Evolution of the Relative Entropy]\label{evolution prop}
For any $t\in[0,T]$, it holds that
\begin{equation}\label{evolution RE}
\begin{aligned}
  H_N(F_N|f_N)(t)
  &\leq  H_N(F_N|f_N)(0)\\&-\frac{1}{N}\sum_{i=1}^{N}\int_{0}^{t}\int_{\R^{dN}} F_N\Big[a\ast f(v^{i})-\frac{1}{N}\sum_{j=1}^{N}a(v^{i}-v^{j})
\Big]: \frac{\nabla^2_{v^i}f_N}{f_N}\ud V\ud s.
\end{aligned}
\end{equation}  
\end{prop}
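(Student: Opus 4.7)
The plan is to differentiate $H_N(F_N|f_N)$ in time and reorganize the resulting terms by completing a square, so that the non-negative ``relative Fisher information'' piece can be discarded, leaving exactly the right-hand side of \eqref{evolution RE} after integration in time. Writing $NH_N = \int F_N\log F_N - \int F_N\log f_N$, the first derivative is already given by the a priori identity \eqref{entropyapriori}, while for the second I will use $\log f_N = \sum_i \log f(v^i)$ together with the non-divergence form \eqref{Landau1}: dividing \eqref{Landau} by $f$ and using $c*f = -d(d-1)$ (from mass conservation) yields the pointwise identity $\partial_t\log f = (a*f):\frac{\nabla^2 f}{f}+d(d-1)$. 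Combining this with the integration-by-parts expression for $\int \partial_t F_N \log f_N$, and observing that $\nabla_{v^i}\log f_N = \nabla\log f(v^i)$ and $\frac{\nabla_{v^i}^2 f_N}{f_N} = \frac{\nabla^2 f(v^i)}{f(v^i)}$, reduces the problem to comparing the two resulting expressions after cancellation of the $Nd(d-1)$ terms.

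Next, I would organize the cross terms by introducing $M_i := \frac{1}{N}\sum_j a(v^i-v^j)$, $B_i := \frac{1}{N}\sum_j b(v^i-v^j)$, $Y_i := \nabla\log f(v^i)$, and $Z_i := \nabla_{v^i}\log(F_N/f_N)$. With these notations, the combination from step one takes the form
\begin{equation*}
\tfrac{d}{dt}(NH_N) = -\sum_i\!\int F_N M_i\!:\!(\nabla_{v^i}\log F_N)^{\otimes 2} + \sum_i\!\int F_N M_i\!:\!\nabla_{v^i}\log F_N\otimes Y_i - \sum_i\!\int F_N B_i\cdot Y_i - \sum_i\!\int F_N (a*f)(v^i)\!:\!\tfrac{\nabla^2 f(v^i)}{f(v^i)}.
\end{equation*}
Using $\nabla_{v^i}\log F_N = Z_i + Y_i$ the first two terms collapse to $-\sum_i\int F_N M_i:Z_i^{\otimes 2} - \sum_i\int F_N M_i:Y_i\otimes Z_i$, which is the standard square completion.

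The key manipulation is to eliminate the remaining linear-in-$Z_i$ term. I would rewrite $F_N Z_i = \nabla_{v^i}F_N - F_N Y_i$ and integrate by parts in $v^i$; using the matrix identities $\nabla_{v^i}\cdot M_i = B_i$ (from $\nabla\cdot a = b$) and $\nabla Y_i = \nabla^2\log f(v^i)$, together with $\frac{\nabla^2 f}{f} = \nabla^2\log f + Y\otimes Y$, one finds that the $-\sum_i\int F_N B_i\cdot Y_i$ contribution is cancelled exactly, and what survives is
\begin{equation*}
\tfrac{d}{dt}(NH_N) = -\sum_{i=1}^N\int F_N M_i\!:\!Z_i^{\otimes 2}\,\ud V - \sum_{i=1}^N\int F_N \bigl[(a*f)(v^i)-M_i\bigr]\!:\!\tfrac{\nabla_{v^i}^2 f_N}{f_N}\,\ud V.
\end{equation*}

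Since $a(z)$ is symmetric and non-negative, the first sum is non-positive and can be discarded. Dividing by $N$ and integrating from $0$ to $t$ yields exactly \eqref{evolution RE}. I do not foresee any serious obstacle here; the only delicate point is the algebraic reorganization that combines the cross term coming from $\int \partial_t F_N \log f_N$ with the drift contribution $-\sum_i \int F_N B_i\cdot Y_i$ in a way that reconstructs the full Hessian $\frac{\nabla^2 f(v^i)}{f(v^i)}$. This step requires carefully exploiting the symmetry of $a$ and the relation $b=\nabla\cdot a$, but is otherwise a direct computation. The rigorous justification of the integration by parts is standard under the regularity supplied by Proposition~\ref{weaksolution} and Remark~\ref{classical solution}.
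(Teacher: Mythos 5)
Your proposal is correct: the computation you outline closes, and I checked the key steps — the pointwise identity $\partial_t \log f=(a\ast f):\frac{\nabla^2 f}{f}+d(d-1)$ (valid since $c\equiv -d(d-1)$ and $\int f=1$), the collapse of the quadratic terms after writing $\nabla_{v^i}\log F_N=Z_i+Y_i$, and the single integration by parts of $-\sum_i\int F_N\, Z_i\cdot M_iY_i$ using $\nabla_{v^i}\cdot M_i=B_i$, $\nabla Y_i=\nabla^2\log f(v^i)$ and $\frac{\nabla^2 f}{f}=\nabla^2\log f+Y\otimes Y$, which indeed cancels $-\sum_i\int F_N B_i\cdot Y_i$ exactly and leaves the identity you state; dropping the non-negative term $\sum_i\int F_N M_i:Z_i^{\otimes 2}$ and integrating in time gives \eqref{evolution RE}. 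The overall strategy is the same as the paper's (evolution of $H_N$, exhibit the empirical Fisher-type term, identify the remainder as the $[a\ast f(v^i)-\frac1N\sum_j a(v^i-v^j)]:\frac{\nabla^2_{v^i}f_N}{f_N}$ error), but your bookkeeping differs in a worthwhile way: the paper keeps both equations in divergence form, organizes the derivative into $I_1+I_2+I_3$ with the difference kernels $[a\ast f-M_i]$, $[b\ast f-B_i]$ appearing from the start, and eliminates an intermediate term containing $\nabla^2_{v^i}F_N$ by integrating by parts twice and using that $\nabla^2:a$ is the constant $-d(d-1)$, so the two second divergences cancel; you instead feed the Maxwellian structure in through $\partial_t\log f$ (non-divergence form) and the a priori identity \eqref{entropyapriori}, so the Hessian of $F_N$ never appears and only one integration by parts against $\nabla_{v^i}F_N$ is needed — a marginally lighter manipulation at the level of the weak solution $F_N$, whereas the paper's grouping makes the "mean-field difference" structure of every term visible earlier. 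Both routes discard the identical non-negative term and land on the same inequality, so the difference is organizational rather than substantive.
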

\begin{proof}
  Recall from Definition \ref{def RE} that
$$
H_N(F_N|f_N)=\frac{1}{N} \int_{\R^{dN}} F_N \log F_N \ud V-\frac{1}{N} \int_{\R^{dN}} F_N \log f_N \ud V,
$$
and the tensorized limit equation \eqref{Landau} satisfies
\begin{equation}
   \p_t f_N=\sum_{i=1}^N\nabla_{v^i} \cdot \big[a\ast f(v^i) \cdot \nabla_{v^i} f_N-b\ast f(v^i)f_N\big]. 
\end{equation}
We calculate the time derivative of the relative entropy as
$$
\begin{aligned}
\frac{\ud}{\ud t}\left(\frac{1}{N}\int_{\R^{dN}}F_N\log F_N\ud V\right)=&\,\frac{1}{N}\int_{\R^{dN}}(1+\log F_N)\p_t F_N\ud V
\\=&-\frac{1}{N}\sum_{i=1}^{N}\int_{\R^{dN}}\frac{1}{N}\sum_{j=1}^{N}a(v^{i}-v^{j}):\frac{\nabla_{v^{i}}F_N\otimes\nabla_{v^{i}}F_N}{F_N}\ud V
\\&+\frac{1}{N}\sum_{i=1}^{N}\int_{\R^{dN}}\frac{1}{N}\sum_{j=1}^{N}b(v^{i}-v^{j})\cdot\nabla_{v^{i}}F_N\ud V,
\end{aligned}
$$
where we plug in \eqref{master} and integrate by parts in the second step; similarly, we have
$$
\begin{aligned}
&\frac{\ud}{\ud t}\left(\frac{1}{N}\right.\left.\!\!\int_{\R^{dN}}F_N\log f_N\ud V\right)
=\frac{1}{N}\int_{\R^{dN}}\Big(\log f_N \p_t F_N + F_N \frac{ \p_t f_N}{f_N}\Big)\ud V
\\=&\,-\frac{1}{N}\sum_{i=1}^{N}\int_{\R^{dN}}\!\!\frac{1}{N}\sum_{j=1}^{N}a(v^{i}-v^{j}):
\frac{\nabla_{v^{i}}f_N\otimes\nabla_{v^{i}}F_N}{f_N} \ud V\\&\,+\frac{1}{N}\sum_{i=1}^{N}\int_{\R^{dN}}\!\!\frac{1}{N}\sum_{j=1}^{N}b(v^{i}-v^{j})\cdot\frac{\nabla_{v^{i}}f_N}{f_N}F_N\ud V
\\&\,-\frac{1}{N}\sum_{i=1}^{N}\int_{\R^{dN}} a\ast f(v^{i}) :\nabla_{v^{i}}\frac{F_N}{f_N}\otimes\nabla_{v^{i}}f_N\ud V+\frac{1}{N}\sum_{i=1}^{N}\int_{\R^{dN}}b\ast f(v^{i})\cdot \nabla_{v^{i}}
\frac{ F_N }{f_N} f_N\ud V.
\end{aligned}
$$
Using the identity 
$$
\nabla_{v^i} \frac{F_N}{f_N}=\frac{f_N \nabla_{v^i} F_N-F_N \nabla_{v^i} f_N}{f_N^2},
$$
we are able to rewrite 
\begin{equation}\label{RE}
\begin{aligned}
&\,\frac{\ud}{\ud t}H_N(F_N|f_N)(t) 
\\=&\, -\frac{1}{N}\sum_{i=1}^{N}\int_{\R^{dN}}F_N\frac{1}{N}\sum_{j=1}^{N}a(v^{i}-v^{j}):\nabla_{v^{i}}\log\frac{F_N}{f_N}\otimes\nabla_{v^{i}}\log\frac{F_N}{f_N}\ud V\\
 &\, +\frac{1}{N}\sum_{i=1}^{N}\int_{\R^{dN}} F_N\Big[a\ast f(v^{i})-\frac{1}{N}\sum_{j=1}^{N}a(v^{i}-v^{j})
\Big]:\nabla_{v^{i}}\log\frac{F_N}{f_N}\otimes\nabla_{v^{i}}\log f_N\ud V
\\ &\, -\frac{1}{N}\sum_{i=1}^{N}\int_{\R^{dN}}F_N\Big[b\ast f(v^{i})-\frac{1}{N}\sum_{j=1}^{N}b(v^{i}-v^{j})\Big]\cdot \nabla_{v^{i}}\log
\frac{ F_N }{f_N} \ud V\\
=&:\, I_1+I_2+I_3.
\end{aligned}
\end{equation}

For the first term $I_1$, due to the idempotency of the projection matrix, i.e.  $\Pi^2=\Pi$, we have 
$$
\begin{aligned}
I_1=&-\frac{1}{N^2}\sum_{i,j=1}^{N}\int_{\R^{dN}}F_N|v^{i}-v^{j}|^{2}\big(\Id-\frac{(v^{i}-v^{j})\otimes(v^{i}-v^{j})}{|v^{i}-v^{j}|^2}\big):\Big(\nabla_{v^{i}}\log\frac{F_N}{f_N}\Big)^{\otimes2}\ud V\\=&-\frac{1}{N^2}\sum_{i=1}^{N}\sum_{j=1}^{N}\int_{\R^{dN}}F_N|v^{i}-v^{j}|^{2}\Big|\big(\Id-\frac{(v^{i}-v^{j})\otimes(v^{i}-v^{j})}{|v^{i}-v^{j}|^2}\big)\nabla_{v^{i}}\log\frac{F_N}{f_N}\Big|^2\ud V\leq 0,\\
\end{aligned}
$$
which is non-positive. For $I_3$, it holds from integrating by parts that
$$
\begin{aligned}
I_3=&-\frac{1}{N}\sum_{i=1}^{N}\int_{\R^{dN}}F_N\Big[b\ast f(v^{i})-\frac{1}{N}\sum_{j=1}^{N}b(v^{i}-v^{j})\Big]\cdot \nabla_{v^{i}}\log
\frac{ F_N }{f_N} \ud V \\ =&   \frac{1}{N}\sum_{i=1}^{N}\int_{\R^{dN}}\Big[a\ast f(v^{i})-\frac{1}{N}\sum_{j=1}^{N}a(v^{i}-v^{j})\Big]:\nabla_{v^i}\Big[F_N \nabla_{v^{i}}\log
\frac{ F_N }{f_N} \Big]\ud V\\ =&   \frac{1}{N}\sum_{i=1}^{N}\int_{\R^{dN}}F_N\Big[a\ast f(v^{i})-\frac{1}{N}\sum_{j=1}^{N}a(v^{i}-v^{j})\Big]:\nabla_{v^i}\log F_N \otimes\nabla_{v^{i}}\log
\frac{ F_N }{f_N} \ud V\\ &+   \frac{1}{N}\sum_{i=1}^{N}\int_{\R^{dN}}F_N\Big[a\ast f(v^{i})-\frac{1}{N}\sum_{j=1}^{N}a(v^{i}-v^{j})\Big]:\nabla_{v^{i}}^2\log
\frac{ F_N }{f_N} \ud V.
\end{aligned}
$$
The following trivial identity holds:
$$
\nabla_{v^{i}}^2\log
\frac{ F_N }{f_N}=\frac{\nabla_{v^{i}}^2F_N}{F_N}-\frac{\nabla_{v^{i}}^2f_N}{f_N}-\nabla_{v^{i}}\log\frac{F_N}{f_N}\otimes \nabla_{v^{i}}\log (F_Nf_N),
$$
which yields
$$
\begin{aligned}
I_2+I_3=&  \frac{1}{N}\sum_{i=1}^{N}\int_{\R^{dN}}F_N\Big[a\ast f(v^{i})-\frac{1}{N}\sum_{j=1}^{N}a(v^{i}-v^{j})\Big]:\Big(\frac{\nabla_{v^{i}}^2F_N}{F_N}-\frac{\nabla_{v^{i}}^2f_N}{f_N}\Big) \ud V  \\
=& \frac{1}{N}\sum_{i=1}^{N}\int_{\R^{dN}}\Big[a\ast f(v^{i})-\frac{1}{N}\sum_{j=1}^{N}a(v^{i}-v^{j})\Big]:\nabla_{v^{i}}^2F_N \ud V \\&-
\frac{1}{N}\sum_{i=1}^{N}\int_{\R^{dN}}F_N\Big[a\ast f(v^{i})-\frac{1}{N}\sum_{j=1}^{N}a(v^{i}-v^{j})\Big]:\frac{\nabla_{v^{i}}^2f_N}{f_N} \ud V
\\=&
-\frac{1}{N}\sum_{i=1}^{N}\int_{\R^{dN}}F_N\Big[a\ast f(v^{i})-\frac{1}{N}\sum_{j=1}^{N}a(v^{i}-v^{j})\Big]:\frac{\nabla_{v^{i}}^2f_N}{f_N} \ud V,
\end{aligned}
$$
where the last step is obtained by integrating by parts twice and using that $\nabla^2:a=-d(d-1)$ in the Landau-Maxwellian case. Proposition \ref{evolution prop} is obtained by integrating with respect to time.
\end{proof}

In order to control the last term in Proposition \ref{evolution prop}, one may recall the Law of Large Numbers and the Large Deviation type results obtained in \cite[Theorem 3, Theorem 4]{jabin2018quantitative}. As observed in \cite{feng2023quantitative}, it will work once the integrand satisfies a quadratic growth estimate. However, due to the unboundedness of the coefficient matrix $a(z)$, it is not the case this time. Hence we shall apply the Cauchy-Schwarz inequality to the three-index variables as $$
\sum_{i=1}^N\sum_{\alpha,\beta=1}^d x_{i\alpha\beta} y_{i\alpha\beta}\leq\Big(\sum_{i=1}^N\sum_{\alpha,\beta=1}^d x_{i\alpha\beta}^2\Big)^{\frac{1}{2}}\Big(\sum_{i=1}^N\sum_{\alpha,\beta=1}^d y_{i\alpha\beta}^2\Big)^{\frac{1}{2}}
$$
to the integrand  of \eqref{evolution RE}, which yields that 
\begin{equation}\label{Holder}
\begin{aligned}
&\left|\frac{1}{N}\sum_{i=1}^N\int_{\R^{dN}} F_N\Big[a\ast f(v^{i})-\frac{1}{N}\sum_{j=1}^{N}a(v^{i}-v^{j})
\Big]: \frac{\nabla^2_{v^i}f_N}{f_N}\ud V\right|\\
\leq &\int_{\R^{dN}} F_N\Big(\frac{1}{N}\sum_{i=1}^{N}\Big|a\ast f(v^{i})-\frac{1}{N}\sum_{j=1}^{N}a(v^{i}-v^{j})
\Big|^2\Big)^{\frac{1}{2}}\Big(\frac{1}{N}\sum_{i=1}^{N}\Big|\frac{\nabla^2 f}{f}(v^i)\Big|^2\Big)^{\frac{1}{2}}\ud V\\
\leq &\bigg(\int_{\R^{dN}} F_N\frac{1}{N}\sum_{i=1}^{N}\Big|a\ast f(v^{i})-\frac{1}{N}\sum_{j=1}^{N}a(v^{i}-v^{j})
\Big|^2\bigg)^{\frac{1}{2}}\bigg(\int_{\R^{dN}} F_N\frac{1}{N}\sum_{i=1}^{N}\Big|\frac{\nabla^2 f}{f}(v^i)\Big|^2\bigg)^{\frac{1}{2}},
\end{aligned}
\end{equation}
where the last inequality is due to the H\"{o}lder's inequality. 

Now we can treat the two parts in the last line separately. For the first part, we observe that the quantity inside the square term has mean zero if the joint law of $(v^1,...,v^N)$ equals to the tensorized law $f_N$. This will be shown by the following Law of Large Numbers result, which can be viewed as an extension to unbounded kernel case of the Law of Large Numbers type result \cite[Theorem 3]{jabin2018quantitative}.

\begin{thm}[Law of Large Numbers]\label{LLN}
Assume that $T\sim O(N)$, we have the following Law of Large Numbers estimate:
\begin{equation}\label{cancel estimate}
\int_{\R^{dN}} F_N\frac{1}{N}\sum_{i=1}^{N}\Big|a\ast f(v^{i})-\frac{1}{N}\sum_{j=1}^{N}a(v^{i}-v^{j})
\Big|^2\ud V \lesssim \frac{(1+\mm_4(0))(1+T)}{N}.
\end{equation}
\end{thm}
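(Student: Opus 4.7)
The plan is to combine an exact algebraic identity for the fluctuation with a dynamical argument exploiting the Maxwellian structure. By exchangeability of $F_N$, the left-hand side of \eqref{cancel estimate} equals $\int F_N|Y^1|^2 \ud V$ with $Y^1:=A(v^1)-\frac{1}{N}\sum_j a(v^1-v^j)$ and $A:=a\ast f$. Writing $Y^1=\frac{1}{N}\sum_j[A(v^1)-a(v^1-v^j)]$, I would expand the Frobenius square and apply exchangeability to the resulting pairwise sum (using $a(0)=0$) to reduce the quantity to a linear combination of the four moments $I_1=\int|A|^2 F_{N,1}$, $I_2=\int A(v):a(v-w)F_{N,2}$, $I_3=\int|a(v-w)|^2 F_{N,2}$, and $I_4=\int a(v-w):a(v-w')F_{N,3}$. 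Substituting $I_k=I_1+(I_k-I_1)$ for $k=2,3,4$, the coefficient of $I_1$ collapses to $(1-(N-1)/N)^2=1/N^2$, producing the key identity
\begin{equation*}
\int F_N|Y^1|^2\,\ud V = \frac{I_1}{N^2} - \frac{2(N-1)}{N}(I_2-I_1) + \frac{N-1}{N^2}(I_3-I_1) + \frac{(N-1)(N-2)}{N^2}(I_4-I_1),
\end{equation*}
in which the fluctuations $P:=I_2-I_1$ and $R:=I_4-I_1$ vanish identically when $F_{N,k}$ is tensorized.

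Since $|a(z)|^2=(d-1)|z|^4$ and $|A(v)|^2\lesssim 1+|v|^4$ by the conservation laws \eqref{conservation}, the terms $I_1/N^2$ and $(N-1)(I_3-I_1)/N^2$ are directly bounded by $C(1+\mm_4(F_{N,1}(t)))/N$; the fourth-moment propagation of Proposition \ref{moment estimate} then gives the sharper bound $C(1+\mm_4(0))(1+t)/N$, at the order announced. The genuine difficulty is to control $P$ and $R$, which are $O(1)$ in $N$ a priori since they depend on the unknown two- and three-particle correlations of $F_N$.

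Rather than estimating $P$ and $R$ separately, I would differentiate the whole expectation in $t$. Since $|Y_t^1|^2$ is a polynomial of degree four in $(v^1,\ldots,v^N)$ with smooth time-dependent coefficients through $A_t=a\ast f_t$, and since for Maxwellian molecules $\nabla^2 a$ is a constant tensor and $b(z)=-(d-1)z$ is linear, the Liouville equation \eqref{master} together with the Landau equation \eqref{Landau} (used to express $\partial_t A_t=a\ast Q(f,f)$ as a polynomial in $v$) yields an explicit expression for $\frac{d}{dt}\int F_N(t)|Y_t^1|^2\,\ud V$ as a polynomial moment of $F_N(t)$. After collecting terms, the same algebraic cancellation that produced the $1/N^2$ coefficient in the static identity should reduce this derivative to $\lesssim(1+\mm_4(F_{N,1}(t)))/N$. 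Since $F_N(0)=f_0^{\otimes N}$ forces $P(0)=R(0)=0$, the initial value of $\int F_N|Y^1|^2\,\ud V$ is already $\lesssim(1+\mm_4(0))/N$, and integrating in time together with moment propagation then delivers the announced bound $C(1+\mm_4(0))(1+T)/N$. The hard part will be precisely this evolution calculation: one must carefully track every contribution from $\partial_t A_t$ and from the Liouville generator acting on the degree-four polynomial $|Y_t^1|^2$ and verify that the cross-terms telescope through the same cancellation, leaving no residual $O(1)$-in-$N$ pieces; the technical assumption $T\sim O(N)$ enters only to keep Grönwall-type factors from moment propagation benign over the integration window.
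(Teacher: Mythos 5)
Your static decomposition is correct: expanding $\int F_N|Y^1|^2$ by exchangeability into $I_1,\dots,I_4$ and checking that the coefficient of $I_1$ collapses to $1/N^2$ is a valid (and clean) reformulation, and you correctly identify that everything hinges on the correlation fluctuations $P=I_2-I_1$ and $R=I_4-I_1$, which carry $O(1)$ coefficients and vanish only under exact chaos with respect to $f_t$. This is in the same spirit as the paper, which also reduces the problem to polynomial correlation functionals of degree at most four (using the explicit Maxwellian expansion of $a\ast f(v^i)-a(v^i-v^j)$) and handles the coincident-index contributions exactly as you handle $I_1/N^2$ and $(N-1)(I_3-I_1)/N^2$, via the fourth-moment propagation of Proposition \ref{moment estimate}.

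The gap is the dynamical step, which is precisely where all the work lies and which you only assert. It is not true that a single differentiation of $\int F_N(t)|Y^1_t|^2\,\ud V$ "reduces to $\lesssim(1+\mm_4)/N$ after collecting terms": applying the Liouville generator to the degree-four observables in $P(t)$ and $R(t)$ produces \emph{new} three- and four-particle correlation functionals built from observables such as $v_\alpha^2$ and $v_\alpha v_\beta$, which are not collision invariants, are not controlled by the quantity you are differentiating, and are not $O(1/N)$ pointwise in time for $t>0$. For these the cancellation is not algebraic but dynamical: e.g. the particle-level directional temperature $\Psi_\alpha(t)$ satisfies $\frac{\ud}{\ud t}\Psi_\alpha=4d-4d\Psi_\alpha+O(1/N)$-type identities, and smallness of $\Psi_\alpha-\me_\alpha$ (and of the analogous quantities with $\me_\alpha^2$, with $v^i_\alpha v^i_\beta$, etc.) is obtained only by matching against the ODE $\frac{\ud}{\ud t}\me_\alpha=4d-4d\me_\alpha$ and running Gr\"onwall with the $-4d$ (and $-8d$) damping; moreover each such estimate feeds into the next (the paper's Claims and Lemmas in Section \ref{poc} form a genuine hierarchy, with two-particle bounds used inside the derivatives of three- and four-particle quantities). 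Without this structure, a crude Gr\"onwall on your single differential inequality would at best yield a bound growing like $e^{CT}$ rather than the claimed $(1+T)/N$, since the derivative contains terms proportional to uncontrolled correlations rather than to $E(t)$ itself. So the proposal identifies the right quantities and the right philosophy (Maxwellian polynomial structure, initial chaos, moment propagation, $T\sim O(N)$ only to tame $e^{8T/N}$), but the claimed one-shot derivative bound is a missing proof of exactly the theorem's core content, and carrying it out forces you into the same hierarchy of damped Gr\"onwall estimates for the individual correlation functionals that the paper develops.
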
   
The proof of Theorem \ref{LLN} will be postponed to Section \ref{poc}, but we emphasize here that, the reason why we keep the structure $\textstyle\frac{1}{N}\sum_i$ instead of using some fixed index $i$ is to see the symmetry property easily. To deal with the second part in the last line of \eqref{Holder}, we need to take advantage of the logarithmic gradient and Hessian estimates and bound it with the moment of $F_N$. For the completeness of the proof, we present here two key estimates.

\begin{thm}[Logarithmic Gradient Estimate]\label{loggradient}
Assume that the solution of the   \, \,
Landau-Maxwellian equation $f \in \C^1([0,T],\C^2(\R^d))$ with its initial data
    satisfying the logarithmic growth bound \eqref{inigradient}  
        $$
        |\nabla \log f_0| \lesssim1+|v|;
$$
we further assume that the initial data can be controlled above by some Gaussian-type function with some constant $C_0>0$ as \eqref{inigauss}
\begin{equation*}
        f_0 \leq C_0\exp(-C_0^{-1}|v|^2).
    \end{equation*}
Then for any $t\in[0,T]$ we have
    \begin{equation*}
        |\nabla \log f| \lesssim1+\sqrt{t}+|v|.
    \end{equation*}
\end{thm}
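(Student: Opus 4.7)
The strategy is to apply the Bernstein method to $u = |\nabla g|^2$ where $g = \log f$, and compare against a polynomial-in-$v$ barrier via the parabolic maximum principle.

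First, I would derive the PDE satisfied by $g$. Rewriting \eqref{Landau1} as $\p_t f = (a\ast f):\nabla^2 f + d(d-1) f$ (using $-(c\ast f) = d(d-1)$), dividing by $f$, and exploiting $\nabla^2 f/f = \nabla^2 g + \nabla g \otimes \nabla g$, one obtains
$$
\p_t g = A:\nabla^2 g + A:\nabla g \otimes \nabla g + d(d-1), \qquad A(v,t) := (a\ast f)(v,t).
$$
Next, applying $\nabla$ to this equation, contracting with $\nabla g$, and rewriting third derivatives of $g$ in terms of $\nabla^2 u$ yields the schematic evolution
$$
\p_t u = A:\nabla^2 u + 2(A\nabla g) \cdot \nabla u - 2\operatorname{tr}\bigl(A(\nabla^2 g)^2\bigr) + R,
$$
where the trace term is non-positive thanks to $A \geq 0$, and the remainder $R$ collects a second-order cross term $2(\nabla g)^\top(\nabla A)[\nabla^2 g]$ together with a cubic term $2(\nabla g)^\top(\nabla A)[\nabla g \otimes \nabla g]$.

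The explicit form of the Maxwellian convolution is crucial here: using $\int v_\ast f \ud v_\ast = 0$ and $\int f \ud v_\ast = 1$ gives $A(v,t) = (|v|^2 + d)\Id - v\otimes v - T(t)$ with $T(t) = \int v_\ast \otimes v_\ast f \ud v_\ast$, so $|A| \lesssim 1+|v|^2$, $|\nabla A| \lesssim 1+|v|$, and $\nabla^2 A$ is bounded. By Young's inequality, the second-order cross term can be absorbed into the good friction term $-\operatorname{tr}(A(\nabla^2 g)^2)$, leaving a pointwise inequality of the form
$$
\p_t u \leq A:\nabla^2 u + 2(A\nabla g)\cdot\nabla u + C(1+|v|^2) u + C(1+|v|) u^{3/2}.
$$

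To conclude, I would set up a barrier $\phi(v,t) = K(1 + t + |v|^2)$ with $K$ large (depending on $C_0$ and the constant in \eqref{inigradient}), and apply the parabolic maximum principle to $w = u - \phi$. At an interior maximum we would have $\nabla u = \nabla \phi = 2Kv$, $\nabla^2 u \leq 2K\Id$, $\p_t u \geq K$; substituting into the inequality above and using $u = \phi$ at that point gives a contradiction once $K$ is large enough, whence $u \leq \phi$ globally. Taking square roots yields $|\nabla g| \lesssim 1 + \sqrt{t} + |v|$. The main obstacle I anticipate is balancing the cubic term $(1+|v|)u^{3/2}$ against $\p_t \phi = K$: since this term grows like $K^{3/2}(1+|v|)(1+|v|^2)^{3/2}$ at the max point, the polynomial in $|v|$ has to be matched by the structural decay coming from the Maxwellian form of $A$ and $\nabla A$. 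A secondary technical issue is justifying the maximum principle on the whole space $\R^d$: the Gaussian upper bound \eqref{inigauss}, propagated by Villani's regularity theory (Remark \ref{classical solution}), ensures $f$ keeps Gaussian decay on $[0,T]$, so $|\nabla g|$ grows at most polynomially in $|v|$ and any supremum of $w$ is attained on a compact set where the classical argument applies.
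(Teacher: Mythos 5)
Your overall plan (Bernstein quantity plus parabolic maximum principle) is in the right family, but the argument as written does not close, and the place where it fails is exactly the heart of the matter. With the schematic inequality $\partial_t u \leq A:\nabla^2 u + 2(A\nabla g)\cdot\nabla u + C(1+|v|^2)u + C(1+|v|)u^{3/2}$ and the barrier $\phi=K(1+t+|v|^2)$, the touching-point comparison gives on the left only $\partial_t\phi = K$, while on the right already the term $C(1+|v|^2)u = C(1+|v|^2)\phi \sim CK(1+|v|)^4$ (let alone the cubic term, of size $\sim CK^{3/2}(1+|v|)^4$) is unbounded in $|v|$; no choice of $K$ yields a contradiction, and enlarging $K$ only worsens the mismatch ($K^{3/2}$ versus $K$). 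There is no ``structural decay'' in $A$ or $\nabla A$ to invoke --- they grow quadratically and linearly. What actually saves the estimate is exact algebraic cancellation, not absorption: for the Maxwellian coefficient $A=(d+|v|^2)\Id - v\otimes v - T(t)$ one has $\partial_k A_{ij}=2v_k\delta_{ij}-\delta_{ik}v_j-\delta_{jk}v_i$, so the cubic term $(\partial_kg)(\partial_kA_{ij})\partial_ig\,\partial_jg$ vanishes identically, and the second-order cross term reorganizes into rotational combinations $(v_\alpha\partial_\beta-v_\beta\partial_\alpha)$ acting on $\nabla g$, which are absorbed by the degenerate part $|v|^2\Pi$ of the friction $-2\mathrm{tr}(A(\nabla^2 g)^2)$, leaving a remainder with \emph{bounded} coefficient, of the form $Cu$. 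This completion-of-squares computation is precisely the content of the paper's Lemma \ref{evo1} (performed there for $|\nabla f|^2/f$, yielding the clean constant $4d$); deferring it to a remainder to be beaten by a polynomial barrier is the gap.

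Two further structural differences matter. First, even after the cancellation the paper does not use a free-standing barrier $K(1+t+|v|^2)$: it propagates $\frac{|\nabla f|^2}{f}+Cf\log f-d(d-1)Ctf-C'f$, where Lemma \ref{evo1log} supplies the friction $-\eta\frac{|\nabla f|^2}{f}$ through the uniform ellipticity constant $\eta$ of \eqref{eta}, and the quadratic growth in $|v|$ enters only at the very end through the Gaussian lower bound of Proposition \ref{lowergauss}, i.e.\ $-\log f\lesssim 1+|v|^2$. In effect the quadratic ``barrier'' is $-C\log f$, which has its own dissipation under $\partial_t-\ml$; a hand-inserted $K(1+t+|v|^2)$ has none, which is another reason your comparison cannot close. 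Second, your justification of the maximum principle on $\R^d$ is not available: the a priori information (bounded $\nabla f$, Gaussian lower bound on $f$) only gives $|\nabla\log f|\lesssim e^{M|v|^2}$, not polynomial growth, so the supremum of $u-\phi$ need not be attained on a compact set. The paper's Proposition \ref{maximumprinciple} is designed exactly for this, allowing exponential growth $Ae^{M|v|^2}$ by comparison with a Gaussian auxiliary function on short time intervals; your proof would need to incorporate such a device rather than assert polynomial control.
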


\begin{thm}[Logarithmic Hessian Estimate]\label{loghessian}
    Under the same assumptions as in Theorem \ref{loggradient} and further assume that $f_0$ satisfies the logarithmic Hessian estimate \eqref{inihessian}
    \begin{equation*}
        |\nabla^2\log f_0|
    \lesssim1+|v|^2,
    \end{equation*}
    then for any $t$ we have
    \begin{equation*}
        |\nabla^2 \log f| \lesssim1+t+|v|^2.
    \end{equation*}
\end{thm}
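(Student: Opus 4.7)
The plan is to apply the Bernstein method to the Hessian $\nabla^2 \log f$ together with the parabolic maximum principle, paralleling the strategy of Theorem \ref{loggradient}. Setting $u = \log f$, the non-divergence form \eqref{Landau1} gives
\begin{equation*}
\p_t u = (a\ast f)^{\alpha\beta}\p_\alpha\p_\beta u + (a\ast f)^{\alpha\beta}\p_\alpha u\,\p_\beta u + d(d-1),
\end{equation*}
and thanks to the conservation laws \eqref{conservation}, the coefficient matrix admits the explicit form
\begin{equation*}
(a\ast f)^{\alpha\beta}(t,v) = (|v|^2+d)\delta_{\alpha\beta} - v_\alpha v_\beta - \int_{\R^d} w_\alpha w_\beta f(t,w)\,\ud w,
\end{equation*}
so that $a\ast f$ is symmetric with a positive lower bound $c_0\Id$ and upper bound $C(1+|v|^2)\Id$, its first derivatives in $v$ are linear in $v$, and its second derivatives are constants. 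Differentiating the $u$-equation twice, I obtain an equation for each entry $H_{kl}$ of the Hessian $H := \nabla^2\log f$, and after some algebra the scalar $\Phi := |H|^2$ satisfies
\begin{equation*}
\p_t \Phi - (a\ast f)^{\alpha\beta}\p_\alpha\p_\beta \Phi - 2(a\ast f)^{\alpha\beta}\p_\beta u\,\p_\alpha \Phi + 2(a\ast f)^{\alpha\beta}\p_\alpha H_{kl}\p_\beta H_{kl} = \mathcal{R},
\end{equation*}
where the remainder $\mathcal{R}$ collects a cubic-in-$H$ term of the form $(a\ast f)^{\alpha\beta}H_{kl}H_{\alpha k}H_{\beta l}$, a first-order-in-$\nabla H$ term $(\p_k(a\ast f)^{\alpha\beta})H_{kl}\p_l H_{\alpha\beta}$, and lower-order contributions involving $\nabla(a\ast f)$, $\nabla^2(a\ast f)$, and $\nabla u$.

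Next, the first-order-in-$\nabla H$ term is absorbed into the coercive quantity $2(a\ast f)^{\alpha\beta}\p_\alpha H_{kl}\p_\beta H_{kl}$ by Cauchy-Schwarz, exploiting the uniform positive definiteness of $a\ast f$; the gradient estimate of Theorem \ref{loggradient} is then invoked to control $|\nabla\log f|\lesssim 1+\sqrt{t}+|v|$ and thereby handle the remaining sub-leading contributions. With these preparations, I introduce the auxiliary function
\begin{equation*}
\Psi(t,v) := |\nabla^2\log f(t,v)|^2 - \Lambda\bigl(1+t+|v|^2\bigr)^2,
\end{equation*}
with a constant $\Lambda>0$ chosen large enough so that assumption \eqref{inihessian} forces $\Psi(0,\cdot)\le 0$. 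The goal is to prove $\Psi\le 0$ on $[0,T]\times\R^d$ by the parabolic maximum principle: the Gaussian-type decay \eqref{inigauss}, propagated in time via the regularity framework recalled in Remark \ref{classical solution}, together with the growth of the barrier, ensures $\Psi(t,v)\to -\infty$ as $|v|\to\infty$, so the supremum is attained. At a putative interior positive maximum $(t_0,v_0)$ the conditions $\p_t\Psi\ge 0$, $\nabla_v\Psi=0$, and $\nabla_v^2\Psi\le 0$ combined with the PDE for $\Phi$ and the pointwise lower bound $|H(t_0,v_0)|^2>\Lambda(1+t_0+|v_0|^2)^2$ must yield a contradiction for $\Lambda$ chosen sufficiently large, thereby closing the argument.

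The core obstacle is the cubic-in-$H$ term $(a\ast f)^{\alpha\beta}H_{kl}H_{\alpha k}H_{\beta l}$, which is not sign-definite and scales like $(1+|v|^2)|H|^3$; at the putative maximum this is of size $(1+t+|v|^2)^4$, whereas the competing contributions from applying the diffusion to the barrier only give $(1+t+|v|^2)^3$. Closing the estimate will therefore require extracting an algebraic cancellation from the identity $\nabla\Psi(t_0,v_0)=0$, which links $\nabla H$ to $2\Lambda(1+t_0+|v_0|^2)v_0$, or else strengthening the auxiliary function by a carefully tuned lower-order correction—for instance a term proportional to $|\nabla\log f|^4$ or a Gaussian weight—so that the borderline cubic contribution is neutralized. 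Handling this bootstrap between the gradient and Hessian controls is, in my view, the technical heart of the argument.
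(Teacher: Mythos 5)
Your approach differs fundamentally from the paper's, and the gap you yourself flag at the end is genuine and is not closable by the remedies you sketch. By differentiating the equation for $u=\log f$ twice and squaring, you run head-on into the cubic term $\bar a^{\alpha\beta}H_{kl}H_{\alpha l}H_{\beta k}$ that arises from the quadratic nonlinearity $\bar a^{\alpha\beta}\partial_\alpha u\,\partial_\beta u$. This term is not sign-definite and, with the quadratic barrier $\Lambda(1+t+|v|^2)^2$, it overwhelms the comparison at a putative maximum by a full power of $(1+t+|v|^2)$, as you correctly note. The two fixes you suggest do not close it: the relation $\nabla\Psi(t_0,v_0)=0$ links $\nabla(|H|^2)$ to $\nabla$ of the barrier, but the cubic term lives at the zero-derivative level of $H$ and survives that cancellation; and adding a correction like $|\nabla\log f|^4$ or a Gaussian weight to the auxiliary function produces, under the operator, terms of the same borderline scaling as those you are trying to kill. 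There is no free sign to harvest because $H=\nabla^2\log f$ changes sign.

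The paper avoids this obstruction by not propagating $|\nabla^2\log f|^2$ at all. Instead it applies Bernstein to the \emph{non-logarithmic} quantity $\frac{|\nabla^2 f|^2}{f}$, for which Lemma~\ref{evo2} gives $(\partial_t-\ml)\frac{|\nabla^2 f|^2}{f}\le 16d\,\frac{|\nabla^2 f|^2}{f}$ --- only \emph{linear} growth in the quantity being estimated, precisely because the Landau operator in the form $\ml$ acts linearly on $f$ and the problematic nonlinearity is never introduced. The coercive pieces in the computation of $(\partial_t-\ml)\frac{|\nabla^2 f|^2}{f}$ are quadratic in $\nabla^3 f$ and come with the right sign from the strict ellipticity $D_{\alpha\alpha}\le d-1-\eta$, which is what permits the absorption. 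The paper then forms a linear combination of $\frac{|\nabla^2 f|^2}{f}$, $f(\log f)^2$, $f\log f$, $\frac{|\nabla f|^2}{f}$ and $f$ with time-dependent coefficients so that the total propagated quantity is nonincreasing in the sense of the maximum principle (Proposition~\ref{maximumprinciple}), yielding a bound on $\frac{|\nabla^2 f|^2}{f^2}$ in terms of $(\log f)^2$, $\log f$, $|\nabla\log f|^2$, and a polynomial in $t$. The final estimate on $|\nabla^2\log f|$ follows only at the very end, via the algebraic identity $\nabla^2\log f = \frac{\nabla^2 f}{f} - \nabla\log f\otimes\nabla\log f$ combined with the already-established logarithmic gradient bound and the Gaussian lower bound of Proposition~\ref{lowergauss}. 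In short: the missing idea is to choose the right Bernstein quantity. Working with $\frac{|\nabla^2 f|^2}{f}$ instead of $|\nabla^2\log f|^2$ eliminates the cubic term at the source, after which the comparison closes with the same maximum-principle machinery you invoke.
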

The proofs, which are similar to the logarithmic estimates in \cite{feng2023quantitative}, are presented in Section \ref{estimate}. These two estimates together tell us that the $\nabla^2f/f$ has the quadratic growth bound in velocity, say
$$
\Big|\frac{\nabla^2 f}{f}(v^i,t)\Big|\leq \big|\nabla^2 \log f(v^i,t)\big|+\Big|\nabla \log f(v^i,t)\Big|^2\lesssim1+t+|v^i|^2.
$$
By the moment estimate Proposition \ref{moment estimate}, for $t\in[0,T]$, the second part can finally be bounded by some constant only depends on $T$ and the initial 4-th moment $\M_4(0)$ as
\begin{equation}\label{fourth order}
\begin{aligned}
\sup_{t\in[0,T]}\int_{\R^{dN}} F_N\frac{1}{N}\sum_{i=1}^{N}\Big|\frac{\nabla^2 f}{f}(v^i,t)\Big|^2\ud V\lesssim&\int_{\R^{dN}}F_N\frac{1}{N}\sum_{i=1}^N(1+t+|v^i|^2)^2\ud V\\\lesssim& 
 1+T^2+\mm_4(0)e^{\frac{8T}{N}}.
\end{aligned}
\end{equation}
We plug estimates  \eqref{cancel estimate} and \eqref{fourth order} above into \eqref{Holder} to  obtain that for $T<N$,
$$
\sup_{t\in[0,T]}\bigg|\frac{1}{N}\sum_{i=1}^N\int_{\R^{dN}} F_N\Big[a\ast f(v^{i})-\frac{1}{N}\sum_{j=1}^{N}a(v^{i}-v^{j})
\Big]: \frac{\nabla^2_{v^i}f_N}{f_N}\ud V\bigg|\lesssim \frac{(1+\mm_4(0))(1+T^\frac{3}{2})}{\sqrt{N}}.
$$
By the evolution of the relative entropy Proposition \ref{evolution RE}, together with our initial assumption $H_N(F_N|f_N)(0)=0$, we can conclude the main result Theorem \ref{main} as
$$
\begin{aligned}
  &H_N(F_N|f_N)(t)\\
  \leq\,&  \int_{0}^{t}\sup_{s\in[0,T]}\bigg|\frac{1}{N}\sum_{i=1}^N\int_{\R^{dN}} F_N\Big[a\ast f(v^{i})-\frac{1}{N}\sum_{j=1}^{N}a(v^{i}-v^{j})
\Big]: \frac{\nabla^2_{v^i}f_N}{f_N}\ud V\bigg|\ud s\\
 \leq\,& \frac{C(1+\mm_4(0))(1+T^\frac{5}{2})}{\sqrt{N}}.
\end{aligned}
$$

\section{Maximum Principle}\label{formulation}

We shall present in this section the classical formulation of the Landau-Maxwellian equation. Briefly speaking, we rewrite the Landau collision operator in the form of a second-order elliptic operator, which is natural to satisfy some maximum principle. This will be the key method to derive our logarithmic estimates. 

\subsection{Formulation of the Landau-Maxwellian Equation}

The first step is to simplify the collision operator by making use of the conservation laws \eqref{conservation}, which is directly computed as in \cite[Section 2]{villani1998spatially}, say rewriting \eqref{Landau1} into
\begin{equation*}
    \partial_t f=\ml f=\Bar{a}_{\alpha\beta}\partial_{\alpha\beta}f+d(d-1)f,
\end{equation*}
where the matrix $(\bar a_{\alpha\beta})_{d\times d}$ is given by
\begin{equation}\label{bara}
\Bar{a}_{\alpha\beta}=a_{\alpha \beta} \ast f=d\delta_{\alpha \beta}+(|v|^2\delta_{\alpha \beta}-v_\alpha v_\beta)-\mathcal{E}_{\alpha \beta}(t),
\end{equation}
where $\mathcal{E}_{\alpha \beta}(t)$ is the directional temperature defined by
\begin{equation}\label{Eab}
\mathcal{E}_{\alpha \beta}(t):=\int_{\R^d}f(t,v)v_\alpha v_\beta \ud v.    
\end{equation}
Define the initial condition
\begin{equation*}
    D_{\alpha \beta}:=\int_{\R^d} f_0 v_\alpha v_\beta-\delta_{\alpha \beta}.
\end{equation*}
Without loss of generality, by choosing an appropriate orthonormal basis, we may diagonalize the matrix $(D_{\alpha \beta})_{d\times d}$ as 
\begin{equation*}
    (D_{\alpha \beta})= \text{ Diag } (D_{11},...,D_{dd})
\end{equation*}
with
\begin{equation*}
D_{\alpha \beta}=\int_{\R^d}f_0v_\alpha v_\beta =0,\mbox{ for } \alpha \neq \beta;\quad  D_{\alpha \alpha}=\int_{\R^d} f_0 v_\alpha^2-1,\text{ and } \sum_{\alpha=1}^d D_{\alpha \alpha}=0.
\end{equation*}
 For any smooth test function $\vphi(v)$,  the weak form of the collision operator $\ml$ is given by \begin{equation}\label{weak form}
\begin{aligned} & \int_{\R^d} \ml f(v)\vphi(v)  \ud v \\ &= \frac{1}{2} \sum_{\alpha=1}^d \sum_{\beta=1}^d \iint_{\R^d \times \R^d} f(v) f(w) a_{\alpha\beta}(v-w)\big(\partial_{\alpha\beta} \varphi(v)+\partial_{\alpha\beta} \varphi(w)\big) \ud v \ud w \\ & \quad+\sum_{\alpha=1}^d \iint_{\R^d \times \R^d}  f(v) f(w) b_\alpha(v-w)\big(\partial_\alpha \varphi(v)-\partial_\alpha \varphi(w)\big) \ud v \ud w .\end{aligned}
\end{equation}
By taking $\vphi$ as $1, v, |v|^2$ respectively, we can easily verify the conservation laws \eqref{conservation}; moreover, we can take $\vphi(v)=v_\alpha^2$ to obtain that
$$
\begin{aligned}
 \frac{\ud}{\ud t}\int_{\R^d} f(t,v)v_\alpha^2\ud v
 =&2\int_{\R^d \times \R^d} \big(|v-w|^2-(v-w)_\alpha^2\big)f(t,w)f(t,v)\ud w\ud v\\&-2(d-1)\int_{\R^d \times \R^d}(v-w)^2_\alpha f(t,w)f(t,v)\ud w\ud v\\
=&4\int_{\R^{d}}f(t,v)|v|^2\ud v-4d\int_{\R^{d}} f(t,v)v_\alpha^2\ud v ,
\end{aligned}
$$
which yields the explicit expression of the directional temperature \eqref{Eab} when $\alpha=\beta$
\begin{equation}\label{direction temp}
\mathcal{E}_{\alpha}(t) :=\mathcal{E}_{\alpha \alpha}(t)=1+(\mathcal{E}_{\alpha}(0)-1)e^{-4dt}=1+D_{\alpha\alpha}e^{-4dt}.    
\end{equation}
As mentioned in \cite{villani1998spatially,desvillettes2000spatially2}, under the assumption that $f_0 \in L^1$, it cannot hold that $D_{\alpha \alpha}=d-1$ or $D_{\alpha \alpha}=-1$, since the density function cannot be concentrated in some $(d-1)$-dimensional hyperplane. Hence there exists some $\eta>0$ depending on $f_0$, such that for any $\alpha$,
\begin{equation}\label{eta}
    -1+\eta \leq D_{\alpha \alpha} \leq d-1-\eta,
\end{equation}
and the collision operator $\mathcal{L}$ is strictly elliptic for any time $t \geq 0$ due to \eqref{direction temp}.
Similarly, if we fix some indices $\alpha\neq\beta$ and  take $\vphi(v)=v_\alpha v_\beta$, we can obtain that 
$$
\begin{aligned}
 \frac{\ud}{\ud t}\int_{\R^{d}} f(t,v)v_\alpha v_\beta\ud v
 =&2\int_{\R^{2d}} \big(-(v-w)_\alpha(v-w)_\beta\big)f(t,w)f(t,v)\ud w\ud v\\&-2(d-1)\int_{\R^{2d}}(v-w)_\alpha(v-w)_\beta f(t,w)f(t,v)\ud w\ud v\\
=&-4d\int_{\R^{d}} f(t,v)v_\alpha v_\beta\ud v,
\end{aligned}
$$
with initial condition $\int_{\R^d}f_0v_\alpha v_\beta=0,\mbox{ for } \alpha\neq \beta$.  This implies that for any $t\geq 0$,
\begin{equation}\label{cross conservation}
\me_{\alpha \beta}(t)=\int_{\R^d}f(t,v)v_\alpha v_\beta=0,\mbox{ for } \alpha\neq \beta.  
\end{equation}
Substitute the explicit form of $\me_{\alpha\beta}$ into  \eqref{bara} to get
$$
\Bar{a}_{\alpha\beta}=d\delta_{\alpha \beta}+(|v|^2\delta_{\alpha \beta}-v_\alpha v_\beta)-\delta_{\alpha \beta}(1+D_{\alpha\alpha}e^{-4dt}),
$$
and the simplified formulation of Maxweillian-Landau equation
\begin{equation}\label{simplified}
    \partial_t f=\ml f=\big( (d-1)\Delta+|v|^2 \Delta-v_\alpha v_\beta \partial_{\alpha \beta}-e^{-4dt}D_{\alpha \alpha}\partial_{\alpha \alpha}+d(d-1)\big)f.
\end{equation}
We can further decompose the collision operator into two parts as in \cite[Section 4]{villani1998spatially}, which are both independent of the orthonormal basis, say
\begin{equation*}
\ml f=\ml_1 f+\ml_2 f,
\end{equation*}
where the linear Fokker-Planck-like part reads as
\begin{equation*}
    \ml_1 f=(d-1)\Delta f+(d-1)\nabla\cdot ( fv )-e^{-4dt}D_{\alpha \alpha}\partial_{\alpha \alpha}f,
\end{equation*}
and the Laplace-Beltrami type operator part becomes
$$
\ml_2 f=(|v|^2\delta_{\alpha \beta}-v_\alpha v_\beta)\p_{\alpha\beta} f-(d-1)v \cdot \nabla f.$$

\subsection{Parabolic Maximum Principle}

As explained in previous texts, the key technique which we shall use in the main logarithmic estimates is the well-known parabolic maximum principle. Since the collision operator in the form of \eqref{simplified} is strictly elliptic, one may expect that $\partial_t-\ml$ satisfies the parabolic maximum principle as the classical heat equation. Our main logarithmic estimates rely on this property. We now give a detailed discussion of the principle in our collision operator setting.
\begin{prop}[Maximum Principle]\label{maximumprinciple}
    Suppose that $u \in \C^1([0,T],\C^2(\R^d))$ satisfies
    \begin{equation*}
        (\partial_t-\ml)u=(\partial_t-(d-1)\Delta-|v|^2\Delta+v_\alpha v_\beta \partial_{\alpha \beta}+e^{-4dt}D_{\alpha \alpha}\partial_{\alpha \alpha}-d(d-1))u \leq 0
    \end{equation*}
    with the initial condition
    \begin{equation*}
        u(0,\cdot) \leq 0.
    \end{equation*}
    Assume further that $u$ satisfies the a priori exponential growth condition with some constants $A,M>0$,
    \begin{equation}\label{aprioriexp}
        u(t,v) \leq Ae^{M|v|^2}
    \end{equation}
    for $t \in [0,T]$ and $v \in \R^d$. Then $u \leq 0$ holds for any $t \in [0,T]$.
\end{prop}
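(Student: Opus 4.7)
The plan is a classical parabolic maximum principle argument on a small time interval, using an exponential--Gaussian barrier to beat the a priori growth \eqref{aprioriexp}, followed by iteration to reach arbitrary $T$. First I would rewrite $\mathcal{L}$ in its non-divergence form $\mathcal{L}u = \bar{a}_{\alpha\beta}\partial_{\alpha\beta}u + d(d-1)u$ with $\bar{a}=a*f$. The computations already carried out in Section \ref{formulation} ensure that $\bar{a}(t,v)$ is symmetric and positive semi-definite uniformly in $t\in[0,T]$ (by \eqref{direction temp} and \eqref{eta}) and that its entries grow only quadratically in $v$; in particular one finds $\mathrm{tr}\,\bar{a} = (d-1)(d+|v|^2)$ and, after the characteristic cancellation coming from the projection structure of $a$, $v^\top \bar{a}\,v \leq 2(d-1)|v|^2$ (no $|v|^4$ contribution).

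For parameters $K(t),\alpha(t)>0$ to be chosen, introduce the barrier $\psi(t,v) = \exp(K(t)+\alpha(t)|v|^2)$. A direct calculation gives
\begin{equation*}
\frac{(\partial_t-\mathcal{L})\psi}{\psi} = K'(t) + \alpha'(t)|v|^2 - 2\alpha(t)\,\mathrm{tr}\,\bar{a} - 4\alpha(t)^2\, v^\top\bar{a}\,v - d(d-1),
\end{equation*}
and the two bounds above make the right-hand side at least $1$ provided
\begin{equation*}
K'(t) \geq 2d(d-1)\alpha(t) + d(d-1) + 1, \qquad \alpha'(t) \geq 2(d-1)\alpha(t) + 8(d-1)\alpha(t)^2.
\end{equation*}
Choosing $\alpha(0) = M+1$ (so $\psi$ strictly dominates the a priori bound $Ae^{M|v|^2}$ at spatial infinity) and $K(0)$ suitably large, one solves the Riccati inequality on a short interval $[0,\tau]$ strictly below its blow-up time, producing a smooth barrier satisfying $(\partial_t-\mathcal{L})\psi \geq \psi > 0$ on $[0,\tau]\times\R^d$.

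Now set $u_\varepsilon := u - \varepsilon\psi$ and argue by contradiction: suppose $u_\varepsilon > 0$ somewhere in $[0,\tau]\times\R^d$. Because $\alpha(t)>M$, $u_\varepsilon(t,\cdot) \to -\infty$ as $|v|\to\infty$ uniformly in $t$, so $M(t) := \sup_v u_\varepsilon(t,v)$ is attained at a finite point and is continuous in $t$; since $M(0) \leq -\varepsilon\psi(0,\cdot) < 0$, there is a first contact time $t^*\in(0,\tau]$ with $M(t^*)=0$, attained at some $v^*\in\R^d$. Calculus at the interior space-time maximum $(t^*,v^*)$ gives $\partial_t u_\varepsilon \geq 0$, $\nabla u_\varepsilon=0$, $\nabla^2 u_\varepsilon \leq 0$; combined with $\bar{a}\geq 0$ this yields $\bar{a}:\nabla^2 u_\varepsilon \leq 0$ and $u_\varepsilon(t^*,v^*)=0$, whence $(\partial_t-\mathcal{L})u_\varepsilon(t^*,v^*) \geq 0$. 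By construction, however, $(\partial_t-\mathcal{L})u_\varepsilon \leq -\varepsilon\psi < 0$ on $[0,\tau]\times\R^d$, the desired contradiction. Sending $\varepsilon\to 0^+$ gives $u\leq 0$ on $[0,\tau]\times\R^d$.

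The main obstacle is that the Riccati inequality forces $\alpha$ to blow up in finite time, so the barrier only controls the solution on a time window $[0,\tau]$ whose length depends on $M$ and $d$. For general $T$ I would iterate: once $u\leq 0$ on $[0,\tau]$ has been proven, the a priori bound $u(t,v) \leq Ae^{M|v|^2}$ continues to hold on $[\tau,T]$, so the same construction with a fresh barrier initialised at $t=\tau$ applies on $[\tau,2\tau]$, and finitely many such steps of uniform length $\tau=\tau(M,d)$ cover $[0,T]$.
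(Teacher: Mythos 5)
Your argument is correct and follows essentially the same route as the paper: construct a Gaussian-in-$v$ barrier that dominates the a priori bound, observe that the Riccati-type condition forces the barrier to blow up after a short time $\tau=\tau(M,d)$, run a contact-point maximum principle argument on $[0,\tau]$, and iterate in time using that the hypotheses persist. The only differences are cosmetic: the paper first conjugates by $e^{-(d^2-d+1)t}$ to make the zeroth-order term non-negative (you instead absorb the $-d(d-1)$ term into the condition on $K'$, which works equally well since $u_\varepsilon$ vanishes at the contact point), and the paper picks the explicit barrier $\delta(t)=\tfrac{1}{4d(T+\varepsilon-t)}$, $\beta(t)=\tfrac{(d-1)t}{4\varepsilon}$ while you leave the Riccati inequality to be solved; your explicit bounds $\operatorname{tr}\bar a=(d-1)(d+|v|^2)$ and $v^\top\bar a\,v\leq 2(d-1)|v|^2$ are also correct and replace the paper's use of $\ml_2\phi=0$ for radial $\phi$.
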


\begin{proof}
    We first assume that $$T<\min\left\{\frac{1}{2d(d-1)},\frac{1}{8dM}\right\},$$ which then follows that there exists some $\eps$ that  $$0<\eps<\min\left\{\frac{1}{2d(d-1)},\frac{1}{8dM}\right\}-T.$$ Our idea is similar to the proof of the maximum principle of the heat equation \cite[Section 2.3]{evans2022partial}, say firstly proving the result for a short time horizon $[0,T]$, and then proceeding from time $T$ to prove the result in $[T, 2T]$ and so on.
    
    We firstly need to change the variable to make the constant term in the parabolic operator to be 
    non-negative. Hence we set $w=e^{-(d^2-d+1)t}u$ to find that
    \begin{equation*}
        (\partial_t-\ml)w=e^{-(d^2-d+1)t}(\partial_t-\ml)u-(d^2-d+1)e^{-(d^2-d+1)t}u,
    \end{equation*}
which satisfies
    \begin{equation*}
    \begin{aligned}
  (\partial_t-\Tilde{\ml})w:=&(\partial_t-(d-1)\Delta-|v|^2\Delta+v_\alpha v_\beta \partial_{\alpha \beta}+e^{-4dt}D_{\alpha \alpha}\partial_{\alpha \alpha}+1)w \\=&(\p_t-\Bar{a}_{\alpha\beta}\partial_{\alpha\beta}+1)w=e^{-(d^2-d+1)t}(\partial_t-\ml)u
  \leq 0.     
    \end{aligned}
    \end{equation*}
Next, we construct a radially symmetric auxiliary function $\phi$ which satisfies $(\p_t-\Tilde{\ml})\phi\geq0$,  namely,
    \begin{equation*}
        \phi(t,v)= \mu e^{\beta(t)}e^{\delta(t) \frac{|v|^2}{2}}, 
    \end{equation*}
    where $\beta(t)$ and $\delta(t)$ are some functions to be determined, and $\mu$ is any positive number. Notice that the Laplace-Beltrami type part does not play a role on the radially symmetric function $\phi$,  i.e. $\ml_2\phi=0$, then it holds that
    \begin{align*}
        &(\p_t-\Tilde{\ml})\phi\\=&(\partial_t-(d-1)\Delta-(d-1)v\cdot \nabla +e^{-4dt}D_{\alpha \alpha}\partial_{\alpha \alpha}+1)\phi\\
        =&\Big(\beta^\prime+ \delta^\prime \frac{|v|^2}{2}-d(d-1)\delta-(d-1)\delta^2|v|^2-(d-1)\delta |v|^2+e^{-4dt}\delta^2 D_{\alpha \alpha}v_\alpha^2+1\Big)\phi\\
        \geq&\Big((\beta^\prime-d(d-1)\delta+1)+|v|^2\big(\frac{\delta^\prime}{2}-2(d-1)\delta^2-(d-1)\delta \big)\Big)\phi,
\end{align*}
since
$$
|D_{\alpha \alpha}v_\alpha^2|\leq (\sup|D_{\alpha \alpha}|)|v|^2\leq  (d-1-\eta)|v|^2.
$$
To satisfy $(\p_t-\Tilde{\ml})\phi\geq0$, it is sufficient to choose
    \begin{equation*}
        \beta^\prime \geq d(d-1)\delta-1, \quad \delta^\prime \geq 4(d-1)\delta^2+2(d-1)\delta.
    \end{equation*}
This can be verified by choosing $$\delta(t)=\frac{1}{4d(T+\eps-t)}\quad\text{and}\quad\beta(t)=\frac{(d-1)t}{4\eps}$$ thanks to the assumption on $T$. Now we summarize that $w-\phi$ satisfies
    \begin{equation*}
        (\partial_t-\Tilde{\ml})(w-\phi) \leq 0
    \end{equation*}
with $w(0)-\phi(0) \leq u(0)\leq 0$. Since $T+\eps<\frac{1}{8dM}$, for large enough $R>0$, in the region $|v|\geq R$  we have 
    \begin{equation*}
        w(t,v)-\phi(t,v) \leq e^{-(d^2-d+1)t}Ae^{MR^2}- \mu e^\frac{(d-1)t}{4\eps} e^\frac{R^2}{8d(T+\eps-t)} \leq 0.
    \end{equation*}
We prove by contradiction that if $w-\phi$ had its positive maximum in $[0,T] \times B_R$, say at $(t_0,v_0)$, then following conditions hold:
  $$
  \partial_t (w-\phi)(t_0,v_0) \geq 0,\quad \bar a_{\alpha\beta}\p_{\alpha\beta}(w-\phi)(t_0,v_0)\leq 0,\text{ and } (w-\phi)(t_0,v_0)\geq 0;
  $$
namely  $$(\partial_t-\Tilde{\ml})(w-\phi)=(\partial_t-\bar a_{\alpha\beta}\p_{\alpha\beta}+1)(w-\phi) \geq 0,$$
which contradicts with that $(\partial_t-\Tilde{\ml})(w-\phi) \leq 0$. Hence for any $\mu>0$, we have for sufficiently large $R$ that 
    $
        w(t,v) \leq \phi(t,v)
 $
    holds in $[0,T] \times \R^d$. Fix $w$ and let $\mu \rightarrow 0$, we have $w(t,v) \leq 0$ and thus $u(t,v) \leq 0$ for $t\in[0,T]$.


    Finally, for general $T>0$, we divide $[0,T]$ into many time intervals such that the length of each interval satisfies the assumption, and proceed as above.
\end{proof}

In the similar spirit of  Proposition \ref{maximumprinciple}, we can derive the Gaussian lower bound by the comparison principle, which is useful for further regularity estimates. This has been done in \cite[Theorem 3]{villani1998spatially}, once we notice that the initial growth condition \eqref{inigradient} implies directly that the initial density has Gaussian lower bound, say
$f_0(v) \geq C^{-1}\exp(-C|v|^2)$
for some large constant $C$.

\begin{prop}[Gaussian Lower Bound]\label{lowergauss}
    Assume that the solution of the Landau-Maxwellian equation $f \in \C^1([0,T],\C^2(\R^d))$ with its initial data
    satisfying the logarithmic gradient growth estimate \eqref{inigradient}
    \begin{equation*}
        |\nabla \log f_0| \leq C_1(1+|v|),
    \end{equation*}
    then for any $t \in [0,T]$ we have
    \begin{equation*}
        f(t,v) \geq C_2 \exp\Big(-C_2'(t) \frac{|v|^2}{2}\Big)
    \end{equation*}
    for some  constant $C_2$ and $C_2(t)>1$ with $C_2'(t) \rightarrow 1$ as $t \rightarrow \infty$.
\end{prop}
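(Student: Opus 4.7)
The plan is to mimic the comparison argument of Proposition \ref{maximumprinciple}, using a Gaussian barrier from below. First I would turn the initial logarithmic gradient bound into a Gaussian lower bound at $t=0$: integrating $|\nabla \log f_0(v)| \leq C_1(1+|v|)$ along radial rays from the origin yields $\log f_0(v) \geq \log f_0(0) - C(1+|v|^2)$, and since $f_0$ has unit mass, finite energy, and is continuous, $f_0(0)$ can be assumed positive (otherwise one first waits an infinitesimal time so that the strict positivity produced by the parabolic regularization applies, using Remark \ref{classical solution}). Hence $f_0(v) \geq C^{-1}\exp(-C|v|^2)$ for some $C>0$.

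Next, I would construct a radially symmetric Gaussian barrier of the form
\begin{equation*}
g(t,v) = \mu\, e^{-\beta(t)} \exp\Big(-\delta(t)\,\frac{|v|^2}{2}\Big),
\end{equation*}
with $\mu>0$ small, $\beta(t)$ and $\delta(t)$ to be chosen, and verify by direct computation, using $\p_t g$, $\nabla g = -\delta(t)\, v\, g$ and $\p_{\alpha\beta} g = (\delta^2 v_\alpha v_\beta - \delta \delta_{\alpha\beta})g$, that
\begin{equation*}
(\p_t - \ml)g = \Big[-\beta'(t) - \tfrac{\delta'(t)}{2}|v|^2 - \bar a_{\alpha\beta}(\delta^2 v_\alpha v_\beta - \delta \delta_{\alpha\beta}) - d(d-1)\Big]g.
\end{equation*}
Using $\bar a_{\alpha\beta}v_\alpha v_\beta = (\Id : \bar a)|v|^2/d + \text{traceless part}\geq 0$ and the fact that $\bar a_{\alpha\beta}\delta_{\alpha\beta} = \operatorname{tr}\bar a = (d-1)(|v|^2 + d) + O(e^{-4dt}|v|^2)$ from \eqref{bara}--\eqref{direction temp}, one obtains a differential inequality of the form
\begin{equation*}
(\p_t - \ml)g \leq \big[ P(t,v)\big] g,
\end{equation*}
where $P(t,v)$ is quadratic in $|v|$ with coefficients depending affinely on $\beta'(t),\delta'(t),\delta(t)$ and $e^{-4dt}$. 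I would then choose $\delta(t)$ to solve a Riccati inequality $\delta'(t) \geq A\delta^2(t) - B\delta(t) + C e^{-4dt}$ starting from $\delta(0)$ large enough to dominate the initial Gaussian, and $\beta(t)$ to absorb the $v$-independent terms, ensuring simultaneously that $(\p_t - \ml)g \leq 0$ and $g(0,v) \leq f_0(v)$.

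With the barrier in place, the key step is to apply Proposition \ref{maximumprinciple} to $u := g - f$: since $(\p_t - \ml)u \leq 0$, $u(0,\cdot)\leq 0$, and the Gaussian shape of $g$ together with the classical decay of $f$ (Remark \ref{classical solution}) supplies the a priori exponential growth bound \eqref{aprioriexp}, we conclude $g \leq f$ on $[0,T]\times\R^d$. The last point, which is the most delicate, is to obtain the sharp long-time behavior $\delta(t)\to 1$. For this I would exploit the explicit exponential decay $\me_\alpha(t) - 1 = D_{\alpha\alpha}e^{-4dt}$ in \eqref{direction temp}: since the driving inhomogeneity $e^{-4dt}D_{\alpha\alpha}$ in the Riccati-type inequality for $\delta$ vanishes exponentially, the stable equilibrium of the limiting ODE $\delta' = A\delta^2 - B\delta$ drives $\delta(t)$ towards $1$, matching the Maxwellian equilibrium $(2\pi)^{-d/2}\exp(-|v|^2/2)$ dictated by the normalized conservation laws \eqref{conservation}. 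The main obstacle I foresee is precisely keeping track of the anisotropy carried by $D_{\alpha\alpha}$ through the Riccati analysis so that the barrier remains radial; choosing $\delta(t)$ slightly larger than the worst directional coefficient and showing it still tends to $1$ as $t\to\infty$ is the technical heart of the proof.
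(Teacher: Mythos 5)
Your overall strategy is exactly the one the paper has in mind: the paper does not spell out a proof at all, but simply observes that the gradient bound at $t=0$ yields a Gaussian lower bound on $f_0$ and then defers the propagation to Villani's Theorem~3, which is precisely the comparison argument with a radial Gaussian sub-solution that you reconstruct. So the architecture (initial Gaussian lower bound, Gaussian barrier $g=\mu e^{-\beta(t)}\exp(-\delta(t)|v|^2/2)$, maximum principle applied to $g-f$, ODE for $\delta(t)$) is correct and matches the paper's intent.

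A few of your intermediate claims need correcting, though. First, the side remark about ``waiting an infinitesimal time'' is unnecessary: the bound $|\nabla\log f_0|\leq C_1(1+|v|)$ already presupposes $\log f_0$ finite everywhere, hence $f_0>0$ everywhere and in particular $f_0(0)>0$. Second, your statement ``$\bar a_{\alpha\beta}v_\alpha v_\beta = (\Id:\bar a)|v|^2/d+\text{traceless part}\geq 0$'' is muddled; the clean fact is simply that $\bar a$ is positive semi-definite (being a convolution of the PSD matrix $a$ with $f\geq 0$), so $\bar a_{\alpha\beta}v_\alpha v_\beta\geq 0$, or more quantitatively, from \eqref{bara}--\eqref{direction temp} and \eqref{eta}, $\bar a_{\alpha\beta}v_\alpha v_\beta=(d-1)|v|^2-e^{-4dt}\sum_\alpha D_{\alpha\alpha}v_\alpha^2\geq\eta|v|^2$. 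Third, the trace formula is exact: since $\sum_\alpha D_{\alpha\alpha}=0$, one has $\operatorname{tr}\bar a=(d-1)(d+|v|^2)$ with \emph{no} $O(e^{-4dt}|v|^2)$ correction — the exponentially decaying anisotropy only enters through $\bar a_{\alpha\beta}v_\alpha v_\beta$, not through the trace. Fourth, your schematic Riccati inequality $\delta'\geq A\delta^2-B\delta+Ce^{-4dt}$ has the wrong structure: carrying out the computation gives the coefficient of $|v|^2$ in $(\p_t-\ml)g/g$ as $-\tfrac{\delta'}{2}-\delta^2\big((d-1)-(d-1-\eta)e^{-4dt}\big)+\delta(d-1)$, so the constraint is $\delta'\geq 2(d-1)\delta(1-\delta)+2(d-1-\eta)\delta^2e^{-4dt}$; in particular the $\delta^2$ coefficient in the autonomous part is \emph{negative}, and the inhomogeneity carries a $\delta^2$ factor, not a constant. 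Finally, the anisotropy worry you flag as the ``technical heart'' is actually benign: because the trace is isotropic and the quadratic form is bounded via the worst directional coefficient $d-1-\eta$, a purely radial barrier works with no extra effort; the genuine work is only the ODE analysis showing $\delta(t)\to 1$ once the inhomogeneity has decayed.
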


\section{Logarithmic Estimates}\label{estimate}

In this section we shall give the logarithmic gradient estimate and logarithmic Hessian estimate as in \cite{feng2023quantitative}, which are necessary for evaluating the relative entropy. Our goal is to derive the linear growth of $|\nabla \log f|$ and quadratic growth of $|\nabla^2 \log f|$, once the same conditions are satisfied by the initial case. We shall apply the Bernstein method, say constructing some auxiliary functions and propagating them by the parabolic operator. Using the maximum principle Proposition \ref{maximumprinciple} and the initial condition, these functions are proved to be non-negative, which gives bounds on our desired quantities, namely Theorem \ref{loggradient} and Theorem \ref{loghessian}. Proofs are given respectively in the coming subsections. 
\subsection{Logarithmic Gradient Estimate}
    We will need the following two results of propagation, which are nothing but elementary calculations.
    \begin{lemma}\label{evo1}
         $(\partial_t-\ml)\frac{|\nabla f|^2}{f} \leq 4d\frac{|\nabla f|^2}{f}$.
    \end{lemma}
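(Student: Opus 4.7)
The natural strategy here is Bernstein's method. I would set $u = |\nabla f|^2/f$ and compute $(\partial_t - \mathcal{L}) u$ directly using the non-divergence form of the PDE $\partial_t f = \bar a_{ij}\partial_{ij} f + d(d-1)f$. First differentiate the equation in $v_k$ to obtain
$$\partial_t f_k = \bar a_{ij} f_{ijk} + (\partial_k \bar a_{ij}) f_{ij} + d(d-1) f_k,$$
then apply the quotient rule to get $\partial_t u$ and expand $\bar a_{ij} \partial_{ij} u$ via the product rule.

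Next I would track the cancellations between $\partial_t u$ and $\bar a_{ij} u_{ij}$. The two cubic-derivative contributions $\frac{2\bar a_{ij} f_k f_{ijk}}{f}$ coming from $\partial_t u$ and from $\bar a_{ij} u_{ij}$ are identical and drop out; similarly the two occurrences of $\frac{\bar a_{ij} f_k f_k f_{ij}}{f^2}$ cancel; and the $d(d-1)$ zeroth-order pieces collapse with the $-d(d-1)u$ in $\mathcal{L} u$. After bookkeeping, the clean identity
$$(\partial_t - \mathcal{L}) u = \frac{2 f_k (\partial_k \bar a_{ij}) f_{ij}}{f} - \frac{2\bar a_{ij}}{f}\Bigl(f_{ki} - \frac{f_k f_i}{f}\Bigr)\Bigl(f_{kj} - \frac{f_k f_j}{f}\Bigr)$$
should emerge, where the second term is the Bochner-type contribution and is non-positive because $\bar a$ is non-negative definite.

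It then remains to estimate the commutator $\frac{2 f_k (\partial_k \bar a_{ij}) f_{ij}}{f}$. From the explicit formula for $\bar a$ in \eqref{bara}, the time-dependent piece $\mathcal{E}_{\alpha\beta}(t)$ has no $v$-gradient, so $\partial_k \bar a_{ij} = 2 v_k \delta_{ij} - v_i \delta_{jk} - v_j \delta_{ik}$. A short contraction yields the algebraic identity $f_k (\partial_k \bar a_{ij}) f_i f_j \equiv 0$, so in the commutator one may legitimately replace $f_{ij}$ by the ``traceless-type'' Hessian $f_{ij} - f_i f_j / f$. After this substitution, a Cauchy--Schwarz/Young inequality in the $\bar a^{1/2}$-weighted metric, applied between the vector $f_k$ and the matrix $f_{ij} - f_i f_j/f$, splits the commutator into (i) a quadratic piece in $f_{ij}-f_i f_j/f$ that is absorbed by the non-positive Bochner term above, and (ii) a purely quadratic-in-$\nabla f$ remainder $\le 4d\,|\nabla f|^2/f$. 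Collecting these estimates delivers the claim.

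The main obstacle I expect is the last inequality: matching the explicit constant $4d$ requires pairing the tensor structure of $\partial_k \bar a_{ij}$ precisely against $\bar a_{ij}(f_{ki}-f_k f_i/f)(f_{kj}-f_k f_j/f)$ so that the Young parameter is chosen exactly; after using $\bar a_{ij} \ge d\,\delta_{ij} - \|\mathcal{E}(t)\|\,\delta_{ij} + (|v|^2\delta_{ij}-v_iv_j)$ (cf.~\eqref{direction temp} and \eqref{eta}) to control the residual scalar multiples of $|v|^2 u$, $|v||\nabla f|\cdot|\nabla f|/f^{1/2}$, etc., the book-keeping leading to the clean constant $4d$ is the delicate part. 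Everything else is routine differentiation and symmetry arguments.
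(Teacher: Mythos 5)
Your outline is essentially the paper's own proof, only repackaged. The identity you predict is correct: with $g_{ij}:=\partial_{ij}f-\partial_i f\,\partial_j f/f$ one indeed finds
\begin{equation*}
(\partial_t-\ml)\frac{|\nabla f|^2}{f}
=\frac{2}{f}\,\partial_k f\,(\partial_k\bar a_{ij})\,\partial_{ij}f
-\frac{2}{f}\,\bar a_{ij}\,g_{ki}\,g_{kj},
\end{equation*}
and your algebraic cancellation $\partial_k f\,(\partial_k\bar a_{ij})\,\partial_i f\,\partial_j f=0$ is exactly the paper's ``cancelling property''. In the paper these objects appear as $\textbf{Term}_3+\textbf{Term}_4+\textbf{Term}_5$ of \eqref{Proof terms}, and the paper's completion of the square with the extra $-2\partial_\alpha f$ is precisely your Young inequality applied at equality; so there is no genuinely different route here.

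The step you leave as ``delicate book-keeping'' is the whole content of the lemma, and as you describe it in the last paragraph it would not close. After replacing $\partial_{ij}f$ by $g_{ij}$, the commutator is $\frac{4}{f}\sum_{\alpha,\beta}\partial_\alpha f\,\big(v_\alpha g_{\beta\beta}-v_\beta g_{\alpha\beta}\big)$, and Young must be applied to this \emph{contracted antisymmetric combination}: it is bounded by $\frac{1}{f}\sum_{\alpha,\beta}\big(v_\alpha g_{\beta\beta}-v_\beta g_{\alpha\beta}\big)^2+\frac{4}{f}\sum_{\alpha,\beta}(\partial_\alpha f)^2$, where the first piece is $\le\frac{1}{f}\sum_{\alpha,\beta,\gamma}\big(v_\alpha g_{\beta\gamma}-v_\beta g_{\alpha\gamma}\big)^2=\frac{2}{f}\big(|v|^2|g|^2-|gv|^2\big)$, i.e.\ exactly the Laplace--Beltrami part of the Bochner term (the remaining $(d-\me_{\alpha\alpha}(t))\delta_{\alpha\beta}$ part of $\bar a$ is $\ge\eta\,\Id\ge0$ by \eqref{eta} and is simply dropped), while the second piece is exactly $4d\,|\nabla f|^2/f$. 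In a correct execution no residual terms of the type $|v|^2|\nabla f|^2/f$ appear, and none may appear: such terms cannot be absorbed by $4d\,|\nabla f|^2/f$, and the Bochner term is quadratic in $g$, not in $\nabla f$, so the ellipticity lower bound on $\bar a$ that you invoke cannot control them either. Equivalently, an entrywise Cauchy--Schwarz of the form $|\partial_k f|\,|\partial_k\bar a_{ij}|\,|g_{ij}|\lesssim|v|\,|\nabla f|\,|g|$ is too lossy and ruins the $v$-independence of the constant; only the antisymmetric pairing above produces the clean $4d$, which is what the paper's explicit square completion accomplishes.
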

\begin{lemma}\label{evo1log}
$(\partial_t-\ml) f\log f \leq -\eta \frac{|\nabla f|^2}{f}+d(d-1)f,$ where $\eta$ is the small positive constant given in \eqref{eta}.
\end{lemma}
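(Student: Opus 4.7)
My plan is to compute $(\partial_t - \ml)(f\log f)$ directly via the chain rule, exploiting the non-divergence form $\ml g = \bar{a}_{\alpha\beta}\partial_{\alpha\beta}g + d(d-1)g$ derived in Section \ref{formulation}. Differentiating gives $\partial_t(f\log f) = (1+\log f)\partial_t f$ and the key identity
$$\partial_{\alpha\beta}(f\log f) = (1+\log f)\partial_{\alpha\beta}f + \frac{\partial_\alpha f\,\partial_\beta f}{f}.$$
Substituting and using the equation $\partial_t f = \bar{a}_{\alpha\beta}\partial_{\alpha\beta}f + d(d-1)f$, the transport pieces multiplied by $(1+\log f)$ recombine to give $(1+\log f)d(d-1)f$; the two $d(d-1)f\log f$ contributions (one from $\partial_t$, one from the zeroth-order part of $\ml$) cancel, leaving the clean identity
$$(\partial_t - \ml)(f\log f) \;=\; d(d-1)f \;-\; \bar{a}_{\alpha\beta}\frac{\partial_\alpha f\,\partial_\beta f}{f}.$$

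The remaining and only substantive step is to establish the pointwise uniform ellipticity bound $\bar{a}_{\alpha\beta}\xi_\alpha\xi_\beta \geq \eta |\xi|^2$ for all $\xi\in\R^d$, with the same $\eta$ from \eqref{eta}. Using the explicit formula \eqref{bara} together with the diagonalization $\mathcal{E}_{\alpha\beta}(t) = \delta_{\alpha\beta}(1+D_{\alpha\alpha}e^{-4dt})$ from \eqref{direction temp} and \eqref{cross conservation}, I would split
$$\bar{a}_{\alpha\beta}\xi_\alpha\xi_\beta \;=\; (d-1)|\xi|^2 \;+\; \big(|v|^2|\xi|^2 - (v\cdot\xi)^2\big) \;-\; \sum_\alpha D_{\alpha\alpha}e^{-4dt}\xi_\alpha^2.$$
The middle term is non-negative by Cauchy-Schwarz. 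For the last term, the bound \eqref{eta}, namely $D_{\alpha\alpha}\leq d-1-\eta$ combined with $e^{-4dt}\leq 1$, yields $\sum_\alpha D_{\alpha\alpha}e^{-4dt}\xi_\alpha^2 \leq (d-1-\eta)|\xi|^2$ (the sign of $D_{\alpha\alpha}e^{-4dt}$ is irrelevant since only an upper bound is needed). This produces exactly the $\eta|\xi|^2$ lower bound.

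Applying this ellipticity estimate to $\xi = \nabla f(v)$ at each point (using $f>0$ guaranteed by Proposition \ref{lowergauss}) converts the identity above into the desired inequality $(\partial_t - \ml)(f\log f) \leq d(d-1)f - \eta\,|\nabla f|^2/f$. I do not expect any serious obstacle: the chain-rule computation must be tracked carefully to see the cancellation of the $\log f$ terms, and the ellipticity bound is a one-line consequence of the orthonormal-basis normalization already exploited in Section \ref{formulation}.
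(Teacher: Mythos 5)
Your proposal is correct and follows essentially the same route as the paper: both compute $(\partial_t-\ml)(f\log f)$ via the chain rule, cancel the $(1+\log f)(\partial_t f-\ml f)$ term using the equation, and bound the resulting quadratic form $\bar a_{\alpha\beta}\partial_\alpha f\,\partial_\beta f/f$ from below by $\eta|\nabla f|^2/f$ using the non-negativity of $|v|^2|\xi|^2-(v\cdot\xi)^2$ and the bound $d-1-e^{-4dt}D_{\alpha\alpha}\geq\eta$. The only cosmetic difference is that you phrase the final step as an explicit uniform ellipticity estimate for $\bar a$, whereas the paper bounds the two pieces of the quadratic form separately without isolating that statement.
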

For simplicity of the proof, we first prove our main estimate assuming the above two lemmas. The technical proofs of Lemma \ref{evo1} and Lemma \ref{evo1log} will be postponed to the end of this section.

Now we construct the auxiliary function to be propagated as
\begin{equation*} 
    F(t,v)=\frac{|\nabla f|^2}{f}+Cf\log f-d(d-1)Ctf-C^\prime f.
\end{equation*}
From the two lemmas above we have
\begin{equation*}
    (\partial_t-\ml)F \leq 4d\frac{|\nabla f|^2}{f}-C\eta \frac{|\nabla f|^2}{f}+d(d-1)Cf-d(d-1)Cf \leq 0,
\end{equation*}
for any $C\geq\frac{4d}{\eta}$.

The formula at $t=0$ simply reads
\begin{equation*}
    F(0,v)=\frac{|\nabla f_0|^2}{f_0}+Cf_0\log f_0-C^\prime f_0.
\end{equation*}
By the initial condition \eqref{inigauss} and \eqref{inigradient}, we choose $C$ large enough and then $C^\prime$ large enough to make $F(0,v) \leq 0$. Notice that $f$ is bounded from \cite{villani1998spatially} and that $f$ has Gaussian lower bound from Proposition \ref{lowergauss}. Hence in order to check that the auxiliary function $F$ satisfies the {\em a priori} exponential growth condition \eqref{aprioriexp}, we only need to prove the boundedness of $\nabla f$. This is obtained from \cite[Theorem 2.1]{guillen2023landau}, which is the space-homogeneous case of the previous result Henderson-Snelson-Tarfulea \cite[Theorem 1.2]{henderson2020local}. Hence by the parabolic maximum principle Theorem \ref{maximumprinciple}, we have $F \leq 0$ for any $t$, or
\begin{equation}\label{gradient logf}
    |\nabla \log f|^2 \leq Cd(d-1)t-C\log f+C^\prime.
\end{equation}
We also note that from Proposition \ref{lowergauss} we have
$$
    \log f \geq -M_2-C_2(t)|v|^2.
$$
This will give our main estimate Theorem \ref{loggradient}.

Now we present the proofs of the two technical lemmas.
\begin{proof}[Proof of Lemma \ref{evo1}]
We first list some elementary calculations.
\begin{claim}[Elementary Calculations I]\label{cal1}
 $$
        \partial_t \frac{|\nabla f|^2}{f}=-\frac{|\nabla f|^2}{f^2} \partial_t f+\frac{2}{f} \nabla f \cdot \nabla \partial_t f,
 $$    
$$\partial_\alpha \frac{|\nabla f|^2}{f}=-\frac{|\nabla f|^2}{f^2} \partial_\alpha f+\frac{2}{f} \nabla f \cdot \nabla \partial_\alpha f, $$   \begin{align*}&
\partial_{\alpha \beta} \frac{|\nabla f|^2}{f}=-\frac{|\nabla f|^2}{f^2} \partial_{\alpha \beta} f+\frac{2}{f} \nabla f \cdot \nabla \partial_{\alpha \beta} f\\
&\quad \quad +\Big(\frac{2}{f}\nabla \partial_\alpha f \cdot \nabla \partial_\beta f-\frac{2}{f^2} \partial_\beta f \nabla f \cdot \nabla \partial_\alpha f-\frac{2}{f^2} \partial_\alpha f \nabla f \cdot \nabla \partial_\beta f+\frac{2}{f^3}\partial_\alpha f \partial_\beta f |\nabla f|^2\Big).
    \end{align*}
\end{claim}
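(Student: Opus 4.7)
The plan is to establish each formula by direct differentiation, so the only tools needed are the quotient rule and the product rule applied to the ratio $|\nabla f|^2/f$, where I think of $|\nabla f|^2 = \sum_\gamma (\partial_\gamma f)^2$ as a scalar numerator and $f$ as the denominator.

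First I would handle the time derivative. Since $\partial_t$ commutes with every spatial derivative, $\partial_t |\nabla f|^2 = 2\sum_\gamma (\partial_\gamma f)(\partial_t \partial_\gamma f) = 2\nabla f \cdot \nabla \partial_t f$, and then the quotient rule immediately reproduces the first formula. The first-order spatial formula is proved by exactly the same two lines, replacing $\partial_t$ by $\partial_\alpha$ throughout (again relying on the fact that $\partial_\alpha$ commutes with the $\partial_\gamma$).

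For the second-order spatial formula, I would apply $\partial_\beta$ to the expression already established for $\partial_\alpha(|\nabla f|^2/f)$. The right-hand side there is a sum of two terms, each of which is a threefold product, so $\partial_\beta$ produces three contributions from each by the product rule. From $-\frac{|\nabla f|^2}{f^2}\partial_\alpha f$ one gets a piece where $\partial_\beta$ lands on $|\nabla f|^2$ (giving $-\frac{2}{f^2}\partial_\alpha f\,\nabla f \cdot \nabla \partial_\beta f$), a piece where it hits $1/f^2$ (giving $\frac{2}{f^3}\partial_\alpha f\,\partial_\beta f\,|\nabla f|^2$), and a piece where it hits $\partial_\alpha f$ (giving $-\frac{|\nabla f|^2}{f^2}\partial_{\alpha\beta}f$). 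From $\frac{2}{f}\nabla f \cdot \nabla \partial_\alpha f$ one analogously gets $-\frac{2}{f^2}\partial_\beta f\,\nabla f \cdot \nabla \partial_\alpha f$, $\frac{2}{f}\nabla \partial_\beta f \cdot \nabla \partial_\alpha f$, and $\frac{2}{f}\nabla f \cdot \nabla \partial_{\alpha\beta} f$. Grouping the two pieces that involve $\partial_{\alpha\beta}f$ and $\nabla \partial_{\alpha\beta}f$ into the main terms and putting the remaining four into the parenthesis reproduces the claimed identity.

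The only real obstacle is bookkeeping: one must be careful about signs and about the symmetric appearance of $\alpha,\beta$ in the mixed contributions so that the parenthesised group matches the stated expression. No conceptual input beyond the product and quotient rules is needed, so I would expect the proof to occupy only a few lines of computation per identity.
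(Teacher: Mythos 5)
Your computation is correct and is exactly the routine quotient/product-rule calculation the paper intends (the paper states these identities as elementary calculations without writing out the differentiation); applying $\partial_\beta$ to the established first-order formula and collecting the six resulting terms reproduces the stated Hessian identity. Nothing further is needed.
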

By Claim \ref{cal1} we propagate that
\begin{equation}\label{Proof terms}
\begin{aligned}
    &\Big(\partial_t-(d-1)\Delta-|v|^2 \Delta+v_\alpha v_\beta \partial_{\alpha \beta}+e^{-4dt}D_{\alpha \alpha}\partial_{\alpha \alpha}-d(d-1)\Big) \frac{|\nabla f|^2}{f}\\
    =&-\frac{|\nabla f|^2}{f^2}\Big(\partial_t f-(d-1)\Delta f-|v|^2 \Delta f+v_\alpha v_\beta \partial_{\alpha \beta}f+e^{-4dt}D_{\alpha \alpha}\partial_{\alpha \alpha}f+d(d-1)f\Big)\\
    &+\frac{2}{f}\nabla f \cdot \nabla\Big(\partial_t f-(d-1)\Delta f-|v|^2 \Delta f+v_\alpha v_\beta \partial_{\alpha \beta}f+e^{-4dt}D_{\alpha \alpha}\partial_{\alpha \alpha}f\Big)\\&-\frac{2}{f}\Big(d-1-e^{-4dt}D_{\alpha \alpha}\Big)\Big|\partial_{\alpha \beta}f-\frac{\partial_\alpha f \partial_\beta f}{f}\Big|^2\\
    &+\frac{4}{f}(v \cdot \nabla f)\Delta f-\frac{4}{f}v_\beta \partial_\alpha f \partial_{\alpha \beta}f-\frac{2}{f}|v|^2\Big|\partial_{\alpha \beta}f-\frac{\partial_\alpha f \partial_\beta f}{f}\Big|^2\\&+\frac{2}{f}v_\alpha v_\beta \Big(\nabla \partial_\alpha f \cdot \nabla \partial_\beta f-\frac{2}{f}\partial_\beta f \nabla f \cdot \nabla \partial_\alpha f+\frac{|\nabla f|^2}{f^2} \partial_\alpha f \partial_\beta f\Big)\\
=&\textbf{Term}_1+\textbf{Term}_2+\textbf{Term}_3+\textbf{Term}_4+\textbf{Term}_5.
\end{aligned}
\end{equation}

We then analyze these terms respectively.
$\textbf{Term}_1+\textbf{Term}_2$ can be simplified directly by the evolution equation of $f$, which shows that the sum of them vanishes;
$\textbf{Term}_3$ is obviously non-positive since $D_{\alpha\alpha}  \leq d-1$; The last two terms in $\textbf{Term}_4+\textbf{Term}_5$ can be changed into negative sum of square terms. 

We first expand it as
\begin{align*}
   \frac{2}{f}\sum_{\alpha,\beta,\gamma}\Big(&-v_{\gamma}^2( \p_{\alpha\beta} f)^2+2v_{\gamma}^2 \p_{\alpha\beta} f\frac{ \p_{\alpha}  f  \p_{\beta}  f}{f}-v_{\gamma}^2\frac{( \p_{\alpha}  f)^2( \p_{\beta}  f)^2}{f^2}\\
   &+v_{\alpha}v_{\beta}\p_{\alpha\gamma}f\p_{\beta\gamma}f-2v_{\alpha}v_{\beta}\frac{\p_{\alpha\gamma}f \p_{\beta} f\partial_{\gamma}f}{f}+v_{\alpha}v_{\beta}\frac{ \p_{\alpha} f \p_{\beta} f(\partial_{\gamma}f)^2}{f^2}\Big),
\end{align*}
then divide it further into three parts:
$$
    -v_{\gamma}^2( \p_{\alpha\beta} f)^2+v_{\alpha}v_{\beta}\p_{\alpha\gamma}f\p_{\beta\gamma}f
    =-\frac{1}{2}(v_{\alpha}\p_{\beta\gamma}f-v_{\beta}\p_{\alpha\gamma}f)^2,$$ $$
    -v_{\gamma}^2\frac{( \p_{\alpha}  f)^2( \p_{\beta}  f)^2}{f^2}+v_{\alpha}v_{\beta}\frac{ \p_{\alpha} f \p_{\beta} f(\partial_{\gamma}f)^2}{f^2}=-\frac{1}{2}\Big(v_{\alpha}\frac{ \p_{\beta} f\partial_{\gamma}f}{f}-v_{\beta}\frac{ \p_{\alpha} f\partial_{\gamma}f}{f}\Big)^2,$$
    \begin{align*}
    &2v_{\gamma}^2 \p_{\alpha\beta} f\frac{ \p_{\alpha}  f  \p_{\beta}  f}{f}-2v_{\alpha}v_{\beta}\frac{\p_{\alpha\gamma}f \p_{\beta} f\partial_{\gamma}f}{f}=\Big(v_{\alpha}\p_{\beta\gamma}f-v_{\beta}\p_{\alpha\gamma}f\Big)\Big(v_{\alpha}\frac{ \p_{\beta} f\partial_{\gamma}f}{f}-v_{\beta}\frac{ \p_{\alpha} f\partial_{\gamma}f}{f}\Big).
\end{align*}
Hence we are able to rewrite $\textbf{Term}_4+\textbf{Term}_5$ into a more compact form as
\begin{equation}\label{negative}
\begin{aligned}
    &\frac{4}{f}(v_{\alpha} \p_{\beta\beta} f-v_{\beta} \p_{\alpha\beta} f) \p_{\alpha} f-\frac{1}{f}\Big(v_{\alpha}\p_{\beta\gamma}f-v_{\beta}\p_{\alpha\gamma}f-v_{\alpha}\frac{ \p_{\beta} f\partial_{\gamma}f}{f}+v_{\beta}\frac{ \p_{\alpha} f\partial_{\gamma}f}{f}\Big)^2\\
=&-\frac{1}{f}\sum_{\gamma\neq \beta}\sum_{\alpha}\Big(v_{\alpha}\p_{\beta\gamma}f-v_{\beta}\p_{\alpha\gamma}f-v_{\alpha}\frac{ \p_{\beta} f\partial_{\gamma}f}{f}+v_{\beta}\frac{ \p_{\alpha} f\partial_{\gamma}f}{f}\Big)^2
\\&+\frac{4}{f}\sum_{\alpha,\beta
}(v_{\alpha} \p_{\beta\beta} f-v_{\beta} \p_{\alpha\beta} f) \p_{\alpha} f\\&-\frac{1}{f}\sum_{ \alpha,\beta}\Big(v_{\alpha}\partial_{\beta \beta}f-v_{\beta}\partial_{\alpha \beta}f-v_{\alpha}\frac{ \p_{\beta} f\partial_\beta f}{f}+v_{\beta}\frac{ \p_{\alpha} f\partial_\beta f}{f}\Big)^2.
\end{aligned}
\end{equation}
Notice that the cancelling property holds
\begin{equation*}
\sum_{\alpha,\beta
}\Big(v_{\alpha}\frac{ \p_{\beta} f \p_{\beta} f}{f}-v_{\beta}\frac{ \p_{\alpha} f \p_{\beta} f}{f}\Big) \p_{\alpha} f=\frac{|\nabla f|^2}{f}v \cdot \nabla f-\frac{|\nabla f|^2}{f}v \cdot \nabla f=0.
\end{equation*}
The last two lines  of \eqref{negative}  are then equivalent to 
\begin{equation*}
    -\frac{1}{f}\sum_{\alpha,\beta
}\Big(v_{\alpha} \p_{\beta\beta} f-v_{\beta} \p_{\alpha\beta} f-v_{\alpha}\frac{ \p_{\beta} f \p_{\beta} f}{f}+v_{\beta}\frac{ \p_{\alpha} f \p_{\beta} f}{f}-2 \p_{\alpha} f\Big)^2+4d\frac{|\nabla f|^2}{f}.
\end{equation*}
This concludes our desired propagation estimate.
\end{proof}

\begin{proof}[Proof of Lemma \ref{evo1log}]
    We also list some other elementary calculations first.
    \begin{claim}[Elementary Calculations II]\label{cal2}
    \begin{align*}
        \partial_t (f\log f) &=(1+\log f)\p_t f.\\
        \p_\alpha (f\log f) &=(1+\log f)\p_\alpha f.\\
    \p_{\alpha\beta} (f\log f)&=(1+\log f)\p_{\alpha\beta}f+\frac{\p_\alpha f\p_\beta f}{f}.
    \end{align*}
\end{claim}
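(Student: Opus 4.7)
The three identities in Claim \ref{cal2} are purely elementary calculus facts about the function $f \log f$, and the plan is to derive all of them by repeated application of the product and chain rules, treating $f$ as a $\C^2$ function of $(t,v)$ with $f>0$ (which is granted by the Gaussian lower bound from Proposition \ref{lowergauss} on the solution of the Landau-Maxwellian equation).

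First, for the time-derivative identity, I would write $\p_t(f\log f) = (\p_t f)\log f + f \cdot \p_t(\log f)$ by the product rule, and then use the chain rule $\p_t(\log f) = \p_t f / f$, so that the second term becomes $\p_t f$. Combining the two gives $(1+\log f)\p_t f$. The spatial first-derivative identity $\p_\alpha(f\log f) = (1+\log f)\p_\alpha f$ is obtained by an identical computation with $\p_\alpha$ replacing $\p_t$; since both derivations only use the product and chain rules, they are word-for-word the same.

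For the second-order spatial identity, the plan is to differentiate the first-order identity: starting from $\p_\alpha(f\log f) = (1+\log f)\p_\alpha f$, I apply $\p_\beta$ to both sides and use the product rule on the right-hand side,
\begin{equation*}
\p_\beta\bigl[(1+\log f)\p_\alpha f\bigr] = \p_\beta(1+\log f)\cdot \p_\alpha f + (1+\log f)\p_{\alpha\beta} f.
\end{equation*}
The chain rule gives $\p_\beta(1+\log f) = \p_\beta f / f$, so the first term becomes $\p_\alpha f \, \p_\beta f / f$, and the identity follows.

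There is no real obstacle here: the only point to be careful about is that the manipulations require $f>0$ pointwise so that $\log f$ and $1/f$ are well-defined and smooth, but this is exactly what Proposition \ref{lowergauss} provides (and what is implicitly assumed throughout Section \ref{estimate}). The identities are then valid pointwise on $[0,T]\times \R^d$ and can be used freely in the propagation computation of Lemma \ref{evo1log}.
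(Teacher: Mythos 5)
Your proposal is correct and matches the paper's treatment: these identities are stated as elementary calculations and follow exactly from the product and chain rules, with positivity of $f$ (guaranteed by the Gaussian lower bound) ensuring $\log f$ and $1/f$ are well-defined. Nothing further is needed.
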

By claims above we propagate that
\begin{align*}
    &\Big(\partial_t-(d-1)\Delta-|v|^2 \Delta+v_{\alpha} v_{\beta}  \p_{\alpha\beta} +e^{-4dt}D_{\alpha\alpha}  \p_{\alpha\alpha} -d(d-1)\Big) f\log f\\
    =&\Big(1+\log f\Big)\Big(\partial_t f-(d-1)\Delta f-|v|^2\Delta f+v_{\alpha}v_{\beta} \p_{\alpha\beta} f+e^{-4dt}D_{\alpha\alpha}  \p_{\alpha\alpha} f-d(d-1)f\Big)\\
    &+d(d-1)f
    -\Big(d-1-e^{-4dt}D_{\alpha\alpha} \Big)\frac{( \p_{\alpha} f)^2}{f}-\Big(|v|^2\frac{|\nabla f|^2}{f}-v_{\alpha}v_{\beta}\frac{ \p_{\alpha} f \p_{\beta} f}{f}\Big)\\
    \leq& -\eta \frac{|\nabla f|^2}{f}+d(d-1)f,
\end{align*}
where the last term before the inequality coincides with 
\begin{equation}\label{product identity}
|v|^2\frac{|\nabla f|^2}{f}-\frac{(v\cdot \nabla f)^2}{f}=\frac{1}{2f}\big(v_\alpha\p_\beta f-v_\beta\p_\alpha f\big)^2.
\end{equation}
This will give our desired estimate.
\end{proof}

\subsection{Logarithmic Hessian Estimate}

    It suffices to bound $\frac{\nabla^2 f}{f}$ by some quadratic function as in \cite{feng2023quantitative}. We will also need the following two results of propagation as in the previous proof of Theorem \ref{loggradient}.
    \begin{lemma}\label{evo2}
        $(\partial_t-\ml)\frac{|\nabla^2 f|^2}{f} \leq 16d\frac{|\nabla^2 f|^2}{f}$.
    \end{lemma}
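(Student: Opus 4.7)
The proof mirrors that of Lemma \ref{evo1}, replacing the Bernstein quantity $|\nabla f|^2/f$ by $u:=|\nabla^2 f|^2/f = \p_{\alpha\beta}f\,\p_{\alpha\beta}f/f$. The plan is to write the analogues of Claim \ref{cal1} for $u$, apply the parabolic operator $\p_t-\ml$, invoke the equation $\p_t f=\ml f$ to cancel the principal contribution, and then complete squares to absorb the third-order remainder.

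First, a direct quotient-rule computation yields
$$\p_t u = -\frac{|\nabla^2 f|^2}{f^2}\p_t f + \frac{2}{f}\p_{\alpha\beta} f\,\p_{\alpha\beta}\p_t f,$$
and analogous formulas for $\p_\gamma u$ and $\p_{\gamma\delta}u$. The $v$-Hessian of $u$ produces, alongside the principal terms, a non-negative concentration piece which, after pairing against $\bar a_{\gamma\delta}$, takes the form
$$\frac{2}{f}\bar a_{\gamma\delta}\Bigl(\p_{\gamma\alpha\beta}f - \frac{\p_\gamma f}{f}\p_{\alpha\beta}f\Bigr)\Bigl(\p_{\delta\alpha\beta}f - \frac{\p_\delta f}{f}\p_{\alpha\beta}f\Bigr).$$
This is the second-order analogue of the $\bigl|\p_{\alpha\beta}f - \p_\alpha f\,\p_\beta f/f\bigr|^2$ square appearing in the proof of Lemma \ref{evo1}, and provides the favourable sign for the forthcoming estimate.

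Second, apply $\p_t-\ml$ to $u$ and substitute $\p_t f =\ml f$. The terms encoding $\p_t f$ and $\p_{\alpha\beta}\p_t f$ are then handled through the commutator identity
$$\p_{\alpha\beta}(\bar a_{\mu\nu}\p_{\mu\nu}f) - \bar a_{\mu\nu}\p_{\mu\nu}(\p_{\alpha\beta}f) = \p_\alpha \bar a_{\mu\nu}\,\p_{\beta\mu\nu}f + \p_\beta \bar a_{\mu\nu}\,\p_{\alpha\mu\nu}f + \p_{\alpha\beta}\bar a_{\mu\nu}\,\p_{\mu\nu}f.$$
Since $\bar a_{\mu\nu}$ is a polynomial of degree two in $v$, its second derivatives are bounded constant tensors and its first derivatives are linear in $v$. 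Paired with $2\p_{\alpha\beta}f/f$ and summed over indices, the constant-coefficient piece yields a term bounded by a multiple of $u$, while the linear-in-$v$ piece produces cross terms involving third-order derivatives of $f$. Third, these cross terms are combined with the favourable square from the first step: in exact analogy with the rewriting \eqref{negative} in the proof of Lemma \ref{evo1}, the projection structure of $\bar a$ in the Landau-Maxwellian case allows one to complete squares of the type $v_\mu \p_{\nu\alpha\beta}f - v_\nu \p_{\mu\alpha\beta}f$ plus suitable gradient corrections, after which all third-derivative terms are absorbed into a non-positive leftover. Bookkeeping the constants, taking into account the additional index contractions relative to Lemma \ref{evo1}, yields the announced factor $16d = 4\cdot 4d$.

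The main obstacle is the completion of squares in the third step. With four free indices $(\alpha,\beta,\mu,\nu)$ now in play instead of two, the algebraic manipulation is noticeably more intricate than in Lemma \ref{evo1}; one must carefully group the $\p_\alpha \bar a_{\mu\nu}\p_{\beta\mu\nu}f$ terms together with the concentration squares, exploit the antisymmetry coming from the projection structure $\Pi$, and verify that the net remainder is indeed non-positive. Once this rearrangement is carried out, the bound $(\p_t-\ml)u \leq 16d\,u$ follows immediately.
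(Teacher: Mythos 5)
Your outline follows the same route as the paper's own proof: Bernstein-type quotient-rule identities for $|\nabla^2 f|^2/f$, substitution of the equation so that the principal contributions cancel, isolation of the commutator of $\nabla^2$ with $\bar a_{\mu\nu}\p_{\mu\nu}$, and a final completion of squares built on the antisymmetric combinations $v_\mu\p_{\nu\alpha\beta}f-v_\nu\p_{\mu\alpha\beta}f$. The pieces you do write down (the concentration square contracted against $\bar a$, the commutator identity, the degree count for $\bar a$) are correct. But the proposal stops exactly where the content of the lemma lies: you yourself label the four-index completion of squares, the grouping of the $\p_\gamma\bar a_{\mu\nu}\,\p_{\delta\mu\nu}f$ cross terms with the concentration squares, and the verification that the remainder is non-positive as ``the main obstacle'', and none of it is carried out. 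In the paper this is not routine bookkeeping: one needs (i) the symmetrization that rewrites the linear-in-$v$ commutator contribution as $\frac{8}{f}\big(v_\alpha\p_{\beta\beta\gamma}f-v_\beta\p_{\alpha\beta\gamma}f\big)\p_{\alpha\gamma}f$; (ii) the cancellation $\sum_{\alpha,\beta,\gamma}\big(v_\alpha\frac{\p_\beta f\,\p_{\beta\gamma}f}{f}-v_\beta\frac{\p_\alpha f\,\p_{\beta\gamma}f}{f}\big)\p_{\alpha\gamma}f=0$, which lets the gradient corrections be inserted into those cross terms for free; and (iii) the splitting of the concentration square into the indices $\theta\neq\beta$ (kept as a non-positive discard) and $\theta=\beta$, for which one completes
$$-\frac{1}{f}\sum_{\alpha,\gamma,\beta}\Big(v_\alpha\p_{\beta\beta\gamma}f-v_\beta\p_{\alpha\beta\gamma}f-v_\alpha\frac{\p_\beta f\,\p_{\beta\gamma}f}{f}+v_\beta\frac{\p_\alpha f\,\p_{\beta\gamma}f}{f}-4\p_{\alpha\gamma}f\Big)^2+16d\,\frac{|\nabla^2 f|^2}{f}.$$
The constant $16d$ is exactly the $(4\p_{\alpha\gamma}f)^2$ term summed over the free index $\beta$; it is not a ``$4\cdot 4d$'' rescaling of Lemma \ref{evo1}, and without (i)--(iii) both the non-positivity of the leftover and the value of the constant are unsupported assertions.

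A concrete sign that the accounting has not been closed: the constant-in-$v$ commutator piece is $\frac{4}{f}\big(|\Delta f|^2-|\nabla^2 f|^2\big)$, and your plan to bound it separately ``by a multiple of $u$'' and then add the $16d\,u$ produced by the square completion cannot land on the stated constant -- any crude bound of that piece (e.g.\ via $|\Delta f|^2\le d\,|\nabla^2 f|^2$) would push the total strictly above $16d\,u$, so this term has to be handled jointly with the cross terms inside the rearrangement you have deferred (as the paper does when it absorbs the first part of $\textbf{Term}_4$). As it stands the proposal is a correct plan along the paper's lines, but the decisive algebraic identity -- the only non-mechanical step of the lemma -- is missing, so the bound $(\p_t-\ml)\frac{|\nabla^2 f|^2}{f}\le 16d\,\frac{|\nabla^2 f|^2}{f}$ is not yet proved.
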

    \begin{lemma}\label{evo2log}
        \begin{align*}
            (\partial_t-\ml)f(\log f)^2 =&2d(d-1)f\log f-(d-1-e^{-4dt}D_{\alpha\alpha} )\frac{2}{f}(1+\log f)( \p_{\alpha} f)^2\\
            &-\frac{2}{f}(1+\log f)(v_{\alpha} \p_{\beta} f-v_{\beta} \p_{\alpha} f)^2.
        \end{align*}
    \end{lemma}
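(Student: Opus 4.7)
The plan is a direct, Bernstein-style computation entirely parallel to the proof of Lemma \ref{evo1log} for $f\log f$, now applied to the nonlinear function $\phi(f):=f(\log f)^2$. There is no maximum-principle argument at this stage; we are simply producing the identity that will later be combined with Lemma \ref{evo2} to control $\nabla^2\log f$.

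First I would record the chain-rule data. Setting $\phi(x)=x(\log x)^2$ gives $\phi'(x)=(\log x)^2+2\log x$ and $\phi''(x)=2(1+\log x)/x$, which, in the style of Claim \ref{cal2}, yields
\begin{align*}
\p_t\big(f(\log f)^2\big) &= \big((\log f)^2+2\log f\big)\p_t f,\\
\p_\alpha\big(f(\log f)^2\big) &= \big((\log f)^2+2\log f\big)\p_\alpha f,\\
\p_{\alpha\beta}\big(f(\log f)^2\big) &= \big((\log f)^2+2\log f\big)\p_{\alpha\beta} f+\frac{2(1+\log f)}{f}\,\p_\alpha f\,\p_\beta f.
\end{align*}

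Next, using the simplified form \eqref{simplified}, i.e.\ $\ml=\bar a_{\alpha\beta}\p_{\alpha\beta}+d(d-1)$, I would substitute the above into $(\p_t-\ml)\big(f(\log f)^2\big)$ and group the $\big((\log f)^2+2\log f\big)$ factor, obtaining
\[
((\log f)^2+2\log f)\big(\p_t f-\bar a_{\alpha\beta}\p_{\alpha\beta}f\big)-\frac{2(1+\log f)}{f}\bar a_{\alpha\beta}\p_\alpha f\,\p_\beta f-d(d-1)f(\log f)^2.
\]
Since $f$ solves $\p_t f=\bar a_{\alpha\beta}\p_{\alpha\beta}f+d(d-1)f$, the first parenthesis reduces to $d(d-1)f$, so the $(\log f)^2$ contributions cancel and leave the clean polynomial term $2d(d-1)f\log f$, plus the quadratic-in-gradient error $-\frac{2(1+\log f)}{f}\bar a_{\alpha\beta}\p_\alpha f\,\p_\beta f$.

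Finally I would expand $\bar a_{\alpha\beta}$ via \eqref{bara} and the explicit expression for the directional temperature \eqref{direction temp}. In the diagonal basis this gives
\[
\bar a_{\alpha\beta}\p_\alpha f\,\p_\beta f=\big(d-1-e^{-4dt}D_{\alpha\alpha}\big)(\p_\alpha f)^2+\big(|v|^2|\nabla f|^2-(v\cdot\nabla f)^2\big),
\]
and the last bracket is rewritten using identity \eqref{product identity} as $\tfrac12(v_\alpha\p_\beta f-v_\beta\p_\alpha f)^2$. Substituting back produces exactly the identity stated in Lemma \ref{evo2log}.

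The only real obstacle is bookkeeping: the diagonal piece $D_{\alpha\alpha}\delta_{\alpha\beta}$ must not be contracted away (there is no sum on $\alpha$ inside $D_{\alpha\alpha}$), while the geometric piece $|v|^2\delta_{\alpha\beta}-v_\alpha v_\beta$ is fully contracted against $\p_\alpha f\,\p_\beta f$. Getting the Einstein-summation factor right when converting $|v|^2|\nabla f|^2-(v\cdot\nabla f)^2$ into the squared-commutator form via \eqref{product identity} is the only place where a sign or factor-of-two error could creep in, and this is exactly the same manipulation that closed the proof of Lemma \ref{evo1log}.
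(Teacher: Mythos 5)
Your proposal is essentially identical to the paper's proof: the same chain-rule identities as in Claim \ref{cal4}, the same cancellation of the $\big((\log f)^2+2\log f\big)$-bracket using the equation $\p_t f=\ml f$, and the same splitting of $\bar a_{\alpha\beta}$ from \eqref{bara} into the diagonal piece $(d-1-e^{-4dt}D_{\alpha\alpha})\delta_{\alpha\beta}$ and the geometric piece handled via \eqref{product identity}. The only caveat is the one you yourself flag: with the full Einstein double sum, \eqref{product identity} yields the coefficient $\tfrac{1}{f}(1+\log f)$ rather than the stated $\tfrac{2}{f}(1+\log f)$ on the squared-commutator term, a factor-of-two bookkeeping ambiguity already present in the paper's own statement and harmless for the subsequent estimates, since only the sign and structure of that term are used.
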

For simplicity of the proof, we first prove our main estimate assuming the above two lemmas. The technical proofs of Lemma \ref{evo2} and Lemma \ref{evo2log} will be postponed to the end of this subsection.
Recall in the proof of Lemma \ref{evo1}, if we keep $\textbf{Term}_3$ in \eqref{Proof terms} then by the Cauchy-Schwarz inequality we get 
\begin{align*}
    (\partial_t-\ml) \frac{|\nabla f|^2}{f} &\leq 4d\frac{|\nabla f|^2}{f}-\frac{2}{f}(d-1-e^{-4dt}D_{\alpha\alpha} )\Big| \p_{\alpha\beta} f-\frac{ \p_{\alpha} f \p_{\beta} f}{f}\Big|^2\\
    &\leq 4d\frac{|\nabla f|^2}{f}-\eta \frac{|\nabla^2 f|^2}{f}+(d-1-e^{-4dt}D_{\alpha\alpha} )\frac{2}{f}\frac{|\nabla f|^2}{f^2}( \p_{\alpha} f)^2.
\end{align*}
Also the identity \eqref{product identity} in the proof of Lemma \ref{evo1log} implies
\begin{equation}\label{flogf}
\begin{aligned}
    (\partial_t-\ml) f\log f
    \leq d(d-1)f-(d-1-e^{-4dt}D_{\alpha\alpha} )\frac{( \p_{\alpha} f)^2}{f}-\frac{1}{2f}(v_{\alpha} \p_{\beta} f-v_{\beta} \p_{\alpha} f)^2.
\end{aligned}
\end{equation}
Notice that for large enough $A \geq 4+4\sup( \log f)_+$, which exists since the solution of the Landau-Maxwellian equation is bounded, as explained in Remark \ref{classical solution}, we have
\begin{align*}
    &(\partial_t-\ml)\big(-f(\log f)^2+(2d(d-1)t+A)f\log f\big)\\
    \leq &-2d(d-1)f\log f+(d-1-e^{-4dt}D_{\alpha\alpha} )\frac{2}{f}( \p_{\alpha} f)^2(1+\log f)\\&+\frac{2+2\log f}{f}(v_{\alpha} \p_{\beta} f-v_{\beta} \p_{\alpha} f)^2+2d(d-1)f\log f+(2d^2(d-1)^2t+d(d-1)A)f\\&-(d-1-e^{-4dt}D_{\alpha \alpha})\frac{2}{f}(\partial_\alpha f)^2(d(d-1)t+\frac{A}{2})-\frac{2d(d-1)t+A}{2f}(v_{\alpha} \p_{\beta} f-v_{\beta} \p_{\alpha} f)^2\\
    \leq &(d-1-e^{-4dt}D_{\alpha\alpha} )\frac{2}{f}( \p_{\alpha} f)^2\log f+(2d^2(d-1)^2t+d(d-1)A)f.
\end{align*}
Also recall the inequality \eqref{gradient logf} we obtain in the proof of Theorem \ref{loggradient} that
\begin{equation}\label{logft}
   \frac{1}{C} \frac{|\nabla f|^2}{f^2}+\log f \leq d(d-1)t+\frac{C^\prime}{C}, 
\end{equation}
for any sufficiently large $C$ and then large enough $C^\prime$. Hence by using $d-1-e^{-4dt}D_{\alpha\alpha}\geq \eta$ again, it holds that
\begin{align*}
    &(\partial_t-\ml)\Big(-f(\log f)^2+(2d(d-1)t+A)f\log f+\frac{1}{C}\frac{|\nabla f|^2}{f}\Big)\\
    \leq &(d-1-e^{-4dt}D_{\alpha\alpha})\big(\frac{4d}{C\eta}+2d(d-1)t+\frac{2C^\prime}{C}\big)\frac{(\p_\alpha f)^2}{f}\\&-\frac{\eta}{C}\frac{|\nabla^2f|^2}{f}+(2d^2(d-1)^2t+d(d-1)A)f.
\end{align*}
Recall \eqref{flogf} to get
$$
\begin{aligned}
&(\p_t-\ml)\Big(\big(\frac{4d}{C\eta}+2d(d-1)t+\frac{2C^\prime}{C}\big) f\log f \Big)\\
    \leq &-(d-1-e^{-4dt}D_{\alpha\alpha} )\Big(\frac{4d}{C\eta}+2d(d-1)t+\frac{2C^\prime}{C}\Big)\frac{( \p_{\alpha} f)^2}{f}\\&+
    \Big(\frac{4d^2(d-1)}{C\eta}+2d^2(d-1)^2t+\frac{2d(d-1)C^\prime}{C}\Big)f+2d(d-1)f\log f.
\end{aligned}    
$$
Then combining with Lemma \ref{evo2}, we evaluate the following quantity to obtain
$$
\begin{aligned}
&(\p_t-\ml)\Big[-f(\log f)^2+\big(2d(d-1)t+A\big)f\log f+\frac{1}{C}\frac{|\nabla f|^2}{f}+\big(\frac{4d}{C\eta}+2d(d-1)t+\frac{2C^\prime}{C}\big)\\
&\qquad \qquad f\log f+\frac{\eta}{16dC}\frac{|\nabla^2 f|^2}{f} \Big]\\
    &\leq \Big(\frac{4d^2(d-1)}{C\eta}+4d^2(d-1)^2t+d(d-1)A+\frac{2d(d-1)C^\prime}{C}\Big)f+2d(d-1)f\log f\\
&\leq \Big(6d^2(d-1)^2t+\frac{4d^2(d-1)}{C\eta}+d(d-1)A+\frac{4d(d-1)C^\prime}{C}\Big)f,
\end{aligned}
$$
where we use that $2d(d-1)f\log f\leq 2d^2(d-1)^2tf+\frac{2d(d-1)C^\prime}{C}f$ by \eqref{logft}.
In terms of the computation above, we construct a new auxiliary function by adding terms compensating the right-hand side above to let $
    (\partial_t  -\ml)F \leq 0$, which reads as
\begin{align*}
    F(t,v)=&-f(\log f)^2+\big(2d(d-1)t+A\big)f\log f+\frac{1}{C}\frac{|\nabla f|^2}{f}\\
    &+\big(\frac{4d}{C\eta}+2d(d-1)t+\frac{2C^\prime}{C} \big)f\log f+\frac{\eta}{16dC}\frac{|\nabla^2 f|^2}{f}\\&-\Big(3d^2(d-1)^2t^2+d(d-1)At+\frac{4d^2(d-1)}{C\eta}t+\frac{4d(d-1)C^\prime}{C}t+C^{\prime\prime}\Big)f\\=&\frac{\eta}{16dC}\frac{|\nabla^2 f|^2}{f}-f(\log f)^2+\Big(4d(d-1)t+A+\frac{4d}{C\eta}+\frac{2C^\prime}{C}\Big)f\log f+\frac{1}{C}\frac{|\nabla f|^2}{f}\\
    &-\Big(3d^2(d-1)^2t^2+d(d-1)At+\frac{4d^2(d-1)}{C\eta}t+\frac{4d(d-1)C^\prime}{C}t+C^{\prime\prime}\Big)f.
\end{align*}
The initial case holds 
\begin{equation*}
    F(0,v)=\frac{\eta}{16dC}\frac{|\nabla^2 f_0|^2}{f_0}-f_0(\log f_0)^2+\Big(A+\frac{4d}{C\eta}+\frac{2C^\prime}{C}\Big)f_0\log f_0+\frac{1}{C}\frac{|\nabla f_0|^2}{f_0}-C^{\prime\prime}f_0.
\end{equation*}
By the initial condition \eqref{inigradient} \eqref{inihessian}, we choose one-by-one $C, C^\prime, C^{\prime\prime}$ large enough to make $F(0,v) \leq 0$. Also the {\em a priori} exponential growth condition \eqref{aprioriexp} is satisfied similar to the logarithmic gradient estimate, with the boundedness of $\nabla^2 f$ is also obtained from \cite[Theorem 1.2]{henderson2020local} and \cite[Theorem 2.1]{guillen2023landau}. Now we apply the parabolic maximum principle Theorem \ref{maximumprinciple}, which implies $F \leq 0$ for any $t$, or
\begin{equation*}
    \frac{|\nabla^2 f|^2}{f^2} \leq C\Big((\log f)^2-(1+t)\log f+|\nabla \log f|^2+(1+t+t^2)\Big)
\end{equation*}
for some $C=C(d,\eta, A)$. We also note that from Proposition \ref{lowergauss} and \cite{villani1998spatially} we have
\begin{equation*}
    -M_2-C_2(t)|v|^2 \leq \log f \leq M_1.
\end{equation*}
This will give our main estimate Theorem \ref{loghessian}.

Now we present the proofs of the two technical lemmas.
\begin{proof}[Proof of Lemma \ref{evo2}]
We again start with some elementary calculations. 
\begin{claim}[Elementary Calculations III]\label{cal3}
$$
        \partial_t \frac{|\nabla^2 f|^2}{f}=-\frac{|\nabla^2f|^2}{f^2}\partial_t f+\frac{2}{f}\nabla^2f:\nabla^2\partial_t f.$$ $$
         \p_{\alpha}  \frac{|\nabla^2 f|^2}{f}=-\frac{|\nabla^2f|^2}{f^2} \p_{\alpha}  f+\frac{2}{f}\nabla^2f:\nabla^2 \p_{\alpha}  f.$$
\begin{align*}
\p_{\alpha\beta}  \frac{|\nabla^2 f|^2}{f}=&-\frac{|\nabla^2f|^2}{f^2} \p_{\alpha\beta}  f+\frac{2}{f}\nabla^2f:\nabla^2 \p_{\alpha\beta}  f\\
        &+\frac{2}{f^3}|\nabla^2f|^2 \p_{\alpha} f \p_{\beta} f+\frac{2}{f}\nabla^2 \p_{\alpha} f:\nabla^2 \p_{\beta} f\\
        &-\frac{2}{f^2}\nabla^2f:\nabla^2 \p_{\beta} f \p_{\alpha} f-\frac{2}{f^2}\nabla^2f:\nabla^2 \p_{\alpha} f \p_{\beta} f.
\end{align*}
\end{claim}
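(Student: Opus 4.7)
The statement in Claim \ref{cal3} is a set of three differential identities for $\frac{|\nabla^2 f|^2}{f}$, viewed as a function of $(t,v)$. My proposal is to prove all three by direct calculus, treating $|\nabla^2 f|^2 = \sum_{\gamma,\delta} (\partial_{\gamma\delta} f)^2$ as a scalar and applying the quotient and product rules termwise. Nothing about the Landau equation enters the argument; these are purely kinematic identities.

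First I would handle the $t$-derivative and the single spatial derivative simultaneously, since they are formally identical. Writing $D$ for either $\partial_t$ or $\partial_\alpha$, the quotient rule gives
\begin{equation*}
D\Big(\frac{|\nabla^2 f|^2}{f}\Big) = \frac{D|\nabla^2 f|^2}{f} - \frac{|\nabla^2 f|^2}{f^2}\, Df,
\end{equation*}
and expanding $D|\nabla^2 f|^2 = 2\sum_{\gamma,\delta} \partial_{\gamma\delta}f \cdot D\partial_{\gamma\delta}f = 2\,\nabla^2 f : \nabla^2 Df$ immediately recovers the first two lines of the claim. This step is routine and requires only that $D$ commutes with the spatial derivatives defining $\nabla^2 f$, which holds because the equation has smooth enough solutions (Remark \ref{classical solution}).

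For the second-order identity, the plan is to apply $\partial_\beta$ to the already-established expression for $\partial_\alpha \frac{|\nabla^2 f|^2}{f}$. The two pieces produce, respectively, the ``diagonal'' terms
\begin{equation*}
-\frac{|\nabla^2 f|^2}{f^2}\partial_{\alpha\beta}f + \frac{2}{f}\nabla^2 f : \nabla^2 \partial_{\alpha\beta}f
\end{equation*}
and three ``correction'' terms arising from differentiating $f^{-1}$, $f^{-2}$, and the factor $\nabla^2 f$ in the remaining products. Collecting these yields exactly the four extra terms on the second and third lines of the claimed formula. The symmetrized pair $-\tfrac{2}{f^2}\nabla^2 f:\nabla^2\partial_\beta f\,\partial_\alpha f -\tfrac{2}{f^2}\nabla^2 f:\nabla^2\partial_\alpha f\,\partial_\beta f$ emerges from the cross-differentiations and is manifestly symmetric in $(\alpha,\beta)$, which serves as a useful internal consistency check.

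The only mild obstacle is bookkeeping: one must keep careful track of which factor each derivative hits, and make sure the contraction $\nabla^2 f : \nabla^2 \partial_\alpha f$ (which is a scalar depending on $\alpha$) is differentiated correctly with respect to $\beta$. A clean way to avoid errors is to introduce the shorthand $g := |\nabla^2 f|^2$ and first derive $\partial_\beta (f^{-1} g) = f^{-1}\partial_\beta g - f^{-2} g\,\partial_\beta f$ and $\partial_\beta (f^{-2}g \,\partial_\alpha f) = f^{-2}\partial_\beta g \,\partial_\alpha f - 2f^{-3}g\,\partial_\alpha f\,\partial_\beta f + f^{-2}g\,\partial_{\alpha\beta}f$, then substitute $\partial_\beta g = 2\,\nabla^2 f : \nabla^2 \partial_\beta f$ at the end. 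With this organization the proof is essentially a one-line quotient-rule computation followed by a short but careful second application of the same rule.
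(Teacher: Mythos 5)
Your proposal is correct and is exactly the calculation the paper intends: the paper states Claim \ref{cal3} as an elementary computation without further proof, and your quotient/product-rule derivation (first-order identities via the common operator $D$, then one more application of $\partial_\beta$ with the substitution $\partial_\beta|\nabla^2 f|^2=2\,\nabla^2 f:\nabla^2\partial_\beta f$) reproduces all the claimed terms, including the symmetric pair in $(\alpha,\beta)$.
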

By Claim \ref{cal3} we propagate that
\begin{align*}
    &\Big(\partial_t-(d-1)\Delta-|v|^2\Delta+v_{\alpha}v_{\beta} \p_{\alpha\beta} +e^{-4dt}D_{\alpha\alpha}  \p_{\alpha\alpha} -d(d-1)\Big)\frac{|\nabla^2f|^2}{f}\\
    =&-\frac{|\nabla^2f|^2}{f^2}\Big(\partial_tf-(d-1)\Delta f-|v|^2\Delta f+v_{\alpha}v_{\beta} \p_{\alpha\beta} f+e^{-4dt}D_{\alpha\alpha}  \p_{\alpha\alpha} f+d(d-1)f\Big)\\
    &+\frac{2}{f}\nabla^2f:\nabla^2\Big(\partial_tf-(d-1)\Delta f-|v|^2\Delta f+v_{\alpha}v_{\beta} \p_{\alpha\beta} f+e^{-4dt}D_{\alpha\alpha}  \p_{\alpha\alpha} f\Big)\\
    &-\frac{2}{f}(d-1-e^{-4dt}D_{\alpha\alpha} )\Big|\p_{\alpha \beta \gamma}f-\frac{ \p_{\alpha} f\p_{\beta\gamma}f}{f}\Big|^2\\
    &+\frac{4}{f}\Big(|\Delta f|^2+2v_{\alpha} \p_{\alpha\beta} f\Delta \p_{\beta} f-|\nabla^2f|^2-2v_{\alpha}\p_{\beta\gamma}f\p_{\alpha \beta \gamma}f\Big)-\frac{2}{f}|v|^2\Big|\p_{\alpha \beta \gamma}f-\frac{ \p_{\alpha} f\p_{\beta\gamma}f}{f}\Big|^2\\&+\frac{2}{f}v_{\alpha}v_{\beta}\Big(\nabla^2 \p_{\alpha} f:\nabla^2 \p_{\beta} f-\frac{2}{f} \p_{\alpha} f\nabla^2f:\nabla^2 \p_{\beta} f+\frac{|\nabla^2f|^2}{f^2} \p_{\alpha} f \p_{\beta} f\Big)\\
=&\textbf{Term}_1+\textbf{Term}_2+\textbf{Term}_3+\textbf{Term}_4+\textbf{Term}_5.
\end{align*}
Similarly as what we did in the previous subsection, we can see the sum of first two terms $\textbf{Term}_1+\textbf{Term}_2$ vanishes; $\textbf{Term}_3$ is obviously non-positive since $D_{\alpha\alpha}  \leq d-1$.

To deal with $\textbf{Term}_4+\textbf{Term}_5$, we expand the sum of $\textbf{Term}_5$ and the second part of $\textbf{Term}_4$ together into sum of square as
\begin{equation}\label{negative2}
    -\frac{1}{f}\Big(v_{\alpha}\partial_{\beta \gamma \theta}f-v_{\beta}\partial_{\alpha \gamma \theta}f-v_{\alpha}\frac{ \p_{\beta} f\p_{\gamma \theta}f}{f}+v_{\beta}\frac{ \p_{\alpha} f\p_{\gamma \theta}f}{f}\Big)^2.
\end{equation}
 This again allows us to combine with the first part of $\textbf{Term}_4$, which satisfies
\begin{align*}
    &\frac{4}{f}(|\Delta f|^2+2v_{\alpha} \p_{\alpha\beta} f\Delta \p_{\beta} f-|\nabla^2f|^2-2v_{\alpha}\p_{\beta\gamma}f\p_{\alpha \beta \gamma}f)\\
    \leq &\frac{8}{f}\big(v_{\alpha}\partial_{\beta \beta \gamma}f-v_{\beta}f\p_{\alpha \beta \gamma}f\big)\p_{\alpha\gamma}f .
\end{align*}
Note also the cancelling property
\begin{equation*}
    \sum_{\alpha,\beta,\gamma}\Big(v_{\alpha}\frac{ \p_{\beta} f\p_{\beta\gamma}f}{f}-v_{\beta}\frac{ \p_{\alpha} f\p_{\beta\gamma}f}{f}\Big)\p_{\alpha\gamma}f=0.
\end{equation*}
Hence we extract terms with $\theta=\beta$ from \eqref{negative2}  to obtain
\begin{equation*}
\begin{aligned}
&\textbf{Term}_4+\textbf{Term}_5\\\leq &-\frac{1}{f}\sum_{\alpha,\gamma,\beta\neq\theta}\Big(v_{\alpha}\partial_{\beta \gamma \theta}f-v_{\beta}\partial_{\alpha \gamma \theta}f-v_{\alpha}\frac{ \p_{\beta} f\p_{\gamma \theta}f}{f}+v_{\beta}\frac{ \p_{\alpha} f\p_{\gamma \theta}f}{f}\Big)^2\\
    &-\frac{1}{f}\sum_{\alpha,\gamma,\beta}\Big(v_{\alpha}\partial_{\beta \beta \gamma}f-v_{\beta}\p_{\alpha \beta \gamma}f-v_{\alpha}\frac{ \p_{\beta} f\p_{\beta\gamma}f}{f}+v_{\beta}\frac{ \p_{\alpha} f\p_{\beta\gamma}f}{f}-4\p_{\alpha\gamma}f\Big)^2\\&+16d\frac{|\nabla^2 f|^2}{f}.
\end{aligned}
\end{equation*}
Summarizing all estimates above, we arrive at our desired propagation estimate.
\end{proof}

\begin{proof}[Proof of Lemma \ref{evo2log}]
    We also list some elementary calculations first as following claims.
    \begin{claim}[Elementary Calculations IV]\label{cal4}
    \begin{align*}
        \partial_t (f(\log f)^2) &=((\log f)^2+2\log f)\partial_t f,\\
         \p_{\alpha}  (f(\log f)^2) &=((\log f)^2+2\log f) \p_{\alpha}  f,\\
         \p_{\alpha\beta}  (f(\log f)^2) &=((\log f)^2+2\log f) \p_{\alpha\beta}  f+\frac{2}{f}(1+\log f) \p_{\alpha} f \p_{\beta} f.
    \end{align*}
\end{claim}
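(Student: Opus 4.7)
The plan is to derive all three identities as a single application of the chain rule to the composition $g \circ f$, where $g \colon (0,\infty) \to \mathbb{R}$ is defined by $g(u) = u(\log u)^2$. First I would differentiate $g$ as a one-variable function: the product rule gives $g'(u) = (\log u)^2 + 2\log u$, and a further differentiation yields $g''(u) = \frac{2(1+\log u)}{u}$. Positivity of $f$, needed to make $\log f$ well-defined and $g \circ f$ of class $\C^2$, is already available from the Gaussian lower bound in Proposition \ref{lowergauss}, and the regularity $f \in \C^1([0,T],\C^2(\R^d))$ ensures that all pointwise derivatives below are classical.

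Once $g'$ and $g''$ are in hand, the first two identities follow immediately: only one derivative falls on $g(f)$ in each, so the chain rule produces $\p_t(g(f)) = g'(f)\,\p_t f$ and $\p_\alpha(g(f)) = g'(f)\,\p_\alpha f$, which after substituting the formula for $g'$ become the stated expressions. For the third identity I would differentiate the expression for $\p_\alpha(g(f))$ once more in $v_\beta$, using the product rule to split into a $g'(f)\,\p_{\alpha\beta}f$ piece and a $g''(f)\,\p_\alpha f\,\p_\beta f$ piece. Substituting $g'' = \frac{2(1+\log u)}{u}$ at $u = f$ then gives the mixed term with coefficient $\frac{2(1+\log f)}{f}$ exactly as stated.

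I do not expect a genuine obstacle here. These formulas are completely parallel to the ones already recorded in Claim \ref{cal2} for $f \log f$, the only difference being that $g(u) = u(\log u)^2$ replaces $u \log u$, so $g'$ and $g''$ are shifted accordingly. They are being collected only as preparation for the next step in the proof of Lemma \ref{evo2log}, namely assembling $(\p_t - \ml)(f(\log f)^2)$ by applying the parabolic operator term-by-term and recognizing the resulting sums of square-type quantities, in the same spirit as the calculation already carried out for $f\log f$ in Lemma \ref{evo1log}.
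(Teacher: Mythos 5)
Your proposal is correct, and the computation of $g'(u)=(\log u)^2+2\log u$ and $g''(u)=\frac{2(1+\log u)}{u}$ followed by the chain rule reproduces all three displayed identities exactly; this is precisely the (elementary) calculation the paper leaves implicit, with the observation on positivity of $f$ via Proposition \ref{lowergauss} being a reasonable but standard point. No issues.
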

By Claim \ref{cal4} we propagate that
\begin{align*}
    &\Big(\partial_t-(d-1)\Delta-|v|^2 \Delta+v_\alpha v_\beta \partial_{\alpha\beta}+e^{-4dt}D_{\alpha\alpha}\partial_{\alpha\alpha}-d(d-1)\Big) \big(f(\log f)^2\big)\\
    =&((\log f)^2+2\log f)(\partial_t f-(d-1)\Delta f-|v|^2\Delta f+v_\alpha v_\beta\partial_{\alpha\beta}f\\&+e^{-4dt}D_{\alpha\alpha}\partial_{\alpha\alpha}f-d(d-1)f)-\frac{2}{f}(1+\log f)(v_\alpha\partial_\beta f-v_\beta\partial_\alpha f)^2\\
    &+2d(d-1)f\log f
    -(d-1-e^{-4dt}D_{\alpha\alpha})\frac{2}{f}(1+\log f)(\partial_\alpha f)^2\\
    =&2d(d-1)f\log f-(d-1-e^{-4dt}D_{\alpha\alpha} )\frac{2}{f}(1+\log f)( \p_{\alpha} f)^2\\&-\frac{2}{f}(1+\log f)(v_{\alpha} \p_{\beta} f-v_{\beta} \p_{\alpha} f)^2.
\end{align*}
This will give our desired estimate.
\end{proof}

\section{Proof of the Law of Large Numbers}\label{poc}

This section is devoted to the proof of the important Law of Large Numbers estimate Theorem \ref{LLN}.  The main idea is to take advantage of the some important quantities of Landau-Maxwellian equation, namely the conservation laws \eqref{conservation} with \eqref{cross conservation}, and the explicit form of the directional temperature \eqref{direction temp}: for $t\in[0,T]$,
$$\int_{\R^d}v_\alpha f(t,v)\ud v=0 , \qquad\int_{\R^d}|v|^2f(t,v)\ud v=d,$$
$$
\int_{\R^d}v_\alpha^2f(t,v)\ud v=\me_\alpha(t),\qquad \int_{\R^d}v_\alpha v_\beta f(t,v)\ud v=0,\mbox{ for } \alpha\neq \beta.
$$

We expand the matrix term into entries by using the equations above as
$$
\begin{aligned}
  &a\ast f(v^{i})-a(v^{i}-v^{j}) \\
 =& \Id\big(\int_{\R^3}|v^i-z|^2f(z)\ud z-|v^i-v^j|^2\big)\\&-\big(\int_{\R^3}(v^i-z)\otimes (v^i-z)f(z)\ud z-(v^i-v^j)\otimes (v^i-v^j)\big)
\\
=&\Id\big(d+2v^i\cdot v^j-|v^j|^2\big)-\big(\int_{\R^d}z\otimes zf(z)\ud z+v^i\otimes v^j+v^j\otimes v^i-v^j\otimes v^j\big),\\
\end{aligned}
$$
which can be also written in terms of  components as
\begin{equation}\label{component a}
a\ast f(v^{i})-a(v^{i}-v^{j})=\big(d+2v^i\cdot v^j-|v^j|^2\big)\delta_{\alpha\beta}-\big(\mathcal{E_\alpha}(t)\delta_{\alpha\beta}+v^i_\alpha v^j_\beta+v^j_\alpha v^i_\beta-v^j_\alpha v^j_\beta\big). 
\end{equation}

The quantity we need to estimate now turns into
$$
\begin{aligned}
&\int_{\R^{dN}} F_N\frac{1}{N}\sum_{i=1}^{N}\Big|a\ast f(v^{i})-\frac{1}{N}\sum_{j=1}^{N}a(v^{i}-v^{j})
\Big|^2\ud V \\
= & \sum_{\alpha,\beta=1}^d  \int_{\R^{dN}} F_N \frac{1}{N^3}\sum_{i,j,k=1}^{N}\Big[a\ast f(v^{i})-a(v^{i}-v^{j})
\Big]_{\alpha\beta}\Big[a\ast f(v^{i})-a(v^{i}-v^{k})
\Big]_{\alpha\beta} \ud V\\
= & \sum_{\alpha,\beta=1}^d  \int_{\R^{dN}} F_N \frac{1}{N^3}\sum_{i\neq j\neq k}\Big[a\ast f(v^{i})-a(v^{i}-v^{j})
\Big]_{\alpha\beta}\Big[a\ast f(v^{i})-a(v^{i}-v^{k})
\Big]_{\alpha\beta} \ud V\\&+\sum_{\alpha,\beta=1}^d  \int_{\R^{dN}} F_N \frac{1}{N^3}\sum_{\mathcal{S} }\Big[a\ast f(v^{i})-a(v^{i}-v^{j})
\Big]_{\alpha\beta}\Big[a\ast f(v^{i})-a(v^{i}-v^{k})
\Big]_{\alpha\beta} \ud V,
 \end{aligned}
$$
where the index set $\mathcal{S}$ contains those triple indices where at least two of the indices are the same. Now the last term is bounded using 4-th order estimate in Proposition \ref{moment estimate} as
\begin{equation}\label{mathcalS}
\begin{aligned}
 &\sum_{\alpha,\beta=1}^d  \int_{\R^{dN}} F_N(t,V) \frac{1}{N^3}\sum_{\mathcal{S} }\Big[a\ast f(v^{i})-a(v^{i}-v^{j})
\Big]_{\alpha\beta}\Big[a\ast f(v^{i})-a(v^{i}-v^{k})
\Big]_{\alpha\beta} \ud V\\
\leq &\frac{C}{N} \int_{\R^{dN}} F_N(t,V)(1+|v^1|^4)\ud V\leq \frac{C}{N}(1+\mm_4(0)e^{\frac{8t}{N}})\lesssim\frac{1+\mm_4(0)}{N}e^{\frac{8T}{N}}.
\end{aligned}
\end{equation}
We expand the first term by the equality \eqref{component a} as
\begin{equation}\label{expansion}
\begin{aligned}
 & \sum_{\alpha,\beta=1}^d  \int_{\R^{dN}} F_N \frac{1}{N^3}\sum_{i\neq j\neq k}\Big[a\ast f(v^{i})-a(v^{i}-v^{j})
\Big]_{\alpha\beta}\Big[a\ast f(v^{i})-a(v^{i}-v^{k})
\Big]_{\alpha\beta} \ud V  \\     = &  \sum_{\alpha,\beta=1}^d  \int_{\R^{dN}} F_N\frac{1}{N^3}\sum_{i\neq j\neq k}\Big[\big(d+2v^i\cdot v^j-|v^j|^2\big)\delta_{\alpha\beta}-\big(\mathcal{E_\alpha}(t)\delta_{\alpha\beta}+v^i_\alpha v^j_\beta+v^j_\alpha v^i_\beta-v^j_\alpha v^j_\beta\big)\Big]\\
&\times \Big[\big(d+2v^i\cdot v^k-|v^k|^2\big)\delta_{\alpha\beta}-\big(\mathcal{E_\alpha}(t)\delta_{\alpha\beta}+v^i_\alpha v^k_\beta+v^k_\alpha v^i_\beta-v^k_\alpha v^k_\beta\big)\Big] \ud V\\
=:&I_{\alpha=\beta}+I_{\alpha\neq\beta},
 \end{aligned}
\end{equation}
where we separate the sum into the diagonal part $\alpha=\beta$ and the off-diagonal part $\alpha\neq\beta$ as follows:
$$
\begin{aligned}
I_{\alpha=\beta}:= &  \sum_{\alpha=1}^d   \int_{\R^{dN}} F_N\frac{1}{N^3}\sum_{i\neq j\neq k} \Big(d-|v^j|^2+2v^i\cdot v^j-2v^i_\alpha v^j_\alpha+(v^j_\alpha)^2-\mathcal{E_\alpha}(t)\Big)\\&\times\big(d-|v^k|^2+2v^i\cdot v^k-2v^i_\alpha v^k_\alpha+(v^k_\alpha )^2-\mathcal{E_\alpha}(t)\big)\ud V.
\end{aligned}
$$
We expand further to get
$$
\begin{aligned}
I_{\alpha=\beta}=&\sum_{\alpha=1}^d  \int_{\R^{dN}} \frac{F_N}{N^3}\sum_{i\neq j\neq k} \Big[\big(d-|v^j|^2\big)\big(d-|v^k|^2\big)+4\big(v^i\cdot v^j-v^i_\alpha v^j_\alpha\big)\big(v^i\cdot v^k-v^i_\alpha v^k_\alpha\big)\\&+\big((v^j_\alpha)^2-\mathcal{E_\alpha}(t)\big)\big((v^k_\alpha )^2-\mathcal{E_\alpha}(t)\big)+\big(d-|v^j|^2\big)\big(v^i\cdot v^k-v^i_\alpha v^k_\alpha\big)\\
&+2\big(d-|v^j|^2\big)\big((v^k_\alpha )^2-\mathcal{E_\alpha}(t)\big)+4\big(v^i\cdot v^j-v^i_\alpha v^j_\alpha\big)\big((v^k_\alpha )^2-\mathcal{E_\alpha}(t)\big)\Big]\ud V,
\end{aligned}
$$
and
$$
\begin{aligned}
I_{\alpha\neq\beta}&:=   \sum_{\alpha\neq\beta} \int_{\R^{dN}} F_N\frac{1}{N^3}\sum_{i\neq j\neq k} \Big(v^i_\alpha v^j_\beta+v^j_\alpha v^i_\beta-v^j_\alpha v^j_\beta\Big)\Big(v^i_\alpha v^k_\beta+v^k_\alpha v^i_\beta-v^k_\alpha v^k_\beta\Big)\ud V\\ 
&=   \sum_{\alpha\neq\beta} \int_{\R^{dN}} F_N\frac{1}{N^3}\sum_{i\neq j\neq k} \Big[2(v^i_\alpha )^2v^j_\beta v^k_\beta-2v^i_\alpha v^k_\alpha v^i_\beta v^j_\beta+v^j_\alpha v^j_\beta v^k_\alpha v^k_\beta\Big]\ud V,
\end{aligned}
$$
where the last equality is due to the symmetry  and the permutation. 

We aim to prove that
$$
I_{\alpha=\beta}+I_{\alpha\neq\beta}\leq \frac{C(T)}{N},
$$
which the dependency on time $T$ can be made explicitly later.
According to the Liouville equation in the weak form \eqref{weak form}, for any test function $\Phi\in C^2(\R^{dN})$ with polynomial growth, we have the following identity
$$
\begin{aligned}
&\frac{\ud }{\ud t} \int_{\R^{dN}}F_N(t,V)\Phi(V)\ud V\\
=&\sum_{i=1}^{N}\int_{\R^{dN}}\nabla^2_{v^{i}} \Phi(V): \frac{1}{N}\sum_{j=1}^Na(v^{i}-v^j) F_N(t,V)\ud V \\&+2\sum_{i=1}^{N} \int_{\R^{dN}} \nabla_{v^{i}} \Phi(V)\cdot \frac{1}{N}\sum_{j=1}^Nb(v^{i}-v^j)F_N(t,V)\ud V\\
=&\frac{1}{2N}\sum_{i,j=1}^{N}\int_{\R^{dN}}\big(\nabla^2_{v^{i}}\Phi(V)+\nabla^2_{v^{j}}\Phi(V)  \big): a(v^{i}-v^j) F_N(t,V)\ud V \\&+\frac{1}{N}\sum_{i,j=1}^{N} \int_{\R^{dN}} \big(\nabla_{v^{i}} \Phi(V)-\nabla_{v^{j}} \Phi(V)\big)\cdot b(v^{i}-v^j)F_N(t,V)\ud V,
\end{aligned}
$$
together with the convention $a(0)=0$ and $b(0)=0$ leads to that
\begin{equation}\label{weak form Liouville}
\begin{aligned}
&\frac{\ud }{\ud t} \int_{\R^{dN}}F_N(t,V)\Phi(V)\ud V\\
=&\frac{1}{2N}\sum_{i\neq j}\int_{\R^{dN}}\big(\nabla^2_{v^{i}}\Phi(V)+\nabla^2_{v^{j}}\Phi(V)  \big): a(v^{i}-v^j) F_N(t,V)\ud V \\&+\frac{1}{N}\sum_{i\neq j} \int_{\R^{dN}} \big(\nabla_{v^{i}} \Phi(V)-\nabla_{v^{j}} \Phi(V)\big)\cdot b(v^{i}-v^j)F_N(t,V)\ud V.
\end{aligned}    
\end{equation}
We observe that when $\vphi(v)=|v|^2$ or $\vphi(v)=v_\alpha$ for any $\alpha=1,\ldots,d$ for any $\alpha\neq\beta$, the identity holds for any $v^i\neq v^j\in \R^d$,
\begin{equation}\label{weak identity}
\frac{1}{2}\big(\nabla^2\vphi(v^i)+\nabla^2\vphi(v^j)  \big): a(v^{i}-v^j)+\big(\nabla \vphi(v^i)-\nabla \vphi(v^j)\big)\cdot b(v^{i}-v^j)=0.
\end{equation}
As a consequence, the following lemma holds.
\begin{lemma}\label{energy}
The average kinetic energy over particles is conservative, i.e.
$$\frac{\ud }{\ud t}\int_{\R^{dN}}F_N(t,V)\frac{1}{N}\sum_{i=1}^{N}|v^i|^2\ud V=0;$$
hence for any $t>0$, 
$$
\int_{\R^{dN}}F_N(t,V)\frac{1}{N}\sum_{i=1}^{N}|v^i|^2\ud V=d.
$$
\end{lemma}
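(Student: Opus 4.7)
The plan is to apply the weak formulation of the Liouville equation \eqref{weak form Liouville} to the specific test function $\Phi(V)=\tfrac{1}{N}\sum_{i=1}^{N}|v^{i}|^{2}$. Since this $\Phi$ decouples as a sum of one-particle functions $\varphi(v^{i})/N$ with $\varphi(v)=|v|^{2}$, its derivatives are immediate: $\nabla_{v^{k}}\Phi(V)=\tfrac{2}{N}v^{k}$ and $\nabla^{2}_{v^{k}}\Phi(V)=\tfrac{2}{N}\Id$. Substituting these into \eqref{weak form Liouville} collapses every $N$-particle gradient to a two-particle combination involving only the pair $(v^{i},v^{j})$.

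The next step is to invoke the pointwise identity \eqref{weak identity}, which the paper notes holds for $\varphi(v)=|v|^{2}$ (and separately for $\varphi(v)=v_{\alpha}$). Concretely, with $\nabla^{2}\varphi=2\Id$ and $\nabla\varphi(v)=2v$ one has
\begin{equation*}
\tfrac{1}{2}\bigl(\nabla^{2}\varphi(v^{i})+\nabla^{2}\varphi(v^{j})\bigr):a(v^{i}-v^{j})+\bigl(\nabla\varphi(v^{i})-\nabla\varphi(v^{j})\bigr)\cdot b(v^{i}-v^{j})=2\,\mathrm{tr}\,a(v^{i}-v^{j})+2(v^{i}-v^{j})\cdot b(v^{i}-v^{j}),
\end{equation*}
and using $\mathrm{tr}\,a(z)=(d-1)|z|^{2}$ together with $z\cdot b(z)=-(d-1)|z|^{2}$ one sees that this expression vanishes identically. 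Summing over the pairs $i\ne j$ in \eqref{weak form Liouville} therefore yields $\tfrac{\ud}{\ud t}\int F_{N}\Phi\,\ud V=0$, which is the desired conservation.

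Finally, to identify the constant value, I use the initial condition $F_{N}(0)=f_{0}^{\otimes N}$ together with the tensorization and the normalization $\int_{\R^{d}}|v|^{2}f_{0}(v)\,\ud v=d$ from \eqref{conservation}. By exchangeability,
\begin{equation*}
\int_{\R^{dN}} F_{N}(0,V)\,\tfrac{1}{N}\sum_{i=1}^{N}|v^{i}|^{2}\,\ud V=\int_{\R^{d}}|v|^{2}f_{0}(v)\,\ud v=d,
\end{equation*}
so the time-independence proved above gives the claimed identity for every $t>0$. No step is an obstacle here; the only subtlety is the justification of using a test function of quadratic (hence unbounded) growth in \eqref{weak form Liouville}, which is controlled by the finite second (indeed fourth) moment bound of Proposition \ref{moment estimate} so that all integrations by parts are legitimate.
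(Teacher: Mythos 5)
Your proof is correct and follows the same route the paper intends: plug $\Phi(V)=\tfrac{1}{N}\sum_i|v^i|^2$ into the weak formulation \eqref{weak form Liouville} and invoke the pointwise cancellation identity \eqref{weak identity}, which the paper explicitly records just before stating Lemma \ref{energy} (``As a consequence, the following lemma holds''). You usefully make explicit the verification $\mathrm{tr}\,a(z)=(d-1)|z|^2$ and $z\cdot b(z)=-(d-1)|z|^2$, as well as the identification of the constant via $F_N(0)=f_0^{\otimes N}$ and the normalization \eqref{conservation}, all of which the paper leaves implicit.
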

Now we first let $\Phi(V)=\frac{1}{N^2}\sum_{k\neq l}\vphi_1(v^k)\vphi_2(v^l)$, we get
$$
\nabla_{v^i}\Phi(V)=\frac{1}{N^2}\sum_{l\neq i}\nabla \vphi_1(v^i)\vphi_2(v^l)+\frac{1}{N^2}\sum_{k\neq i}\vphi_1(v^k)\nabla \vphi_2(v^i);
$$
$$
\nabla_{v^i}^2\Phi(V)=\frac{1}{N^2}\sum_{l\neq i}\nabla ^2\vphi_1(v^i)\vphi_2(v^l)+\frac{1}{N^2}\sum_{k\neq i}\vphi_1(v^k)\nabla ^2\vphi_2(v^i).
$$
Substitute these into \eqref{weak form Liouville} to obtain that
$$
\begin{aligned}
&\frac{\ud }{\ud t} \int_{\R^{dN}}F_N(t,V)\Phi(V)\ud V\\
=&\frac{1}{2N^3}\sum_{i\neq j}\int_{\R^{dN}}\big(\nabla^2\vphi_1(v^i)\sum_{l\neq i}\vphi_2(v^l)+\nabla^2\vphi_2(v^i)\sum_{k\neq i}\vphi_1(v^k)  \big): a(v^{i}-v^j) F_N(t,V)\ud V \\&+\frac{1}{2N^3}\sum_{i\neq j}\int_{\R^{dN}}\big(\nabla^2\vphi_1(v^j)\sum_{l\neq i}\vphi_2(v^l)+\nabla^2\vphi_2(v^j)\sum_{k\neq j}\vphi_1(v^k)  \big): a(v^{i}-v^j) F_N(t,V)\ud V\\&+\frac{1}{N^3}\sum_{i\neq j} \int_{\R^{dN}} \big(\nabla \vphi_1(v^i)\sum_{l\neq i}\vphi_2(v^l)+\nabla \vphi_2(v^i)\sum_{k\neq i}\vphi_1(v^k)\big)\cdot b(v^{i}-v^j)F_N(t,V)\ud V\\&-\frac{1}{N^3}\sum_{i\neq j} \int_{\R^{dN}} \big(\nabla \vphi_1(v^j)\sum_{l\neq j}\vphi_2(v^l)+\nabla\vphi_2(v^j)\sum_{k\neq i}\vphi_1(v^k)\big)\cdot b(v^{i}-v^j)F_N(t,V)\ud V.
\end{aligned}    
$$
If both $\vphi_1$ and $\vphi_2$ satisfy the identity \eqref{weak identity}, then the majority of the terms above can be compensated by the symmetry property, and the remaining terms become
$$
\begin{aligned}
&\frac{\ud }{\ud t} \int_{\R^{dN}}F_N(t,V)\Phi(V)\ud V\\
=&\frac{1}{2N^3}\sum_{i\neq j}\int_{\R^{dN}}\big(\nabla ^2\vphi_1(v^i)\vphi_2(v^j)+\nabla ^2\vphi_2(v^i)\vphi_1(v^j)  \big): a(v^{i}-v^j) F_N(t,V)\ud V\\ 
+&\frac{1}{2N^3}\sum_{i\neq j}\int_{\R^{dN}}\big(\nabla ^2\vphi_1(v^j)\vphi_2(v^i)+\nabla ^2\vphi_2(v^j)\vphi_1(v^i)  \big): a(v^{i}-v^j) F_N(t,V)\ud V\\ &+\frac{1}{N^3}\sum_{i\neq j} \int_{\R^{dN}} \big(\nabla \vphi_1(v^i)\vphi_2(v^j)+\nabla \vphi_2(v^i)\vphi_1(v^j)\big)\cdot b(v^{i}-v^j)F_N(t,V)\ud V\\&-\frac{1}{N^3}\sum_{i\neq j} \int_{\R^{dN}} \big(\nabla \vphi_1(v^j)\vphi_2(v^i)+\nabla \vphi_2(v^j)\vphi_1(v^i)\big)\cdot b(v^{i}-v^j)F_N(t,V)\ud V.
\end{aligned}
$$
If we take $\vphi_1$ and $\vphi_2$ to be $|v|^2$ or $v_\alpha$ for any $\alpha=1,\ldots,d$, then we have
\begin{equation}\label{vphi12}
\begin{aligned}
\frac{\ud}{\ud t}\int_{\R^{dN}}F_N(t,V)\frac{1}{N^2}\sum_{k\neq l}\vphi_1(v^k)\vphi_2(v^l)\ud V\leq& \frac{C}{N} \int_{\R^{dN}} F_N(t,V)(1+|v^1|^4)\ud V\\\lesssim&\frac{1+\mm_4(0)}{N}e^{\frac{8t}{N}}.
\end{aligned}
\end{equation}
In the same spirit of the cancellation, if we take more test functions $\vphi_1$, $\vphi_2$, $\vphi_3$ and $\vphi_4$ to be $|v|^2$ or $v_\alpha$ for any $\alpha=1,\ldots,d$, then we have the estimate 
\begin{equation}\label{vphi123}
\begin{aligned}
\frac{\ud}{\ud t}\int_{\R^{dN}}F_N(t,V)\frac{1}{N^3}\sum_{k\neq l\neq m}\vphi_1(v^k)\vphi_2(v^l)\vphi_3(v^m)\ud V\sim \frac{C}{N}e^{\frac{C't}{N}};
\end{aligned}
\end{equation}
and
\begin{equation}\label{vphi1234}
\begin{aligned}
\frac{\ud}{\ud t}\int_{\R^{dN}}F_N(t,V)\frac{1}{N^4}\sum_{k\neq l\neq m\neq n}\vphi_1(v^k)\vphi_2(v^l)\vphi_3(v^m)\vphi_4(v^n)\ud V\sim \frac{C}{N}e^{\frac{C't}{N}},
\end{aligned}
\end{equation}
where constants depend on the moment estimate.
As directly consequences of \eqref{vphi12}, \eqref{vphi123} and \eqref{vphi1234}, since each integrand has no more than 4th-order in velocity,  the following statements hold:
\begin{claim}\label{alphacross}
For any $t>0$,  it holds
$$
\int_{\R^{dN}}F_N(t,V)\frac{1}{N^2}\sum_{i\neq j}v^i_\alpha v^j_\alpha\ud V\lesssim\frac{1+\mm_4(0)}{N}\int_0^t e^{\frac{8s}{N}}\ud s.
$$
\end{claim}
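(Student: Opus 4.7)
The plan is to apply the weak-form identity \eqref{weak form Liouville} with the test function
$$
\Phi(V)=\frac{1}{N^2}\sum_{i\neq j}v^i_\alpha v^j_\alpha,
$$
which corresponds exactly to the choice $\vphi_1=\vphi_2=v_\alpha$ in the general framework set up just above the claim. Since $\nabla^2 v_\alpha\equiv 0$ and $\nabla v_\alpha=e_\alpha$ is constant, the cancellation identity \eqref{weak identity} is trivially satisfied, which is precisely the hypothesis that produces the small-in-$N$ bound \eqref{vphi12}. Thus
$$
\bigg|\frac{\ud}{\ud t}\int_{\R^{dN}}F_N(t,V)\frac{1}{N^2}\sum_{i\neq j}v^i_\alpha v^j_\alpha\ud V\bigg|\lesssim \frac{1+\mm_4(0)}{N}e^{8t/N}.
$$

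Next, I would evaluate the initial value. Under the tensorized initial condition $F_N(0)=f_0^{\otimes N}$ and the normalization \eqref{conservation}, in particular $\int_{\R^d}v_\alpha f_0(v)\ud v=0$, we have by exchangeability
$$
\int_{\R^{dN}}F_N(0,V)\frac{1}{N^2}\sum_{i\neq j}v^i_\alpha v^j_\alpha\ud V=\frac{N(N-1)}{N^2}\bigg(\int_{\R^d}v_\alpha f_0(v)\ud v\bigg)^2=0.
$$

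Integrating the differential bound in time from $0$ to $t$ and using the vanishing initial data yields the claim directly. I do not expect a genuine obstacle here: all the hard work is already absorbed in the derivation of \eqref{vphi12}, and the only two things to check are (i) that $v_\alpha$ kills the $a$- and $b$-terms in \eqref{weak identity}, which is immediate, and (ii) that the first-moment conservation law of $f_0$ makes the initial correlation vanish exactly, which is again immediate. The mild subtlety worth double-checking is the bookkeeping of the $\frac{1}{N^2}\sum_{i\neq j}$ normalization in \eqref{vphi12}, to make sure the constant hidden in $\lesssim$ does not pick up extra factors of $N$, but the structure of \eqref{weak form Liouville} guarantees that each residual term after the cancellation carries a factor $1/N$ and an at-most fourth-order polynomial in the velocities, controlled by Proposition \ref{moment estimate}.
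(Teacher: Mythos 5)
Your proposal is correct and follows essentially the same route as the paper: the paper obtains this claim (and its siblings) as a direct consequence of \eqref{vphi12} with $\vphi_1=\vphi_2=v_\alpha$, noting that \eqref{weak identity} holds trivially for $v_\alpha$, and then integrates in time using the vanishing initial correlation coming from $F_N(0)=f_0^{\otimes N}$ and $\int v_\alpha f_0=0$. Your bookkeeping of the $1/N^3\cdot N^2$ factor and the fourth-moment control via Proposition \ref{moment estimate} is exactly the mechanism behind \eqref{vphi12}, so there is no gap.
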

\begin{claim}\label{cross}
For any $t>0$,  it holds
$$
\int_{\R^{dN}}F_N(t,V)\frac{1}{N^2}\sum_{i\neq j}v^i\cdot v^j\ud V\lesssim\frac{1+\mm_4(0)}{N}\int_0^t e^{\frac{8s}{N}}\ud s.
$$
\end{claim}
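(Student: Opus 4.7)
The plan is to apply the weak form of the Liouville equation \eqref{weak form Liouville} directly with the test function
\begin{equation*}
\Phi(V) = \frac{1}{N^2}\sum_{k\neq l} v^k \cdot v^l,
\end{equation*}
which is exactly the symmetric quantity whose expectation we wish to control. Since $\Phi$ is affine in each single variable $v^i$, we have $\nabla^2_{v^i}\Phi \equiv 0$, so the entire second-order (diffusion) contribution to \eqref{weak form Liouville} vanishes and only the drift piece survives. Using $\nabla_{v^i}\Phi = \frac{2}{N^2}\sum_{k\neq i} v^k$, a direct calculation yields $\nabla_{v^i}\Phi - \nabla_{v^j}\Phi = \frac{2}{N^2}(v^j - v^i)$, which paired with $b(v^i - v^j) = -(d-1)(v^i - v^j)$ gives the identity
\begin{equation*}
\frac{\ud}{\ud t}\int_{\R^{dN}} F_N(t,V)\,\Phi(V)\,\ud V = \frac{2(d-1)}{N^3}\sum_{i\neq j}\int_{\R^{dN}} F_N(t,V)\,|v^i - v^j|^2\,\ud V.
\end{equation*}

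Next, I would control the right-hand side by the elementary bound $|v^i - v^j|^2 \leq 2(|v^i|^2 + |v^j|^2)$ together with the conservation of average kinetic energy from Lemma \ref{energy}, which gives $\int_{\R^{dN}} F_N(s,V)\sum_i|v^i|^2 \ud V = Nd$ for every $s \geq 0$. This reduces the identity above to $\frac{\ud}{\ud t}\int F_N \Phi\, \ud V \leq \frac{4d(d-1)}{N}$. Because $F_N(0) = f_0^{\otimes N}$ and the first moment of $f_0$ vanishes by \eqref{conservation}, we have
\begin{equation*}
\int_{\R^{dN}} F_N(0,V)\Phi(V)\ud V = \frac{N(N-1)}{N^2}\Bigl|\int_{\R^d} v f_0(v)\ud v\Bigr|^2 = 0,
\end{equation*}
so integrating in time yields $\int F_N(t)\Phi\, \ud V \leq \frac{4d(d-1)t}{N}$. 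Since $e^{8s/N}\geq 1$, the right-hand side is in turn bounded by $\frac{C(1+\mm_4(0))}{N}\int_0^t e^{8s/N}\ud s$ for a dimensional constant $C$, which is the stated bound.

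I do not expect a real obstacle here: the key structural feature is that $v^i \cdot v^j$ is affine in each coordinate, so the awkward second-order terms of the weak formulation vanish for free, and the remaining quadratic contribution is handled directly by the conserved energy. As an equivalently short alternative, one may simply observe that $v^i\cdot v^j = \sum_{\alpha=1}^d v^i_\alpha v^j_\alpha$ and sum Claim \ref{alphacross} over $\alpha = 1,\ldots,d$, the factor of $d$ being absorbed into the implicit constant of $\lesssim$.
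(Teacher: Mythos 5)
Your proposal is correct, and your primary argument is genuinely more elementary than the paper's. The paper obtains Claim \ref{cross} as a direct consequence of the general estimate \eqref{vphi12}: it plugs the product test function $\frac{1}{N^2}\sum_{k\neq l}\vphi_1(v^k)\vphi_2(v^l)$ with $\vphi_1,\vphi_2$ among $v_\alpha, |v|^2$ into \eqref{weak form Liouville}, cancels the bulk of the terms by symmetry and the identity \eqref{weak identity}, and bounds the leftover coinciding-index terms by the fourth moment via Proposition \ref{moment estimate}, which is exactly where the factor $(1+\mm_4(0))e^{8t/N}$ comes from; your closing remark about summing Claim \ref{alphacross} over $\alpha$ is essentially this route. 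Your main computation instead exploits the special structure of $\Phi(V)=\frac{1}{N^2}\sum_{k\neq l}v^k\cdot v^l$: since $\Phi$ is affine in each $v^i$, the diffusion contribution in \eqref{weak form Liouville} vanishes identically, and the drift contribution is exactly $\frac{2(d-1)}{N^3}\sum_{i\neq j}\int F_N|v^i-v^j|^2\ud V$, which is controlled by the conserved average kinetic energy (Lemma \ref{energy}) alone. This buys a sharper and cleaner bound, $\int F_N(t)\Phi\,\ud V\leq C d(d-1)t/N$ with a purely dimensional constant, needing neither the fourth moment nor the symmetry cancellation, and it trivially implies the stated estimate since $t\leq\int_0^t e^{8s/N}\ud s$ (your constant $4d(d-1)$ should be $8d(d-1)$ after $\sum_{i\neq j}(|v^i|^2+|v^j|^2)=2(N-1)\sum_i|v^i|^2$, but this is immaterial under $\lesssim$). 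The trade-off is that your shortcut is specific to this bilinear test function, whereas the paper's mechanism covers uniformly the whole family of claims (e.g.\ Claims \ref{energyenergy}--\ref{alphabetacrossalphacross}) where higher-degree monomials force the fourth-moment bound anyway.
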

\begin{claim}\label{energyenergy}
For any $t>0$,  it holds
$$
\int_{\R^{dN}}F_N(t,V)\frac{1}{N^2}\sum_{i\neq j}|v^i|^2|v^j|^2\ud V-d^2\lesssim\frac{1+\mm_4(0)}{N}\int_0^t e^{\frac{8s}{N}}\ud s.
$$
\end{claim}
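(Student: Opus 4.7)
The plan is to apply the general derivative bound \eqref{vphi12} to the choice $\vphi_1(v)=\vphi_2(v)=|v|^2$. First, I would verify that $\vphi(v)=|v|^2$ satisfies the cancellation identity \eqref{weak identity}: since $\nabla\vphi=2v$ and $\nabla^2\vphi=2\,\Id$, one computes
$$\tfrac{1}{2}\big(2\,\Id+2\,\Id\big):a(v^i-v^j)+\big(2v^i-2v^j\big)\cdot b(v^i-v^j)=2(d-1)|v^i-v^j|^2-2(d-1)|v^i-v^j|^2=0,$$
using $\operatorname{tr}a(z)=(d-1)|z|^2$ and $b(z)=-(d-1)z$. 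This is nothing but the particle-level energy conservation already recorded in Lemma \ref{energy}.

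Given this cancellation, \eqref{vphi12} with $\vphi_1=\vphi_2=|v|^2$ yields
$$\Big|\frac{\ud}{\ud t}\int_{\R^{dN}}F_N(t,V)\frac{1}{N^2}\sum_{i\neq j}|v^i|^2|v^j|^2\,\ud V\Big|\lesssim\frac{1+\mm_4(0)}{N}e^{\frac{8t}{N}}.$$
I would then compute the initial value using the tensorized structure $F_N(0)=f_0^{\otimes N}$ together with the normalization $\int_{\R^d}|v|^2 f_0\,\ud v=d$ from \eqref{conservation}:
$$\int_{\R^{dN}}F_N(0,V)\frac{1}{N^2}\sum_{i\neq j}|v^i|^2|v^j|^2\,\ud V=\frac{N(N-1)}{N^2}\Big(\int_{\R^d}|v|^2 f_0\,\ud v\Big)^2=d^2-\frac{d^2}{N}.$$

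Integrating the derivative bound in time and subtracting $d^2$ then yields
$$\int_{\R^{dN}}F_N(t,V)\frac{1}{N^2}\sum_{i\neq j}|v^i|^2|v^j|^2\,\ud V-d^2\leq -\frac{d^2}{N}+\frac{C(1+\mm_4(0))}{N}\int_0^t e^{\frac{8s}{N}}\,\ud s\lesssim \frac{1+\mm_4(0)}{N}\int_0^t e^{\frac{8s}{N}}\,\ud s,$$
which is the desired estimate since $-d^2/N\leq 0$. The argument is strictly parallel to the proofs of Claims \ref{alphacross} and \ref{cross}; the only point genuinely worth double-checking is the cancellation identity \eqref{weak identity} for $\vphi(v)=|v|^2$, which is just the particle-level energy conservation, so no real obstacle arises.
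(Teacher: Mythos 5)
Your proposal is correct and follows essentially the same route the paper intends: Claim~\ref{energyenergy} is stated as a direct consequence of the derivative bound \eqref{vphi12}, which you invoke with $\vphi_1=\vphi_2=|v|^2$, and you correctly verify the cancellation identity \eqref{weak identity}, evaluate the tensorized initial value as $d^2-d^2/N$, and integrate in time.
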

\begin{claim}\label{alphacrossenergy}
For any $t>0$,  it holds
$$
\int_{\R^{dN}}F_N(t,V)\frac{1}{N^3}\sum_{i\neq j\neq k}v^i_\alpha v^j_\alpha| v^k|^2\ud V\lesssim\frac{1+\mm_4(0)}{N}\int_0^t e^{\frac{8s}{N}}\ud s.
$$
\end{claim}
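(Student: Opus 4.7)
The claim is a direct application of the three test function cancellation mechanism \eqref{vphi123} to the specific choice $\vphi_1(v)=v_\alpha$, $\vphi_2(v)=v_\alpha$, $\vphi_3(v)=|v|^2$. Each of these three functions satisfies the pointwise identity \eqref{weak identity}, since $v_\alpha$ corresponds to conserved momentum and $|v|^2$ to conserved energy under the Landau--Maxwellian binary interaction. This is exactly the structural property exploited in proving \eqref{vphi12}, and repeated use of the same symmetrization yields \eqref{vphi123}.

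My plan is as follows. First, set the test function $\Phi(V)=\frac{1}{N^3}\sum_{k\neq l\neq m}v^k_\alpha v^l_\alpha|v^m|^2$ and insert it into the weak formulation \eqref{weak form Liouville} of the Liouville equation. Expanding $\nabla_{v^i}\Phi$ and $\nabla_{v^i}^2\Phi$ decomposes the time derivative into a main part and a residual part; the main part is the sum over indices $(i,j,k,l,m)$ where the interaction indices $(i,j)$ each play the role of one of the particle indices in a distinct slot, and it vanishes by virtue of \eqref{weak identity} applied separately to $v_\alpha$ and to $|v|^2$. The residuals come from configurations where the $(i,j)$ indices coincide with two of the particle indices, producing an extra $1/N$ factor and polynomial integrands in $v$ of total degree at most $4$.

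Next, I would bound these residuals using the moment estimate of Proposition \ref{moment estimate}, giving
$$
\bigg|\frac{\ud}{\ud t}\int_{\R^{dN}}F_N(t,V)\Phi(V)\ud V\bigg|\lesssim \frac{1+\mm_4(0)}{N}e^{8t/N},
$$
in perfect analogy with the derivations of Claims \ref{alphacross}--\ref{energyenergy}. The initial value is then handled by tensorization: with $F_N(0)=f_0^{\otimes N}$ and the conservation of momentum $\int_{\R^d}v_\alpha f_0(v)\ud v=0$ from \eqref{conservation}, each summand factorizes as $(\int v_\alpha f_0)(\int v_\alpha f_0)(\int |v|^2 f_0)=0$, so $\int F_N(0)\Phi\ud V=0$. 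Integrating the time-derivative bound from $0$ to $t$ gives the claim.

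The main obstacle is not conceptual but bookkeeping: confirming that in the three test function setting the residual terms really are of the advertised $1/N\times (\text{quartic in } v)$ form. Unlike the two test function case leading to \eqref{vphi12}, the number of overlap patterns grows (pairs $(i,j)$ hitting any two of the three indices $k,l,m$), and one must check that the additional term coming from $\nabla^2(|v|^2)=2\Id$ does not inflate the polynomial degree beyond what Proposition \ref{moment estimate} controls. Once this is verified, the rest follows mechanically from the identity $\int_0^t e^{8s/N}\ud s$ and the triangle inequality.
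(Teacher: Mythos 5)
Your proposal is correct and follows essentially the same route as the paper: it reads Claim \ref{alphacrossenergy} off the three-test-function cancellation \eqref{vphi123} with the choices $\vphi_1=\vphi_2=v_\alpha$, $\vphi_3=|v|^2$, using \eqref{weak identity} to kill the main part, the 4th-moment estimate Proposition \ref{moment estimate} to bound the $O(1/N)$ residuals, and the tensorization of $F_N(0)$ together with $\int v_\alpha f_0=0$ to make the initial value vanish. The paper only sketches \eqref{vphi123} and declares these claims ``direct consequences''; your write-up supplies the same argument with the bookkeeping made explicit.
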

\begin{claim}\label{crossenergy}
For any $t>0$,  it holds
$$
\int_{\R^{dN}}F_N(t,V)\frac{1}{N^2}\sum_{i\neq j\neq k}v^i\cdot v^j| v^k|^2\ud V\lesssim\frac{1+\mm_4(0)}{N}\int_0^t e^{\frac{8s}{N}}\ud s.
$$
\end{claim}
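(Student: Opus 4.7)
The plan is to view this as a direct application of the general derivative estimate \eqref{vphi123}. Writing $v^i\cdot v^j=\sum_{\alpha=1}^d v^i_\alpha v^j_\alpha$, I decompose
$$
\int_{\R^{dN}}F_N(t,V)\frac{1}{N^3}\sum_{i\neq j\neq k}v^i\cdot v^j\,|v^k|^2\ud V=\sum_{\alpha=1}^d\int_{\R^{dN}}F_N(t,V)\frac{1}{N^3}\sum_{i\neq j\neq k}v^i_\alpha v^j_\alpha|v^k|^2\ud V,
$$
so the quantity is a sum of $d$ objects of the form studied in Claim \ref{alphacrossenergy}, and in fact each summand is obtained by choosing $\vphi_1(v)=v_\alpha$, $\vphi_2(v)=v_\alpha$ and $\vphi_3(v)=|v|^2$ in the general scheme that produced \eqref{vphi123}.

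Next I verify that all three test functions satisfy the cancellation identity \eqref{weak identity}. For $\vphi(v)=v_\alpha$ the Hessian vanishes and the gradient is the constant $e_\alpha$, so both summands in \eqref{weak identity} vanish trivially. For $\vphi(v)=|v|^2$, one computes $\operatorname{tr}(a(v^i-v^j))=(d-1)|v^i-v^j|^2$ and $(v^i-v^j)\cdot b(v^i-v^j)=-(d-1)|v^i-v^j|^2$, so the two contributions exactly cancel. Consequently, inserting $\Phi(V)=\frac{1}{N^3}\sum_{k\neq l\neq m}\vphi_1(v^k)\vphi_2(v^l)\vphi_3(v^m)$ into the weak form \eqref{weak form Liouville}, the bulk terms disappear by symmetry together with this cancellation, and only the boundary contributions coming from coincidences between the interaction indices $\{i,j\}$ and the test-function indices $\{k,l,m\}$ survive. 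These residual terms are polynomials of degree at most four in the velocities.

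To bound these residuals I invoke Proposition \ref{moment estimate}, which propagates the fourth-order moment $\int F_N|v^1|^4\ud V\lesssim 1+\mm_4(0)e^{8t/N}$. Combined with H\"older's inequality and the exchangeability of $F_N$, this yields
$$
\Big|\frac{\ud}{\ud t}\int_{\R^{dN}}F_N(t,V)\,\Phi(V)\ud V\Big|\lesssim \frac{1+\mm_4(0)}{N}\,e^{8t/N},
$$
which is the content of \eqref{vphi123} in this specific setting. Since $F_N(0)=f_0^{\otimes N}$ and the indices $k,l,m$ are distinct, the initial value factorizes and equals $\bigl(\int v_\alpha f_0\bigr)^{2}\int |v|^2 f_0=0$ by the normalization \eqref{conservation}. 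Integrating the derivative bound from $0$ to $t$ and summing over $\alpha=1,\dots,d$ (absorbing the $d$-factor into the universal constant) produces the claimed estimate.

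The only mild obstacle is the bookkeeping of the boundary terms: among the $i\neq j$ and $k\neq l\neq m$ sums there are several ways two indices can coincide, and one must check that after using \eqref{weak identity} every surviving configuration produces at most an order-four velocity monomial, weighted by a combinatorial factor giving a $1/N$ gain relative to the prefactor $1/N^3$. This is exactly the mechanism already exploited for Claims \ref{alphacross}--\ref{alphacrossenergy}, so no new idea beyond the coordinatewise reduction is required.
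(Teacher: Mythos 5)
Your proposal is correct and follows essentially the same route as the paper: the paper also obtains this claim by inserting the three-fold test function built from $\vphi_1=\vphi_2=v_\alpha$ (summed over $\alpha$, since $v^i\cdot v^j=\sum_\alpha v^i_\alpha v^j_\alpha$) and $\vphi_3=|v|^2$ into the weak form \eqref{weak form Liouville}, using the cancellation \eqref{weak identity} so that only order-$1/N$ residual terms of degree at most four survive, bounding them by Proposition \ref{moment estimate}, and integrating in time from the vanishing initial value. Your explicit verification of \eqref{weak identity} for $v_\alpha$ and $|v|^2$ and of the zero initial datum simply spells out what the paper leaves implicit in deducing the claim from \eqref{vphi123}.
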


\begin{claim}\label{alphabetacrossalphacross}
For any $\alpha\neq\beta$,  it holds
$$
\int_{\R^{dN}}F_N(t,V)\frac{1}{N^4}\sum_{i\neq j\neq k\neq l}v^i_\alpha v^j_\beta v^k_\alpha v^l_\beta\ud V\lesssim\frac{1+\mm_4(0)}{N}\int_0^t e^{\frac{8s}{N}}\ud s.
$$ 
\end{claim}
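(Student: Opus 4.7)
The plan is to apply the general estimate \eqref{vphi1234} directly, specialized to the choice $\vphi_1 = \vphi_3 = v_\alpha$ and $\vphi_2 = \vphi_4 = v_\beta$, and then integrate the resulting differential inequality in time from a vanishing initial datum. First I would verify that each linear function $\vphi(v) = v_\gamma$ satisfies the weak cancellation identity \eqref{weak identity}: since $\nabla^2 \vphi \equiv 0$ the Hessian contribution vanishes, and since $\nabla \vphi = e_\gamma$ is a constant vector, the drift contribution $(\nabla \vphi(v^i) - \nabla \vphi(v^j)) \cdot b(v^i - v^j)$ is identically zero as well. Hence each of the four test functions satisfies the cancellation hypothesis used to derive \eqref{vphi1234}, and the leading $O(N^4)$ terms collapse out of the weak-form identity \eqref{weak form Liouville}.

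Applying \eqref{vphi1234} with this choice yields
$$
\frac{\ud}{\ud t}\int_{\R^{dN}}F_N(t,V)\frac{1}{N^4}\sum_{i\neq j\neq k\neq l} v^i_\alpha v^j_\beta v^k_\alpha v^l_\beta\ud V \lesssim \frac{1+\mm_4(0)}{N}\, e^{\frac{8t}{N}},
$$
where the implicit constant comes from applying the fourth-order moment bound of Proposition \ref{moment estimate} to the $O(N^3)$ index-collision residuals that survive the symmetric cancellation (each residual being a quartic monomial in the $v^p$ variables, coming from the pairing of $\nabla_{v^i}\Phi$ with the linear drift $b(v^i-v^j)$). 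The initial value is immediate: since $F_N(0) = f_0^{\otimes N}$ factorizes and the four indices $i,j,k,l$ are distinct,
$$
\int_{\R^{dN}} F_N(0,V) \frac{1}{N^4}\sum_{i\neq j\neq k\neq l} v^i_\alpha v^j_\beta v^k_\alpha v^l_\beta\ud V = \frac{|\{(i,j,k,l)\text{ distinct}\}|}{N^4}\Big(\int_{\R^d} v_\alpha f_0\ud v\Big)^2 \Big(\int_{\R^d} v_\beta f_0\ud v\Big)^2 = 0
$$
by the vanishing of the first moment of $f_0$ in \eqref{conservation}. Integrating the differential inequality from $0$ to $t$ then produces the stated bound.

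No genuine obstacle remains, since the cancellation framework is already established in the preceding derivation of \eqref{vphi12}--\eqref{vphi1234}. The only bookkeeping I would double-check is that the constants in \eqref{vphi1234} can be taken uniform in the coordinate indices $\alpha,\beta$ and depend polynomially only on $d$ and $1+\mm_4(0)$, which follows because after the symmetric cancellation the leftover sum consists of a bounded (in $N$) collection of quartic monomials in the $v^p$, each of which is controlled in expectation by Proposition \ref{moment estimate}.
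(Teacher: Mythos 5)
Your proposal is correct and takes essentially the same approach as the paper: the paper states this claim (together with Claims 5.2--5.6) as a direct consequence of the general decay estimates \eqref{vphi12}--\eqref{vphi1234}, and you have correctly identified \eqref{vphi1234} with $\vphi_1=\vphi_3=v_\alpha$, $\vphi_2=\vphi_4=v_\beta$ as the relevant instance, verified the cancellation identity \eqref{weak identity} for linear test functions, checked that the initial value vanishes by \eqref{conservation}, and integrated in time. The only minor point worth flagging is that \eqref{vphi1234} in the paper is stated loosely with an unnamed constant $C'$ in the exponential; your invocation of the precise rate $e^{8t/N}$ implicitly appeals to the $p=4$ moment growth $p(p-2)=8$ from Proposition \ref{moment estimate}, which is consistent with how the paper uses these bounds in the claims but is not spelled out in \eqref{vphi1234} itself.
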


\medskip

Next, we will deal with the terms with directional temperature. We start with the following lemma. 
\begin{lemma}\label{direction}
The average directional temperature over particles is comparable with the directional temperature for the limit equation, say
    $$
\int_{\R^{dN}}F_N(t,V)\frac{1}{N}\sum_{i=1}^{N}(v^i_\alpha)^2\ud V-\mathcal{E}_\alpha(t)\lesssim\frac{1+\mm_4(0)}{N}e^{-4dt}\int_0^t\int_0^s e^{\frac{8r}{N}}\ud r\ud s.
    $$
\end{lemma}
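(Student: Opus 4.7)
The plan is to mimic, at the particle level, the ODE derivation that produced the explicit formula $\mathcal E_\alpha(t)=1+D_{\alpha\alpha}e^{-4dt}$ for the limit equation. Set
\begin{equation*}
Y_\alpha(t):=\int_{\R^{dN}} F_N(t,V)\frac{1}{N}\sum_{i=1}^N (v^i_\alpha)^2\ud V
\end{equation*}
and plug the symmetric test function $\Phi(V)=\frac{1}{N}\sum_i (v^i_\alpha)^2$ into the weak Liouville identity \eqref{weak form Liouville}. Using $\nabla_{v^i}\Phi=\frac{2v^i_\alpha}{N}\hat e_\alpha$, $\nabla_{v^i}^2\Phi=\frac{2}{N}\hat e_\alpha\hat e_\alpha^{T}$ together with $a_{\alpha\alpha}(z)=|z|^2-z_\alpha^2$ and $b_\alpha(z)=-(d-1)z_\alpha$, and invoking Lemma \ref{energy} to replace $\int F_N\sum_i|v^i|^2$ by $Nd$, a direct algebraic simplification gives
\begin{equation*}
Y_\alpha'(t) = \frac{4d(N-1)}{N}\bigl(1-Y_\alpha(t)\bigr) - 4Z(t) + 4dZ_\alpha(t),
\end{equation*}
where $Z(t):=\int F_N\frac{1}{N^2}\sum_{i\neq j}v^i\cdot v^j\,\ud V$ and $Z_\alpha(t):=\int F_N\frac{1}{N^2}\sum_{i\neq j}v^i_\alpha v^j_\alpha\,\ud V$ are precisely the quantities already controlled by Claims \ref{cross} and \ref{alphacross}.

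Subtracting the limit ODE $\mathcal E_\alpha'(t)=4d(1-\mathcal E_\alpha(t))$ and writing $W_\alpha:=Y_\alpha-\mathcal E_\alpha$, the tensorized initial data $F_N(0)=f_0^{\otimes N}$ immediately yields $W_\alpha(0)=0$ and
\begin{equation*}
W_\alpha'(t)+\frac{4d(N-1)}{N}W_\alpha(t)=-\frac{4d}{N}\bigl(1-\mathcal E_\alpha(t)\bigr)-4Z(t)+4dZ_\alpha(t).
\end{equation*}
The key structural observation is that every source term is of size $O(1/N)$: the mismatch between the particle-level coefficient $4d(N-1)/N$ and the limit coefficient $4d$ gives the prefactor $-4d/N$ in front of the first term, while $|Z|$ and $|Z_\alpha|$ are $O(1/N)$ by the previously established claims.

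I would then solve this scalar linear ODE explicitly via the integrating factor $e^{4d(N-1)t/N}$:
\begin{equation*}
W_\alpha(t)=\int_0^t e^{-\frac{4d(N-1)}{N}(t-s)}\left[\frac{4dD_{\alpha\alpha}}{N}e^{-4ds}-4Z(s)+4dZ_\alpha(s)\right]\ud s,
\end{equation*}
where I used $1-\mathcal E_\alpha(s)=-D_{\alpha\alpha}e^{-4ds}$ with $|D_{\alpha\alpha}|\le d-1$. Inserting the bounds from Claims \ref{cross} and \ref{alphacross} for $Z,Z_\alpha$, together with the elementary estimate $e^{-4d(N-1)(t-s)/N}\le Ce^{-4d(t-s)}$ (valid in the regime $T\sim O(N)$ since $e^{4d(t-s)/N}\le e^{4d}$), and a Fubini swap in the double time integral, one recovers an upper bound of the claimed form $\frac{1+\mm_4(0)}{N}\,e^{-4dt}\int_0^t\int_0^s e^{8r/N}\ud r\ud s$.

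The main obstacle is keeping track of the genuine $e^{-4dt}$ decay, which reflects the exponential convergence of the directional temperature to its equilibrium value $1$. The convolution kernel only contributes $e^{-4d(t-s)}$, so the correct decay is produced only after one exploits the combined effect of (i) the $e^{-4ds}$ decay of the $\mathcal E_\alpha$-source, (ii) the Claim-\ref{cross}/\ref{alphacross} form of the $Z,Z_\alpha$ bounds, and (iii) the cancellation between the particle coefficient $4d(N-1)/N$ and the limit coefficient $4d$ that reduced the leading source term to order $1/N$ in the first place.
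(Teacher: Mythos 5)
Your proposal is correct and follows essentially the same route as the paper: plug $\Phi(V)=\frac{1}{N}\sum_i(v^i_\alpha)^2$ into the weak Liouville identity, recover a scalar linear ODE for $\Psi_\alpha(t)$ whose source terms are the cross quantities $Z,Z_\alpha$ controlled by Claims~\ref{cross} and~\ref{alphacross}, compare with the limit ODE $\mathcal E_\alpha'=4d(1-\mathcal E_\alpha)$, and close by Gr\"onwall using $\Psi_\alpha(0)=\mathcal E_\alpha(0)$. The only difference is purely cosmetic: you retain the exact particle coefficient $4d(N-1)/N$, which turns the $O(1/N)$ coefficient mismatch into an explicit extra source $\frac{4d}{N}D_{\alpha\alpha}e^{-4ds}$, whereas the paper absorbs this directly into the $\lesssim$; both lead to the same Gr\"onwall estimate and the same final form of the bound.
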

\begin{proof}[Proof of Lemma \ref{direction}]
  Notice that if we pick the test function $\vphi(V)=  \frac{1}{N}\sum_{i=1}^{N}(v^i_\alpha)^2$, then
  $$
\nabla_{v^i}\vphi(V)=(0,\ldots,\frac{2}{N}v^i_\alpha,\ldots,0), \quad \nabla^2_{v^i}\vphi(V)=\begin{cases}\frac{2}{N} \text{ at the } (\alpha,\alpha)\text{-th } \text{ entry},\\ 
 0 \text{  otherwise. }\end{cases}.
$$
Let 
$$
\Psi_\alpha(t)=: \int_{\R^{dN}}F_N(t,V)\frac{1}{N}\sum_{i=1}^{N}(v^i_\alpha)^2\ud V,
$$
which satisfies that $\sum_{\alpha=1}^{d}\mathcal{E}_\alpha(t)=\sum_{\alpha=1}^{d}\Psi_\alpha(t)=d$ and that $\Phi_\alpha(0)=\me_\alpha(0)$. Take the time derivative and apply \eqref{weak form Liouville} to obtain that 
$$
\begin{aligned}
\frac{\ud}{\ud t} \Psi_\alpha(t):=&\frac{2}{N^2}\sum_{i\neq j}\int_{\R^{dN}}\Big(|v^i-v^j|^{2}-(v^i-v^j)_\alpha^2\Big) F_N(t,V)\ud V\\& -(d-1)\frac{2}{N^2}\sum_{i\neq j}\int_{\R^{dN}} (v^i-v^j)_\alpha^2 F_N(t,V)\ud V\\ 
=&\frac{2}{N^2}\sum_{i\neq j}\int_{\R^{dN}}(|v^i|^2+|v^j|^2-2v^i\cdot v^j) F_N(t,V)\ud V\\&-\frac{2d}{N^2}\sum_{i\neq j} \int_{\R^{dN}} \big((v^i_\alpha)^2+(v^j_\alpha)^2-2v^i_\alpha v^j_\alpha \big)F_N(t,V)\ud V\\
=&4d-4d\Psi_\alpha(t)
\\&- \frac{4}{N^2}\sum_{i\neq j}\int_{\R^{dN}}F_N(t,V)v^i\cdot v^j\ud V+ \frac{4d}{N^2}\sum_{i\neq j}\int_{\R^{dN}}F_N(t,V)v^i_\alpha  v^j_\alpha\ud V.
\end{aligned}
$$
Notice that the following quantity with the minus sign has the same control as Claim \ref{alphacross} that 
$$
\frac{\ud}{\ud t}\Big(- \frac{4}{N^2}\sum_{i\neq j}\int_{\R^{dN}}F_N(t,V)v^i\cdot v^j\ud V\Big)\lesssim \frac{1+\mm_4(0)}{N}e^{\frac{8t}{N}}.
$$
Also recall that the directional temperature satisfies the following ODE: 
$$
\begin{aligned}
  \frac{\ud}{\ud t} \mathcal{E}_\alpha(t)=4 d-4d \mathcal{E}_\alpha(t),
\end{aligned}
$$
hence combining with Claim \ref{cross} we deduce that 
$$
\begin{aligned}
\frac{\ud}{\ud t}(\Psi_\alpha(t)-\mathcal{E}_\alpha(t))\leq&-4d(\Psi_\alpha(t)-\mathcal{E}_\alpha(t))+\frac{C(1+\mm_4(0))}{N}\int_0^t e^{\frac{8s}{N}}\ud s.
\end{aligned}
$$
Applying the Gr\"onwall's inequality and the initial identity $\Psi_\alpha(0)=\me_\alpha(0)$ implies that
$$
\begin{aligned}
&\Psi_\alpha(t)-\mathcal{E}_\alpha(t)\lesssim \frac{1+\mm_4(0)}{N}e^{-4dt}\int_0^t\int_0^s e^{\frac{8r}{N}}\ud r\ud s.
\end{aligned}
$$
\end{proof}
Now we take the test function $\Phi(V)=\frac{1}{N^2}\sum_{k\neq l}(v^k_\alpha)^2\vphi (v^l)$ for some direction $\alpha=1,...,d$ with some one-variable test function $\vphi\in C^2(\R^d)$, then we compute the gradient and Hessian of $\Phi$ respectively as
$$
\nabla_{v^i}\Phi(V)=\frac{2}{N^2}(0,\ldots,v^i_\alpha,\ldots,0)\sum_{l\neq i}\vphi(v^l)+\frac{1}{N^2}\sum_{k\neq i}(v^k_\alpha)^2\nabla \vphi(v^i);
$$
$$
\nabla_{v^i}^2\Phi(V)=\frac{1}{N^2}\sum_{k\neq i}(v^k_\alpha)^2\nabla^2\vphi(v^i)+\frac{2}{N^2}\sum_{l\neq i}\vphi(v^l)\times\begin{cases}1 \text{ at the } (\alpha,\alpha)\text{-th } \text{ entry}\\ 
 0 \text{  otherwise }\end{cases},
$$
where the last matrix is non-zero only at the $(\alpha,\alpha)$-th entry.
Substitute these into \eqref{weak form Liouville} to obtain that
$$
\begin{aligned}
\frac{\ud \Psi_\alpha(t)}{\ud t}=&\frac{\ud }{\ud t} \int_{\R^{dN}}F_N(t,V)\frac{1}{N^2}\sum_{k\neq l}(v^k_\alpha)^2\vphi (v^l)\ud V\\
=&\frac{1}{2N^3}\sum_{i\neq j}\int_{\R^{dN}}\big(\nabla^2\vphi(v^i)\sum_{k\neq i}(v^k_\alpha)^2+\nabla^2\vphi(v^j)\sum_{k\neq j}(v^k_\alpha)^2  \big): a(v^{i}-v^j) F_N(t,V)\ud V \\&+\frac{1}{N^3}\sum_{i\neq j}\int_{\R^{dN}}\big(\sum_{l\neq i}\vphi(v^l)+\sum_{l\neq j}\vphi(v^l)  \big) \big(|v^i-v^j|^2-(v^i-v^j)_\alpha^2\big)F_N(t,V)\ud V\\&+\frac{1}{N^3}\sum_{i\neq j} \int_{\R^{dN}} \big(\nabla \vphi(v^i)\sum_{k\neq i}(v^k_\alpha)^2-\nabla \vphi(v^j)\sum_{k\neq j}(v^k_\alpha)^2\big)\cdot b(v^{i}-v^j)F_N(t,V)\ud V\\&-\frac{2(d-1)}{N^3}\sum_{i\neq j} \int_{\R^{dN}} \big(v_\alpha^i\sum_{l\neq j}\vphi(v^l)-v_\alpha^j\sum_{k\neq i}\vphi(v^k)\big)(v^i-v^j)_\alpha F_N(t,V)\ud V.
\end{aligned}    
$$
If the test function $\vphi$ satisfies the identity \eqref{weak identity}, then the majority of the terms above can be compensated by symmetry, and the remaining terms become
$$
\begin{aligned}
&\frac{\ud }{\ud t} \int_{\R^{dN}}F_N(t,V)\frac{1}{N^2}\sum_{k\neq l}(v^k_\alpha)^2\vphi (v^l)\ud V\\
=&\frac{1}{2N^3}\sum_{i\neq j}\int_{\R^{dN}}\big(\nabla^2\vphi(v^i)(v^j_\alpha)^2+\nabla^2\vphi(v^j)(v^i_\alpha)^2  \big): a(v^{i}-v^j) F_N(t,V)\ud V\\ 
&+\frac{1}{N^3}\sum_{i\neq j} \int_{\R^{dN}} \big(\nabla\vphi(v^i)(v^j_\alpha)^2-\nabla \vphi(v^j)(v^i_\alpha)^2\big)\cdot b(v^{i}-v^j)F_N(t,V)\ud V\\
&+\frac{1}{N^3}\sum_{i\neq j}\int_{\R^{dN}}\big(\vphi(v^j)+\vphi(v^i)  \big) \big(|v^i-v^j|^2-(v^i-v^j)_\alpha^2\big)F_N(t,V)\ud V 
\\&-\frac{2(d-1)}{N^3}\sum_{i\neq j} \int_{\R^{dN}} \big(v_\alpha^i\vphi(v^i)-v_\alpha^j\vphi(v^j)\big)(v^i-v^j)_\alpha F_N(t,V)\ud V\\
&+\frac{2}{N^3}\sum_{l\neq i\neq j}\int_{\R^{dN}} \big(|v^i-v^j|^2-d(v^i-v^j)_\alpha^2\big)\vphi(v^l)F_N(t,V)\ud V. 
\end{aligned}
$$
If we take $\vphi$ to be $|v|^2$ or $v_\alpha$ for any $\alpha=1,\ldots,d$, then first four terms at the right-hand side can be bounded by the $4$-th moment of $F_N$ by Proposition \ref{moment estimate} with the order $1/N$; while the last term is controlled by
$$
\begin{aligned}
&\frac{2}{N^3}\sum_{l\neq i\neq j}\int_{\R^{dN}} \big(|v^i-v^j|^2-d(v^i-v^j)_\alpha^2\big)\vphi(v^l)F_N(t,V)\ud V \\
=& \frac{2}{N^3}\sum_{l\neq i\neq j}\int_{\R^{dN}} \big(|v^i|^2+|v^j|^2-2v^i\cdot v^j+2dv^i_\alpha v^j_\alpha\big)\vphi(v^l)F_N(t,V)\ud V\\
&- \frac{2d}{N^3}\sum_{i\neq j}\int_{\R^{dN}} \big((v^i_\alpha)^2+(v^j_\alpha)^2\big)\sum_{l\neq i\neq j}\vphi(v^l)F_N(t,V)\ud V\\
\leq&\frac{C(1+\mm_4(0))}{N}\int_0^t e^{\frac{8s}{N}}\ud s+4d\int_{\R^d}\vphi(v)f_0(v)\ud v\\&-4d\int_{\R^{dN}}F_N(t,V)\frac{1}{N^2}\sum_{k\neq l}(v^k_\alpha)^2\vphi (v^l)\ud V.
\end{aligned}
$$
And recall that the directional temperature satisfies the following ODE: 
$$
\begin{aligned}
  \frac{\ud}{\ud t} \mathcal{E}_\alpha(t)=4 d-4d \mathcal{E}_\alpha(t),
\end{aligned}
$$
which still holds after multiplying with some constant:
$$
\begin{aligned}
  \frac{\ud}{\ud t} \Big(\me_\alpha(t)\int_{\R^d}\vphi(v)f_0(v)\ud v\Big)=4 d\int_{\R^d}\vphi(v)f_0(v)\ud v-4d \Big(\me_\alpha(t)\int_{\R^d}\vphi(v)f_0(v)\ud v\Big),
\end{aligned}
$$
we deduce that 
$$
\begin{aligned}
  &\frac{\ud}{\ud t} \Big(\Psi_\alpha(t)-\me_\alpha(t)\int_{\R^d}\vphi(v)f_0(v)\ud v\Big)\\\leq&\frac{C(1+\mm_4(0))}{N}\int_0^t e^{\frac{8s}{N}}\ud s-4d \Big(\Psi_\alpha(t)-\me_\alpha(t)\int_{\R^d}\vphi(v)f_0(v)\ud v\Big).
\end{aligned}
$$
Applying the Gr\"onwall's inequality and the fact
$$
\Psi_\alpha(0)=\me_\alpha(0)\int_{\R^d}\vphi(v)f_0(v)\ud v
$$
implies that

\begin{equation}\label{Psialpha}
\begin{aligned}
\Psi_\alpha(t)-\me_\alpha(t)\int_{\R^d}\vphi(v)f_0(v)\ud v\lesssim \frac{1+\mm_4(0)}{N}e^{-4dt}\int_0^t\int_0^s e^{\frac{8r}{N}}\ud r\ud s.
\end{aligned}
\end{equation}

In the same spirit as the proof of \eqref{Psialpha}, we can prove that following several statements hold, which all involve the directional temperature:
\begin{claim}\label{energydirection}
For any $t>0$,  it holds
$$\int_{\R^{dN}}F_N(t,V)\frac{1}{N^2}\sum_{i\neq j}|v^i|^2(v_\alpha^j)^2 \ud V- \me_\alpha(t) d\lesssim \frac{1+\mm_4(0)}{N}e^{-4dt}\int_0^t\int_0^s e^{\frac{8r}{N}}\ud r\ud s.$$
\end{claim}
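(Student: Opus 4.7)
The strategy is to recognise this as a direct specialisation of the computation that produced \eqref{Psialpha}, rather than opening a new calculation. Specifically, I would take the same two-variable test function $\Phi(V)=\tfrac{1}{N^2}\sum_{k\neq l}(v^k_\alpha)^2\vphi(v^l)$ used in the derivation of \eqref{Psialpha}, now choosing the one-variable factor to be $\vphi(v)=|v|^2$. The key check is that $|v|^2$ satisfies the weak identity \eqref{weak identity}: this is exactly the statement that $|v|^2$ is collisionally invariant and is the content behind the energy conservation Lemma \ref{energy}. Consequently, the symmetry-based cancellations in the derivation of \eqref{Psialpha} go through verbatim, and the residual terms are controlled by Claim \ref{cross} and the $4$-th moment bound of Proposition \ref{moment estimate}.

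With this choice, define
\[
\Theta_\alpha(t):=\int_{\R^{dN}} F_N(t,V)\frac{1}{N^2}\sum_{i\neq j}|v^i|^2(v^j_\alpha)^2\ud V.
\]
Following the same manipulation as for \eqref{Psialpha}, the evolution equation reduces, after symmetrisation, to the ODE inequality
\[
\frac{\ud}{\ud t}\Theta_\alpha(t)\leq -4d\,\Theta_\alpha(t)+4d\,\me_\alpha(t)\int_{\R^d}|v|^2 f_0(v)\ud v+\frac{C(1+\mm_4(0))}{N}\int_0^t e^{\frac{8s}{N}}\ud s,
\]
where the last term accounts for the contributions of the $a$-diffusion and $b$-drift terms together with the discrepancy $\tfrac{1}{N^2}\sum_{i\neq j}v^i\cdot v^j$ estimated in Claim \ref{cross}. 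Since $\me_\alpha(t)$ itself solves $\frac{\ud}{\ud t}\me_\alpha=4d-4d\me_\alpha$, multiplying by the constant $\int_{\R^d}|v|^2 f_0(v)\ud v$ and subtracting gives
\[
\frac{\ud}{\ud t}\Big(\Theta_\alpha(t)-\me_\alpha(t)\int_{\R^d}|v|^2 f_0(v)\ud v\Big)\leq -4d\Big(\Theta_\alpha(t)-\me_\alpha(t)\int_{\R^d}|v|^2 f_0(v)\ud v\Big)+\frac{C(1+\mm_4(0))}{N}\int_0^t e^{\frac{8s}{N}}\ud s.
\]
A Gr\"onwall estimate, together with the matching initial condition $\Theta_\alpha(0)=\me_\alpha(0)\int_{\R^d}|v|^2 f_0 \ud v$ coming from the tensorised data, yields the desired bound with $\me_\alpha(t)\int_{\R^d}|v|^2 f_0(v)\ud v$ on the right-hand side. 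Finally, the normalisation \eqref{conservation} gives $\int_{\R^d}|v|^2 f_0(v)\ud v=d$, which replaces this quantity by $\me_\alpha(t)\, d$ and concludes the proof.

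I do not expect any genuine obstacle: the only delicate point is checking that $|v|^2$ is admissible in the role of $\vphi$ in the derivation of \eqref{Psialpha}, and this is precisely the weak identity \eqref{weak identity} that underlies the energy conservation. All remaining error terms are of the same structural form as those already handled in Claims \ref{cross}--\ref{crossenergy}, so no new cancellation needs to be invented.
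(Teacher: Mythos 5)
Your approach is essentially identical to the paper's: the paper indeed obtains Claim~\ref{energydirection} by running the derivation of \eqref{Psialpha} with $\vphi(v)=|v|^2$, noting $|v|^2$ satisfies \eqref{weak identity}, handling the residual cross terms via Claims~\ref{cross}, \ref{alphacrossenergy}, \ref{crossenergy} together with Proposition~\ref{moment estimate}, and closing with a Gr\"onwall comparison against $\me_\alpha(t)\int|v|^2 f_0 = \me_\alpha(t)\,d$.

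One small slip: the intermediate ODE you display,
\[
\frac{\ud}{\ud t}\Theta_\alpha(t)\leq -4d\,\Theta_\alpha(t)+4d\,\me_\alpha(t)\int_{\R^d}|v|^2 f_0(v)\ud v+\frac{C(1+\mm_4(0))}{N}\int_0^t e^{\frac{8s}{N}}\ud s,
\]
should have the constant inhomogeneous term $4d\int_{\R^d}|v|^2 f_0\,\ud v$, \emph{without} the factor $\me_\alpha(t)$; this is what falls out of the last line of the paper's expansion (the $\frac{4}{N^3}\sum_{l\neq i\neq j}|v^i|^2\vphi(v^l)$ piece, controlled via Claim~\ref{energyenergy}, contributes $4d\int\vphi\,f_0$ plus error). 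With the spurious $\me_\alpha(t)$ present, the subtraction you perform would leave an uncontrolled $O(1)$ remainder $4d(\me_\alpha(t)-1)\int|v|^2 f_0$. However, the \emph{subtracted} ODE you then write down is the correct one, so this appears to be a transcription slip rather than a flaw in the argument; fixing the intermediate display makes the two lines consistent and completes the proof exactly as in the paper.
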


\begin{claim}\label{alphacrossdirection}
For any $t>0$,  it holds
$$\int_{\R^{dN}}F_N(t,V)\frac{1}{N^3}\sum_{i\neq j\neq k}v^i_\alpha v^j_\alpha( v^k_\alpha)^2\ud V\lesssim \frac{1+\mm_4(0)}{N}e^{-4dt}\int_0^t\int_0^s e^{\frac{8r}{N}}\ud r\ud s.$$
\end{claim}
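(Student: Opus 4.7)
\textbf{Proof proposal for Claim \ref{alphacrossdirection}.}
The approach mirrors the derivations of Lemma \ref{direction} and \eqref{Psialpha}. Set
$$\Psi(t) := \int_{\R^{dN}}F_N(t,V)\,\frac{1}{N^3}\sum_{i\neq j\neq k}v^i_\alpha v^j_\alpha (v^k_\alpha)^2\,\ud V,$$
and observe that $\Psi(0)=0$: since $F_N(0)=f_0^{\otimes N}$ is tensorized and $\int_{\R^d} v_\alpha f_0(v)\,\ud v=0$ by \eqref{conservation}, each summand factors through a vanishing first moment. The plan is to derive a Gr\"onwall differential inequality for $\Psi(t)$ by applying the weak Liouville formulation \eqref{weak form Liouville} with the test function $\Phi(V)=\frac{1}{N^3}\sum_{i\neq j\neq k}v^i_\alpha v^j_\alpha (v^k_\alpha)^2$, which is a symmetric product of two linear factors $\vphi(v)=v_\alpha$ (satisfying the cancellation identity \eqref{weak identity}) and one quadratic factor $\psi(v)=v_\alpha^2$ (which does not).

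Expanding $\nabla_{v^\ell}\Phi$ and $\nabla^2_{v^\ell}\Phi$ into three pieces according to which of the three factors carries the derivative, and substituting into \eqref{weak form Liouville}, contributions from pairs $(\ell,m)$ in which both differentiations fall on the two $v_\alpha$-factors cancel via \eqref{weak identity} combined with the $i\leftrightarrow j$ symmetry of $\Phi$. Only pairs that differentiate the quadratic factor $\psi$ survive. Following the same expansion as in the proof of \eqref{Psialpha} --- using $\nabla^2(v_\alpha^2)=2e_\alpha\otimes e_\alpha$, $a_{\alpha\alpha}(z)=|z|^2-z_\alpha^2$, $b_\alpha(z)=-(d-1)z_\alpha$, together with the decomposition $|v^k-v^m|^2-d(v^k-v^m)_\alpha^2 = (|v^k|^2+|v^m|^2-2v^k\cdot v^m)-d((v^k_\alpha)^2+(v^m_\alpha)^2-2v^k_\alpha v^m_\alpha)$ --- these produce a dominant linear contribution $-4d\,\Psi(t)$, exactly as for the one-particle directional temperature in Lemma \ref{direction}, together with a remainder $R(t)$. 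The remainder consists of lower-order cross moments of the form $v^i_\alpha v^j_\alpha$, $v^i\cdot v^j$, $v^i_\alpha v^j_\alpha|v^k|^2$, $v^i\cdot v^j\,|v^k|^2$, and the four-particle term $v^i_\alpha v^j_\beta v^k_\alpha v^l_\beta$, plus diagonal corrections in which two or more indices among $\{i,j,k,\ell,m\}$ coincide.

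Every term in $R(t)$ is bounded by previously established results: the cross-moment terms are controlled by Claims \ref{alphacross}, \ref{cross}, \ref{alphacrossenergy}, \ref{crossenergy}, and \ref{alphabetacrossalphacross}; the diagonal terms carry an extra $1/N$ factor and are controlled directly by the fourth-moment estimate in Proposition \ref{moment estimate}. Each such term contributes at most $\frac{C(1+\mm_4(0))}{N}\int_0^t e^{8s/N}\,\ud s$, so $R(t)$ inherits the same bound. Combining, one arrives at
$$\frac{\ud\Psi}{\ud t}+4d\,\Psi(t) \;\le\; \frac{C(1+\mm_4(0))}{N}\int_0^t e^{8s/N}\,\ud s,$$
and Gr\"onwall's inequality applied with $\Psi(0)=0$ --- exactly as at the end of the proof of Lemma \ref{direction} --- yields the desired bound
$$\Psi(t) \;\lesssim\; \frac{1+\mm_4(0)}{N}\,e^{-4dt}\int_0^t\int_0^s e^{8r/N}\,\ud r\,\ud s.$$

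The main obstacle is the bookkeeping in the identification of the dominant $-4d\,\Psi(t)$ term versus the residual $R(t)$: since $\Phi$ is a triple product, each of $\nabla_{v^\ell}\Phi$ and $\nabla^2_{v^\ell}\Phi$ is itself a sum of three kinds of derivatives, producing many cross contributions when inserted into \eqref{weak form Liouville}. Careful use of the $i\leftrightarrow j$ symmetry of $\Phi$, the weak-identity cancellation \eqref{weak identity} for the two linear factors, and the exchangeability of $F_N$ is required to reduce the computation to the manageable list of residual monomials above, each of which matches one of the templates already estimated in the preceding claims.
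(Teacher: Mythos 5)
Your proposal follows exactly the route the paper intends: the paper states Claim \ref{alphacrossdirection} without a detailed proof, saying only that it holds ``in the same spirit as the proof of \eqref{Psialpha},'' and you have correctly fleshed out that spirit---choosing the test function $\Phi(V)=\frac{1}{N^3}\sum_{i\neq j\neq k}v^i_\alpha v^j_\alpha(v^k_\alpha)^2$, using the cancellation identity \eqref{weak identity} for the two linear $v_\alpha$ factors, extracting the dominant $-4d\Psi(t)$ term from the Hessian acting on the quadratic factor $(v^k_\alpha)^2$, bounding the residual cross-moments via the earlier Claims and the fourth-moment estimate, noting $\Psi(0)=0$ from the vanishing first moment of $f_0$, and closing with Gr\"onwall. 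This is the same argument template as Lemma \ref{direction} and \eqref{Psialpha}, as the paper indicates.

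=== END REVIEW ===
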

\begin{claim}\label{crossdirection}
For any $t>0$,  it holds
$$\int_{\R^{dN}}F_N(t,V)\frac{1}{N^3}\sum_{i\neq j\neq k}v^i\cdot v^j( v^k_\alpha)^2\ud V\lesssim \frac{1+\mm_4(0)}{N}e^{-4dt}\int_0^t\int_0^s e^{\frac{8r}{N}}\ud r\ud s.$$
\end{claim}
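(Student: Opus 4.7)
The plan is to run the weak-Liouville-plus-Gr\"onwall argument used for Lemma \ref{direction} and \eqref{Psialpha}, now for the three-body quantity
\begin{equation*}
    \Psi(t):=\int_{\R^{dN}}F_N(t,V)\Phi(V)\ud V,\qquad \Phi(V):=\frac{1}{N^3}\sum_{i\neq j\neq k}(v^i\cdot v^j)(v^k_\alpha)^2.
\end{equation*}
Since $F_N(0)=f_0^{\otimes N}$ and $\int_{\R^d}v\,f_0\ud v=0$ by \eqref{conservation}, the bilinear factor $v^i\cdot v^j$ forces $\Psi(0)=0$.

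A direct computation, exploiting that $v^i\cdot v^j$ is bilinear, yields
\begin{equation*}
    \nabla_{v^m}\Phi=\frac{2}{N^3}\sum_{\substack{j,k\neq m\\ j\neq k}}v^j(v^k_\alpha)^2+\frac{2v^m_\alpha}{N^3}\Big(\sum_{\substack{i,j\neq m\\ i\neq j}}(v^i\cdot v^j)\Big)e_\alpha,
\end{equation*}
\begin{equation*}
    \nabla^2_{v^m}\Phi=\frac{2}{N^3}\Big(\sum_{\substack{i,j\neq m\\ i\neq j}}(v^i\cdot v^j)\Big)E_{\alpha\alpha},
\end{equation*}
where $e_\alpha$ is the $\alpha$-th standard basis vector and $E_{\alpha\alpha}$ is the matrix whose only non-zero entry is a $1$ at position $(\alpha,\alpha)$. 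Inserting these into \eqref{weak form Liouville} and sorting by how the five indices $m,n,i,j,k$ can coincide splits the contributions into three groups.

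In group (i) all five indices are distinct; the derivative lands on one of the factors $v^\bullet$ inside $v^i\cdot v^j$, so for each coordinate $\beta$ the effective single-variable test function is $\vphi(v)=v_\beta$, and the $m\leftrightarrow n$ symmetry intrinsic to \eqref{weak form Liouville}, together with the exchangeability of $F_N$ and the cancellation identity \eqref{weak identity} applied to $v_\beta$ and summed over $\beta$, kills the entire group, exactly as in the derivation of \eqref{Psialpha}. Group (ii) consists of the configurations where $m$ or $n$ coincides with $k$ while the remaining three indices are distinct; a short calculation shows that the pair contribution then equals
\begin{equation*}
    \frac{2}{N^4}\bigl[|v^m-v^n|^2-d(v^m-v^n)_\alpha^2\bigr](v^i\cdot v^j),
\end{equation*}
and expanding the bracket and using exchangeability of $F_N$, the piece $-d[(v^m_\alpha)^2+(v^n_\alpha)^2]$ reorganizes into the linear dissipation $-4d\,\Psi(t)+O(\Psi/N)$, the piece $|v^m|^2+|v^n|^2$ paired with $v^i\cdot v^j$ is a three-index quantity controlled by Claim \ref{crossenergy}, and the pieces $-2v^m\cdot v^n+2dv^m_\alpha v^n_\alpha$ are four-index moments of the same flavor as Claim \ref{alphabetacrossalphacross} and are handled by an identical test-function argument. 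Group (iii) collects all remaining index collisions: each such term involves at most four distinct velocities and carries a coefficient polynomial in the $|v^l|$'s of degree $\leq 4$, hence after division by $N^3$ contributes at most $\frac{C(1+\mm_4(0))}{N}e^{8t/N}$ by Proposition \ref{moment estimate}.

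Combining the three groups yields the differential inequality
\begin{equation*}
    \frac{\ud}{\ud t}\Psi(t)+4d\,\Psi(t)\;\leq\;\frac{C(1+\mm_4(0))}{N}\int_0^t e^{8s/N}\ud s,
\end{equation*}
and Gr\"onwall's inequality with $\Psi(0)=0$ produces the claimed bound
$\Psi(t)\lesssim \frac{1+\mm_4(0)}{N}e^{-4dt}\int_0^t\int_0^s e^{8r/N}\ud r\,\ud s$.
The main obstacle is the bookkeeping in group (i): one must verify that \emph{every} contribution with the five indices all distinct, after the intrinsic $m\leftrightarrow n$ symmetrization and relabeling by exchangeability of $F_N$, collapses to an application of \eqref{weak identity} for $\vphi=v_\beta$ summed over $\beta$. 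Once that cancellation is checked, the remainder is a direct extension of the arguments giving \eqref{Psialpha} and Lemma \ref{direction}.
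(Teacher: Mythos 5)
Your proposal is correct and follows essentially the same route the paper intends (it proves this claim only ``in the same spirit as the proof of \eqref{Psialpha}''): insert the polynomial test function $\frac{1}{N^3}\sum_{i\neq j\neq k}(v^i\cdot v^j)(v^k_\alpha)^2$ into \eqref{weak form Liouville}, cancel the all-distinct contributions via \eqref{weak identity} with $\vphi=v_\beta$, extract the $-4d\Psi$ damping from the $(v^k_\alpha)^2$ factor, bound the index-coincidence terms through Proposition \ref{moment estimate} and the surviving cross terms through Claim \ref{crossenergy} and the four-index estimates of type \eqref{vphi1234}/Claim \ref{alphabetacrossalphacross}, and conclude by Gr\"onwall with $\Psi(0)=0$. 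The minor imprecisions in your write-up (absorbing the $O(e^{8t/N}/N)$ coincidence source into $\int_0^t e^{8s/N}\ud s$, and stating the Gr\"onwall output as $e^{-4dt}\int_0^t\int_0^s e^{8r/N}\ud r\ud s$ without the interior $e^{4ds}$ weight) mirror exactly the conventions the paper itself uses in Lemma \ref{direction} and \eqref{Psialpha}, so they do not constitute a gap relative to the paper's argument.
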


\begin{claim}\label{iijk}
For any $\alpha\neq\beta$ and $t>0$,  it holds

$$\int_{\R^{dN}}F_N(t,V)\frac{1}{N^3}\sum_{i\neq j
\neq k}v^i_\beta v^j_\beta(v^k_\alpha)^2\ud V\lesssim \frac{1+\mm_4(0)}{N}e^{-4dt}\int_0^t\int_0^s e^{\frac{8r}{N}}\ud r\ud s.$$    
\end{claim}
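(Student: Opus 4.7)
\textbf{Proof plan for Claim \ref{iijk}.} Following the same strategy used to derive \eqref{Psialpha} and the subsequent Claims \ref{energydirection}--\ref{crossdirection}, I would introduce
$$\Psi(t):=\int_{\R^{dN}} F_N(t,V)\frac{1}{N^3}\sum_{i\neq j\neq k} v^i_\beta v^j_\beta (v^k_\alpha)^2\ud V.$$
At $t=0$, since $F_N(0)=f_0^{\otimes N}$ factorizes and $\int v_\beta f_0=0$ by the conservation law \eqref{conservation}, we have $\Psi(0)=0$. The entire proof therefore reduces to controlling the growth of $\Psi(t)$ via an appropriate differential inequality.

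Applying the weak Liouville formulation \eqref{weak form Liouville} with test function $\Phi(V)=\frac{1}{N^3}\sum_{i\neq j\neq k} v^i_\beta v^j_\beta (v^k_\alpha)^2$, a direct computation gives
$$\nabla^2_{v^m}\Phi = \frac{2}{N^3}(e_\alpha\otimes e_\alpha)\!\!\sum_{\substack{i,j\neq m\\ i\neq j}}\!\! v^i_\beta v^j_\beta,\quad \nabla_{v^m}\Phi = \frac{2}{N^3}\bigg(e_\beta\!\!\sum_{\substack{j,k\neq m\\ j\neq k}}\!\! v^j_\beta(v^k_\alpha)^2 + v^m_\alpha e_\alpha\!\!\sum_{\substack{i,j\neq m\\ i\neq j}}\!\! v^i_\beta v^j_\beta\bigg).$$
Since the linear test function $v_\beta$ satisfies the weak identity \eqref{weak identity}, all contributions in \eqref{weak form Liouville} in which the differentiation falls only on the two $v_\beta$ factors of $\Phi$ (i.e.\ the $e_\beta$-piece of $\nabla\Phi$) cancel against each other after symmetrization in $(m,n)$, exactly as in the derivation of \eqref{Psialpha}. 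Only the contributions in which at least one differentiation hits the quadratic factor $(v^k_\alpha)^2$ survive.

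Organizing the survivors by separating the ``bulk'' part ($i,j\notin\{m,n\}$) from the ``diagonal'' part (one of $i,j$ equal to $m$ or $n$) and performing the $m\leftrightarrow n$ symmetrization yields
$$\frac{d\Psi}{dt} = \frac{2}{N^4}\sum_{m\neq n}\int_{\R^{dN}}\!\! F_N \big(|v^m-v^n|^2 - d(v^m-v^n)_\alpha^2\big)\!\!\sum_{\substack{i,j\notin\{m,n\}\\ i\neq j}}\!\! v^i_\beta v^j_\beta\ud V + \mathcal{R}(t),$$
where $\mathcal{R}(t)$ collects all diagonal corrections. Each term in $\mathcal{R}(t)$ is an expectation of a polynomial of degree at most four in the velocities summed over a set of cardinality $O(N^3)$ and divided by $N^4$; it is therefore bounded by $C(1+\mm_4(0))e^{8t/N}/N$ uniformly in $t$ by Proposition \ref{moment estimate}. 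Expanding $|v^m-v^n|^2 - d(v^m-v^n)_\alpha^2 = |v^m|^2+|v^n|^2-2v^m\cdot v^n - d\big((v^m_\alpha)^2+(v^n_\alpha)^2\big) + 2dv^m_\alpha v^n_\alpha$ and regrouping, the symmetric ``directional'' piece $-d\big((v^m_\alpha)^2+(v^n_\alpha)^2\big)$ combined with the inner sum and the prefactor $2/N^4$ produces precisely $-4d\,\Psi(t)$, up to $O(1/N)$ corrections arising from completing the constraint $i,j\notin\{m,n\}$ to $i\neq j$. The remaining cross-terms (involving only the admissible test functions $|v|^2$ and $v_\alpha$ in four-index correlations) are controlled via \eqref{vphi123}--\eqref{vphi1234} and Claims \ref{alphacross}--\ref{crossdirection} by $C(1+\mm_4(0))\int_0^t e^{8s/N}\ud s/N$.

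Altogether, this yields the differential inequality
$$\frac{d\Psi}{dt} \leq -4d\,\Psi(t) + \frac{C(1+\mm_4(0))}{N}\int_0^t e^{8s/N}\ud s,$$
which, combined with $\Psi(0)=0$ and Grönwall's lemma, integrates to the claimed bound $\lesssim \frac{1+\mm_4(0)}{N}e^{-4dt}\int_0^t\int_0^s e^{8r/N}\ud r\ud s$. The principal obstacle is purely combinatorial: one must verify that every $e_\beta$-piece of $\nabla\Phi$ is exactly annihilated by the antisymmetric coupling against $b(v^m-v^n)$ in \eqref{weak form Liouville}, and then correctly identify the coefficient $-4d$ among the surviving $e_\alpha\otimes e_\alpha$-terms after symmetrization, a calculation entirely parallel to but more bulky than the one behind \eqref{Psialpha}.
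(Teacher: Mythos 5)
Your proposal follows exactly the route the paper intends for Claim \ref{iijk} (the paper only says ``in the same spirit as the proof of \eqref{Psialpha}'' and gives no details): test the weak Liouville formulation \eqref{weak form Liouville} with $\Phi=\frac{1}{N^3}\sum_{i\neq j\neq k}v^i_\beta v^j_\beta(v^k_\alpha)^2$, cancel the bulk via \eqref{weak identity} and symmetry, extract the $-4d\Psi$ relaxation from the $a_{\alpha\alpha}$- and $b_\alpha$-couplings acting on the $(v^k_\alpha)^2$ factor, bound the remaining correlations by the earlier Claims and \eqref{vphi123}--\eqref{vphi1234}, and close with Gr\"onwall using $\Psi(0)=0$ (which indeed follows from $\int v_\beta f_0=0$). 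The leading bulk term and the coefficient $-4d$ are computed correctly. Two small inaccuracies to fix. First, your assertion that the $e_\beta$-pieces of $\nabla\Phi$ cancel completely is overstated: only their bulk cancels, and the $(m,n)$-diagonal leftovers survive, e.g.
\begin{equation*}
\frac{2(d-1)}{N^4}\sum_{m\neq n}\sum_{k\neq m,n}(v^m_\beta-v^n_\beta)^2(v^k_\alpha)^2
\quad\text{and}\quad
-\frac{2(d-1)}{N^4}\sum_{m\neq n}\sum_{j\neq m,n}v^j_\beta\big((v^n_\alpha)^2-(v^m_\alpha)^2\big)(v^m-v^n)_\beta,
\end{equation*}
which do not come from differentiating $(v^k_\alpha)^2$; they are, however, exactly of the degree-four, $O(N^3)/N^4$ type you bound in $\mathcal{R}(t)$ by Proposition \ref{moment estimate}, so the fix is only to route them into $\mathcal{R}(t)$ rather than claim they vanish. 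Second, having bounded $\mathcal{R}(t)\lesssim (1+\mm_4(0))e^{8t/N}/N$, your displayed differential inequality should carry that term as well; Gr\"onwall then produces an additional single-integral contribution $\frac{1}{N}e^{-4dt}\int_0^t e^{4ds}e^{8s/N}\ud s$ on top of the double integral, i.e.\ strictly a bound of the shape stated in Lemma \ref{ijik} rather than the pure double-integral form of Claim \ref{iijk}. This discrepancy is inherited from the paper's own compressed statements of these claims and is immaterial for Theorem \ref{LLN}, since either form is $\lesssim (1+\mm_4(0))(1+T)/N$, but you should state the inequality you actually derive.
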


Slightly different from claims above, we have the following lemma involving the double directional temperature.
\begin{lemma}\label{directiondirection}
It holds for any $t>0$ that
$$
\begin{aligned}
&\int_{\R^{dN}}F_N(t,V)\frac{1}{N^2}\sum_{i\neq j}(v^i_\alpha)^2(v^j_\alpha)^2 \ud V\\\leq&  \me_\alpha(t)^2+\frac{C(1+\mm_4(0))}{N}e^{-8dt}\int_0^te^{-4d\tau}\int_0^\tau\int_0^s e^{\frac{8r}{N}}\ud r\ud s\ud \tau.
\end{aligned}
$$
\end{lemma}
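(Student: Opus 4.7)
My plan is to derive a differential inequality for
$\Theta_\alpha(t) := \int_{\R^{dN}} F_N(t,V)\,\frac{1}{N^2}\sum_{i\neq j}(v^i_\alpha)^2(v^j_\alpha)^2\,\ud V$
and then compare it with the elementary ODE satisfied by $\me_\alpha(t)^2$. I will apply the weak Liouville identity \eqref{weak form Liouville} to the symmetric test function $\Phi(V) = \frac{1}{N^2}\sum_{k\neq l}(v^k_\alpha)^2(v^l_\alpha)^2$, for which
$$
\nabla_{v^k}\Phi = \frac{4 v^k_\alpha}{N^2}\, e_\alpha\sum_{l\neq k}(v^l_\alpha)^2, \qquad
\nabla^2_{v^k}\Phi = \frac{4}{N^2}\, e_\alpha\otimes e_\alpha\sum_{l\neq k}(v^l_\alpha)^2.
$$
Using the $i\leftrightarrow j$ symmetry of the resulting sums together with $a_{\alpha\alpha}(v^i-v^j)=|v^i-v^j|^2-(v^i-v^j)_\alpha^2$ and $b_\alpha(v^i-v^j)=-(d-1)(v^i-v^j)_\alpha$, the computation reduces to a triple sum over indices $\{i,j,m:\ j\neq i,\ m\neq i\}$.

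The crucial step is to split this triple sum into its ``distinct-triple'' part ($i,j,m$ pairwise distinct) and its ``degenerate'' part ($j=m$). The $|v^i|^2$ and $|v^j|^2$ contributions coming from $a_{\alpha\alpha}$, combined with the combinatorial factors $(N-1)/N$ and $(N-2)/N$ and Claim \ref{energydirection}, will deliver the dominant positive term $8d\,\me_\alpha(t)$ modulo an error of order
$E'(t):=\frac{C(1+\mm_4(0))}{N}e^{-4dt}\int_0^t\!\int_0^s e^{8r/N}\,\ud r\,\ud s.$
The $-(v^i_\alpha)^2$ and $-(v^j_\alpha)^2$ pieces of $a_{\alpha\alpha}$, when added to the $(v^i_\alpha)^2$ piece coming from the drift $b$, will combine into exactly $-8d\,\Theta_\alpha(t)+O(1/N)$. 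All remaining cross terms --- those involving $v^i\cdot v^j\,(v^m_\alpha)^2$, $v^i_\alpha v^j_\alpha(v^m_\alpha)^2$, and every $j=m$ degenerate configuration --- are of order $E'(t)$ by Claims \ref{alphacrossdirection} and \ref{crossdirection} together with the $4$-th moment bound in Proposition \ref{moment estimate}.

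Collecting these estimates yields the differential inequality
$$
\frac{\ud \Theta_\alpha}{\ud t}(t) \;\leq\; -8d\,\Theta_\alpha(t)\;+\;8d\,\me_\alpha(t)\;+\;C\,E'(t).
$$
Recalling from \eqref{direction temp} that $\me_\alpha'=4d(1-\me_\alpha)$, and hence $(\me_\alpha^2)'=8d\,\me_\alpha-8d\,\me_\alpha^2$, subtracting gives
$$
\frac{\ud}{\ud t}\bigl(\Theta_\alpha(t)-\me_\alpha(t)^2\bigr)\;\leq\;-8d\bigl(\Theta_\alpha(t)-\me_\alpha(t)^2\bigr)+C\,E'(t).
$$
Because the tensorized initial condition $F_N(0)=f_0^{\otimes N}$ forces $\Theta_\alpha(0) = (1-1/N)\,\me_\alpha(0)^2 \leq \me_\alpha(0)^2$, Gr\"onwall's inequality produces
$$
\Theta_\alpha(t)-\me_\alpha(t)^2 \;\leq\; C\int_0^t e^{-8d(t-\tau)}\,E'(\tau)\,\ud \tau,
$$
and substituting the definition of $E'(\tau)$ yields the claimed triple-integral bound.

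The main obstacle will be the careful combinatorial bookkeeping inside the triple sum: for each of the six pieces in the expansion of $a_{\alpha\alpha}$ and for each case $j=m$ versus $j\neq m$, one must match the remaining moment expression with exactly the right claim from Section \ref{poc} so that only the distinct-triple $|v^i|^2$ and $|v^j|^2$ contributions survive at leading order. A secondary technical point is to check that the $(N-1)/N$ and $(N-2)/N$ factors produce only $O(1/N)$ corrections to the coefficient $-8d$ in front of $\Theta_\alpha$, so that the effective decay rate of the linear Gr\"onwall argument is preserved.
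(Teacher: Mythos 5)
Your proposal is correct and follows essentially the same route as the paper's proof: apply the weak Liouville identity \eqref{weak form Liouville} with the symmetric test function $\Phi(V)=\frac{1}{N^2}\sum_{k\neq l}(v^k_\alpha)^2(v^l_\alpha)^2$, expand $a_{\alpha\alpha}$ and the drift, invoke Claims \ref{energydirection}, \ref{alphacrossdirection} and \ref{crossdirection} to isolate $8d\,\me_\alpha(t)-8d\,\Theta_\alpha(t)$ plus an $O(1/N)$ error, and close by subtracting the ODE for $\me_\alpha(t)^2$ and applying Gr\"onwall. Your observation that $\Theta_\alpha(0)=(1-1/N)\,\me_\alpha(0)^2\le\me_\alpha(0)^2$ is slightly more careful than the paper, which simply writes $\Psi_\alpha(0)=\me_\alpha(0)^2$, but the difference is harmless since the initial deficit is non-positive (or $O(1/N)$ under the weaker chaoticity hypothesis in the introduction).
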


\begin{proof}[Proof of  Lemma \ref{directiondirection}]
We pick the test function $\Phi(V)=\frac{1}{N^2}\sum_{k\neq l}(v^k_\alpha)^2(v^l_\alpha)^2$ in \eqref{weak form Liouville} to obtain that
$$
\begin{aligned}
\frac{\ud}{\ud t}\Psi_\alpha(t):=&\frac{\ud}{\ud t} \int_{\R^{dN}}F_N(t,V)\frac{1}{N^2}\sum_{k\neq l}(v^k_\alpha)^2(v^l_\alpha)^2\ud V\\
=&\frac{4}{N^3}\sum_{i\neq j\neq k}\int_{\R^{dN}}(v^k_\alpha )^2|v^i-v^j|^2F_N\ud V-\frac{4d}{N^3}\sum_{i,j,k}\int_{\R^{dN}}(v^k_\alpha )^2(v^i_\alpha-v^j_\alpha)^2F_N\ud V\\
=&\frac{4}{N^2}\sum_{i\neq j\neq k}\int_{\R^{dN}}(v^k_\alpha )^2(|v^i|^2-2v^i\cdot v^j+|v^j|^2)\ud V\\+&\frac{8d}{N^3}\sum_{i\neq j\neq k}\int_{\R^{dN}}(v^k_\alpha )^2 v^i_\alpha v^j_\alpha F_N \ud V-\frac{4d}{N^2}\sum_{i\neq j\neq k}\int_{\R^{dN}}(v^k_\alpha )^2\big((v^i_\alpha)^2+(v^j_\alpha)^2 \big)F_N \ud V,
\end{aligned}
$$  
where thanks to Claim \ref{energydirection}, Claim \ref{alphacrossdirection} and Claim \ref{crossdirection}, the first two terms together can be bounded by
$$
\begin{aligned}
&\frac{4}{N^2}\sum_{i\neq j\neq k}\int_{\R^{dN}}(v^k_\alpha )^2(|v^i|^2-2v^i\cdot v^j+|v^j|^2)\ud V+\frac{8d}{N^3}\sum_{i\neq j\neq k}\int_{\R^{dN}}(v^k_\alpha )^2 v^i_\alpha v^j_\alpha F_N \ud V  \\
\leq & 8d\me_\alpha(t) +\frac{C(1+\mm_4(0))}{N}e^{-4dt}\int_0^t\int_0^s e^{\frac{8r}{N}}\ud r\ud s;
\end{aligned}
$$
while the second term is nothing but $\Psi_\alpha(t)$ itself, namely 
$$
\begin{aligned}
\frac{\ud}{\ud t}\Psi_\alpha(t)\leq  8d\me_\alpha (t)-8d\Psi_\alpha(t)+\frac{C(1+\mm_4(0))}{N}e^{-4dt}\int_0^t\int_0^s e^{\frac{8r}{N}}\ud r\ud s.
\end{aligned}
$$
Notice that
$$
\begin{aligned}
   \frac{\ud}{\ud t}\me_\alpha(t)^2=2\me_\alpha(t)\frac{\ud}{\ud t} \me_\alpha(t)=8d \me_\alpha(t)-8d \mathcal{E}_\alpha(t)^2,
\end{aligned}
$$
then we have
$$
 \frac{\ud}{\ud t}\big(\Psi_\alpha(t)-\mathcal{E}_\alpha(t)^2\big)\leq -8d\big(\Psi_\alpha(t)-\mathcal{E}_\alpha(t)^2\big)+\frac{C(1+\mm_4(0))}{N}e^{-4dt}\int_0^t\int_0^s e^{\frac{8r}{N}}\ud r\ud s
$$
with $\Psi_\alpha(0)=\mathcal{E}_\alpha(0)^2$. 
We conclude the desired result by the Gr\"onwall's inequality.
\end{proof}

\medskip

Finally, we need to deal with the terms containing $v^i_\alpha v^i_\beta$ with $\alpha\neq\beta$.
\begin{lemma}\label{ijik}
For any $\alpha\neq\beta$ and $t>0$,  it holds

    $$
\int_{\R^{dN}}\frac{1}{N^3}\sum_{i\neq j\neq k}v^i_\alpha v^i_\beta v^j_\alpha v^k_\beta\ud V\lesssim \frac{1+\mm_4(0)}{N}e^{-4dt}\int_0^t\big(e^{\frac{8s}{N}}+\int_0^se^{\frac{8r}{N}}\ud r\big)\ud s .
$$ 
\end{lemma}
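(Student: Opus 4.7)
The plan is to take the test function
$$\Phi(V) = \frac{1}{N^3}\sum_{i\neq j\neq k} v^i_\alpha v^i_\beta v^j_\alpha v^k_\beta = \frac{1}{N^3}\sum_{i\neq j\neq k}\vphi_1(v^i)\vphi_2(v^j)\vphi_3(v^k),$$
with $\vphi_1(v)=v_\alpha v_\beta$, $\vphi_2(v)=v_\alpha$, $\vphi_3(v)=v_\beta$, set $\Psi(t):=\int F_N\Phi\,\ud V$, and derive a Grönwall differential inequality for $\Psi$ via the weak Liouville form \eqref{weak form Liouville}, exactly in the spirit of the proofs of Lemma \ref{direction} and Lemma \ref{directiondirection}. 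First, $\Psi(0)=0$: indeed $F_N(0)=f_0^{\otimes N}$, and the three single-particle integrals $\int v_\alpha v_\beta f_0\,\ud v$, $\int v_\alpha f_0\,\ud v$, $\int v_\beta f_0\,\ud v$ all vanish by \eqref{conservation} and the diagonalization of the initial second-moment matrix.

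For the ODE, the key observations are: (a) $\vphi_2,\vphi_3$ are affine, hence have vanishing Hessian and satisfy \eqref{weak identity} pointwise; (b) only $\vphi_1=v_\alpha v_\beta$ fails the pointwise identity, with the explicit obstruction
$$\tfrac{1}{2}\bigl(\nabla^2\vphi_1(v^p)+\nabla^2\vphi_1(v^q)\bigr):a(v^p-v^q) + \bigl(\nabla\vphi_1(v^p)-\nabla\vphi_1(v^q)\bigr)\cdot b(v^p-v^q) = -2d(v^p-v^q)_\alpha(v^p-v^q)_\beta,$$
coming from $a_{\alpha\beta}(z)=-z_\alpha z_\beta$ for $\alpha\neq\beta$ and $b(z)=-(d-1)z$. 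Plugging $\Phi$ into \eqref{weak form Liouville} and organizing by the role each Liouville index $p,q$ plays in the triple $\{i,j,k\}$, the leading-order contribution comes from the case where one of $p,q$ plays the "$i$" role while the other lies outside the triple. Together with the symmetric counterpart, it yields
$$-\frac{2d}{N^4}\sum_{\substack{p,j,k,q\\ \text{all distinct}}}\int F_N (v^p-v^q)_\alpha(v^p-v^q)_\beta\, v^j_\alpha v^k_\beta\,\ud V.$$
Expanding $(v^p-v^q)_\alpha(v^p-v^q)_\beta$ into its four monomials, the $v^p_\alpha v^p_\beta$ piece reproduces $-4d\,\Psi(t)$ (up to the combinatorial factor $(N-3)/N=1-O(1/N)$, the remainder being an $O(1/N)$ error controlled by Proposition \ref{moment estimate}), while the mixed and pure-$q$ pieces are bounded by Claim \ref{alphacross}, Claim \ref{iijk}, and Claim \ref{alphabetacrossalphacross}.

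All other terms contribute only to the error: (i) cases where both $p,q\in\{i,j,k\}$ carry combinatorial weight $O(1/N)$ and are handled directly by Proposition \ref{moment estimate}; (ii) configurations where $p$ plays the "$j$" or "$k$" role (so $\nabla_{v^p}$ acts on $\vphi_2$ or $\vphi_3$) and $q$ is outside yield sums of the type $\int F_N\frac{1}{N^3}\sum v^i_\alpha v^i_\beta v^k_\beta(v^j_\alpha-v^q_\alpha)\,\ud V$, all bounded by Claim \ref{alphacross}, Claim \ref{cross}, and Claim \ref{iijk}. Altogether the error is at most $\frac{C(1+\mm_4(0))}{N}\bigl(e^{8t/N}+\int_0^t e^{8s/N}\,\ud s\bigr)$, and one obtains
$$\frac{\ud\Psi}{\ud t} + 4d\,\Psi(t) \leq \frac{C(1+\mm_4(0))}{N}\Bigl(e^{8t/N}+\int_0^t e^{8s/N}\,\ud s\Bigr).$$
Grönwall's inequality with $\Psi(0)=0$ then produces the claimed bound. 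The main obstacle is the combinatorial bookkeeping in the middle step: because $\vphi_1,\vphi_2,\vphi_3$ are all different, every index may play three distinct roles in the test function, producing numerous sub-sums that must be carefully sorted into the single dominant $-4d\Psi$ dissipation and the $O(1/N)$ remainders. This is more delicate than the symmetric situation of Lemma \ref{directiondirection}, but the template is identical.
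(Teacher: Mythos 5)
Your proposal follows essentially the same route as the paper's proof: the same test function $\frac{1}{N^3}\sum_{k\neq l\neq m}v^k_\alpha v^k_\beta v^l_\alpha v^m_\beta$, the weak Liouville form \eqref{weak form Liouville}, extraction of a $-4d\Psi$ dissipation term, error control via Claim \ref{alphabetacrossalphacross} and the moment estimate, and Gr\"onwall with $\Psi(0)=0$; your displayed inequality $\Psi'(t)+4d\,\Psi(t)\leq \frac{C(1+\mm_4(0))}{N}\big(e^{8t/N}+\int_0^t e^{8s/N}\ud s\big)$ is exactly what the paper derives. One bookkeeping correction: in the expansion of $(v^p-v^q)_\alpha(v^p-v^q)_\beta$ the pure-$p$ monomial alone only produces $-2d\Psi$ (up to the $(N-3)/N$ factor); the second $-2d\Psi$ comes from the pure-$q$ monomial, which is $\Psi$ itself after relabeling and therefore must be retained as dissipation rather than shipped to the error terms --- Claims \ref{alphacross}, \ref{iijk} and \ref{alphabetacrossalphacross} do not control it, and only the mixed monomials $v^p_\alpha v^q_\beta+v^q_\alpha v^p_\beta$ are handled by Claim \ref{alphabetacrossalphacross}. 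Since your final differential inequality already carries the correct coefficient $4d$, this slip in attribution does not affect the argument or the conclusion.
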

\begin{proof}[Proof of Lemma \ref{ijik}]
We pick the test function $\vphi(V)=\frac{1}{N^3}\sum_{k\neq l\neq m}v^k_\alpha v^k_\beta v^l_\alpha v^m_\beta$ to obtain that
$$
\begin{aligned}
\nabla_{v^i}\vphi(V)=&\frac{1}{N^3}(0,\dots,v^i_\beta,\ldots,v^i_\alpha,\ldots0)\sum_{l\neq m\neq i}v^l_\alpha v^m_\beta\\&+\frac{1}{N^3}\sum_{k\neq l\neq i}v^k_\alpha v^k_\beta (0,\dots,v^l_\alpha,\ldots,0)++\frac{1}{N^3}\sum_{k\neq m\neq i}v^k_\alpha v^k_\beta (0,\dots,v^m_\beta,\ldots,0)
,
\end{aligned}
$$
and
$$
\nabla_{v^i}^2\vphi(V)=\frac{1}{N^3}\sum_{l\neq m\neq i} v^l_\alpha v^m_\beta\times\begin{cases}1 \text{ at the } (\alpha,\beta)\text{-th }\mbox{ and } (\beta,\alpha)\text{-th } \text{ entry}\\ 
 0 \text{  otherwise}\end{cases}.
$$
Hence we have
$$
\begin{aligned}
&\frac{\ud}{\ud t}\int_{\R^{dN}}F_N(t,V)\frac{1}{N^3}\sum_{k\neq l\neq m}v^k_\alpha v^k_\beta v^l_\alpha v^m_\beta\ud V\\
\leq 
&-\frac{2d}{N^3}\sum_{i\neq j\neq k\neq l}\int_{\R^{dN}}v^k_\alpha v^l_\beta(v^i_\beta-v^j_\beta)(v^i_\alpha-v^j_\alpha)F_N\ud V+\frac{C(1+\mm_4(0))}{N}e^{\frac{8t}{N}}\\
\leq &-\frac{4d}{N^3}\sum_{i\neq k\neq l}\int_{\R^{dN}}v^k_\alpha v^l_\beta v^i_\beta v^i_\alpha F_N\ud V+\frac{4d}{N^4}\sum_{i\neq j\neq k\neq l}\int_{\R^{dN}}v^k_\alpha v^l_\beta v^j_\beta v^i_\alpha F_N\ud V\\&+\frac{C(1+\mm_4(0))}{N}e^{\frac{8t}{N}}.
\end{aligned}
$$  

The second term on the right-hand side can be controlled thanks to Claim \ref{alphabetacrossalphacross}. And recall our assumption that $\me_{\alpha\beta}(0)=0$, thus the desired lemma is deduced from the Gr\"onwall's inequality.
\end{proof}

The last technical lemma we need to prove is as follows, which involves double $v^i_\alpha v^i_\beta$ structure.
\begin{lemma}\label{alphabetacrossalphabetacross}
For any $\alpha\neq\beta$ and $t>0$,  it holds that

$$
\begin{aligned}
&\int_{\R^{dN}}F_N(t,V)\frac{1}{N^2}\sum_{i\neq j}v^i_\alpha v^i_\beta v^j_\alpha v^j_\beta\ud V\\\lesssim &\frac{1+\mm_4(0)}{N}\int_0^t\Big[e^{\frac{8\tau}{N} }+e^{-4d\tau}\int_0^\tau\big(e^{\frac{8s }{N}}+\int_0^s e^{\frac{8r }{N}}\ud r\big)\ud s\Big]\ud \tau.
\end{aligned}
$$ 
 \end{lemma}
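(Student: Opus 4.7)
The strategy follows the pattern of Lemma \ref{direction}, Lemma \ref{directiondirection}, and Lemma \ref{ijik}: introduce
$$\Psi_{\alpha\beta}(t):=\int_{\R^{dN}}F_N(t,V)\frac{1}{N^2}\sum_{i\neq j}v^i_\alpha v^i_\beta v^j_\alpha v^j_\beta\ud V,$$
derive a first-order ODE for $\Psi_{\alpha\beta}$ whose principal part is strictly dissipative, control the inhomogeneity using the lemmas and claims already established in the section, and conclude via a Gronwall argument. The tensorization $F_N(0)=f_0^{\otimes N}$ together with the initial cross-temperature condition $\int v_\alpha v_\beta f_0\,\ud v=0$ for $\alpha\neq\beta$ yields $\Psi_{\alpha\beta}(0)=0$, which will remove any contribution from the initial data after integration.

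Taking as test function $\Phi(V)=\frac{1}{N^2}\sum_{k\neq l}v^k_\alpha v^k_\beta v^l_\alpha v^l_\beta$, we compute that $\nabla_{v^i}\Phi=\frac{2}{N^2}\sum_{l\neq i}v^l_\alpha v^l_\beta(v^i_\beta \hat e_\alpha+v^i_\alpha \hat e_\beta)$ and that $\nabla^2_{v^i}\Phi$ is supported only on the $(\alpha,\beta)$ and $(\beta,\alpha)$ entries, where it equals $\frac{2}{N^2}\sum_{l\neq i}v^l_\alpha v^l_\beta$. Substituting these into \eqref{weak form Liouville} with $a_{\alpha\beta}(z)=-z_\alpha z_\beta$ (valid since $\alpha\neq\beta$) and $b(z)=-(d-1)z$, then using the $i\leftrightarrow j$ symmetry in the double sums, one obtains an expression involving only the four-particle monomials $v^l_\alpha v^l_\beta\cdot(v^i-v^j)_\alpha(v^i-v^j)_\beta$ and their advection analogues. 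Expanding $(v^i-v^j)_\alpha(v^i-v^j)_\beta$ and classifying by index coincidence, these split into: (i) diagonal monomials $v^l_\alpha v^l_\beta v^i_\alpha v^i_\beta$ (and $v^l_\alpha v^l_\beta v^j_\alpha v^j_\beta$) with $l\neq i$, which are exactly the integrand of $\Psi_{\alpha\beta}$ up to the index counting; (ii) three-index-distinct monomials $v^l_\alpha v^l_\beta v^i_\alpha v^j_\beta$ with $i,j,l$ pairwise distinct, controlled directly by Lemma \ref{ijik}; and (iii) degenerate $l=j$ pieces such as $v^i_\alpha v^j_\alpha (v^j_\beta)^2$ or $(v^j_\alpha)^2 v^i_\beta v^j_\beta$, where only $O(N^2)$ summands survive against the $1/N^3$ normalization, and so are bounded by $C(1+\mm_4(0))e^{8t/N}/N$ through Proposition \ref{moment estimate}.

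Combining the diagonal contributions from the Hessian (coefficient $-4(2N-3)/N^3$ in front of $\sum_{i\neq l}v^i_\alpha v^i_\beta v^l_\alpha v^l_\beta$) and those from the drift ($-8(d-1)(N-1)/N^3$), the principal part becomes $(-8d+O(1/N))\Psi_{\alpha\beta}(t)$ after integration against $F_N$. Gathering the controls on (ii) via Lemma \ref{ijik} and on (iii) via the 4-th moment estimate produces a remainder $R(t)$ with
$$|R(t)|\lesssim\frac{1+\mm_4(0)}{N}\Big[e^{\frac{8t}{N}}+e^{-4dt}\int_0^t\Big(e^{\frac{8s}{N}}+\int_0^s e^{\frac{8r}{N}}\ud r\Big)\ud s\Big].$$
For $N$ large the $O(1/N)\Psi_{\alpha\beta}$ correction is absorbed into the principal $-8d\Psi_{\alpha\beta}$, yielding $\frac{\ud}{\ud t}\Psi_{\alpha\beta}(t)\leq -4d\,\Psi_{\alpha\beta}(t)+R(t)$. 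Gronwall's inequality with $\Psi_{\alpha\beta}(0)=0$ then gives $|\Psi_{\alpha\beta}(t)|\leq\int_0^t e^{-4d(t-\tau)}|R(\tau)|\ud\tau\leq\int_0^t|R(\tau)|\ud\tau$, which is the bound announced in the lemma.

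The main obstacle is the bookkeeping in the second step: one must symmetrize carefully via $i\leftrightarrow j$, expand $(v^i-v^j)_\alpha(v^i-v^j)_\beta$ and distribute each of its four summands across the restricted sum $\{i\neq j,\,l\neq i\}$ by splitting off the cases $l=j$, $l=i$, and three distinct indices, and verify that the $M$-coefficients from Hessian and drift add up to exactly $-8d/N^2$ (leading order). Getting this coefficient strictly negative is what makes the Gronwall step produce the desired $1/N$ rate without any exponentially growing factor in time, matching the structure of the right-hand side.
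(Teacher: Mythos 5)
Your proposal follows essentially the same route as the paper: the same test function $\Phi(V)=\frac{1}{N^2}\sum_{k\neq l}v^k_\alpha v^k_\beta v^l_\alpha v^l_\beta$ inserted in the weak Liouville identity \eqref{weak form Liouville}, the same splitting into a dissipative diagonal part, a three-distinct-index part controlled by Lemma \ref{ijik}, and coincident-index remainders bounded via Proposition \ref{moment estimate}, concluded by Gr\"onwall with $\Psi_{\alpha\beta}(0)=0$. The only cosmetic deviations are the dissipative constant (your $-8d+O(1/N)$ versus the paper's $-4d$; any negative constant suffices) and that the Gr\"onwall step really yields the one-sided upper bound on $\Psi_{\alpha\beta}$ rather than on $|\Psi_{\alpha\beta}|$, which is exactly what the lemma asserts and what is used later.
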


\begin{proof}[Proof of Lemma \ref{alphabetacrossalphabetacross}]
Let $\Phi(V)=\frac{1}{N^2}\sum_{k\neq l}v^k_\alpha v^k_\beta v^l_\alpha v^l_\beta$ with components $\alpha< \beta$, we obtain
$$
\nabla_{v^i}\vphi(V)=\frac{2}{N^2}(0,\dots,v^i_\beta,\ldots,v^i_\alpha,\ldots0)\sum_{k\neq i}v^k_\alpha v^k_\beta ,
$$
and
$$
\nabla_{v^i}^2\vphi(V)=\frac{2}{N^2}\sum_{k\neq i}v^k_\alpha v^k_\beta\times\begin{cases}1 \text{ at the } (\alpha,\beta)\text{-th }\mbox{ and } (\beta,\alpha)\text{-th } \text{ entries}\\ 
 0 \text{  otherwise}\end{cases}.
$$
Hence we have $$
\begin{aligned}
&\frac{\ud}{\ud t}\int_{\R^{dN}}F_N(t,V)\frac{1}{N^2}\sum_{k\neq l}v^k_\alpha v^k_\beta v^l_\alpha v^l_\beta\ud V
\\\leq&-\frac{2d}{N^3}\sum_{i\neq j\neq k}\int_{\R^{dN}}v^k_\alpha v^k_\beta(v^i_\alpha-v^j_\alpha)(v^i_\beta-v^j_\beta)F_N\ud V+\frac{C(1+\M_4(0))}{N}e^{\frac{8t}{N}}\\
\leq &\frac{4d}{N^3}\sum_{i\neq k}\int_{\R^{dN}}v^k_\alpha v^k_\beta v^i_\alpha v^j_\beta F_N\ud V-\frac{4d}{N^2}\sum_{i\neq k}\int_{\R^{dN}}v^k_\alpha v^k_\beta v^i_\alpha v^i_\beta F_N\ud V+\frac{C(1+\M_4(0))}{N}e^{\frac{8t}{N}}.\end{aligned}
$$  
Thanks to Lemma \ref{ijik} and the Gr\"onwall's inequality, for any $\alpha\neq\beta$, it holds that
$$
\begin{aligned}
&\int_{\R^{dN}}F_N(t,V)\frac{1}{N^2}\sum_{i\neq j}v^i_\alpha v^i_\beta v^j_\alpha v^j_\beta\ud V\\\leq &\frac{C(1+\mm_4(0))}{N}\int_0^t\Big[e^{\frac{8\tau}{N} }+e^{-4d\tau}\int_0^\tau\big(e^{\frac{8s }{N}}+\int_0^s e^{\frac{8r }{N}}\ud r\big)\ud s\Big]\ud \tau.
\end{aligned}
$$ 

\end{proof}

\medskip

So far, we have finished all the ingredient for our proof of the Law of Large Numbers theorem. Now recall the expansion \eqref{expansion}, the off-diagonal part $I_{\alpha\neq\beta}$ can be bounded by Claim \ref{iijk}, Lemma \ref{ijik} and Lemma \ref{alphabetacrossalphabetacross}
by
\begin{equation}\label{off-dig part}
\begin{aligned}
I_{\alpha\neq\beta}(t)\lesssim\frac{1+\mm_4(0)}{N}\Big[&\int_0^t\Big(e^{\frac{8\tau}{N} }+e^{-4d\tau}\int_0^\tau\big(e^{\frac{8s }{N}}+\int_0^s e^{\frac{8r }{N}}\ud r\big)\ud s\Big)\ud \tau\\&+e^{-4dt}\int_0^t\big(e^{\frac{8s }{N}}+\int_0^s e^{\frac{8r }{N}}\ud r\big)\ud s\Big];
\end{aligned}
\end{equation}
while the diagonal part satisfies 
\begin{equation}\label{dig part}
\begin{aligned}
I_{\alpha=\beta}(t)\lesssim\frac{1+\mm_4(0)}{N}\Big[&\int_0^te^{\frac{8\tau}{N} }\ud \tau+e^{-4dt}\int_0^t\big(e^{\frac{8s }{N}}+\int_0^s e^{\frac{8r }{N}}\ud r\big)\ud s\\&+e^{-8dt}\int_0^te^{-4d\tau}\int_0^\tau\int_0^s e^{\frac{8r}{N}}\ud r\ud s\ud \tau\Big],
\end{aligned}
\end{equation}
which is directly from Corollary \ref{ee}-\ref{aa} below.

\begin{cor}\label{ee}
Lemma \ref{energy} and Claim \ref{energyenergy} imply that
$$\int_{\R^{dN}}F_N(t,V)\frac{1}{N^2}\sum_{j\neq k}\big(d-|v^j|^2\big)\big(d-|v^k|^2\big)\ud V\lesssim\frac{1+\mm_4(0)}{N}\int_0^t e^{\frac{8s}{N}}\ud s.$$
\end{cor}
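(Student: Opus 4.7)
The plan is straightforward: expand the product and reduce to the two cited ingredients. First expand
$$
\big(d-|v^j|^2\big)\big(d-|v^k|^2\big) = d^2 - d\big(|v^j|^2+|v^k|^2\big) + |v^j|^2|v^k|^2,
$$
so that after summing over $j\neq k$ (which runs over $N(N-1)$ pairs) and dividing by $N^2$,
$$
\frac{1}{N^2}\sum_{j\neq k}\big(d-|v^j|^2\big)\big(d-|v^k|^2\big)
= \frac{N-1}{N}d^2 \;-\; \frac{2d(N-1)}{N}\cdot\frac{1}{N}\sum_{j=1}^{N}|v^j|^2 \;+\; \frac{1}{N^2}\sum_{j\neq k}|v^j|^2|v^k|^2.
$$
Here I used the symmetry $\sum_{j\neq k}(|v^j|^2+|v^k|^2)=2(N-1)\sum_j|v^j|^2$.

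Next I would integrate against $F_N(t,V)$. The linear-in-$|v^j|^2$ term is controlled exactly by Lemma \ref{energy}, which gives $\int_{\R^{dN}} F_N \,\frac{1}{N}\sum_j|v^j|^2\,\ud V = d$; the bilinear term is controlled by Claim \ref{energyenergy}, which gives
$$
\int_{\R^{dN}} F_N \,\frac{1}{N^2}\sum_{j\neq k}|v^j|^2|v^k|^2 \,\ud V = d^2 + O\!\left(\frac{1+\mm_4(0)}{N}\int_0^t e^{8s/N}\ud s\right).
$$
Plugging these in, the leading-order constants collapse:
$$
\frac{N-1}{N}d^2 - \frac{2(N-1)}{N}d^2 + d^2 = \Big(1 - \tfrac{1}{N} - 2 + \tfrac{2}{N} + 1\Big)d^2 = \frac{d^2}{N},
$$
which is already of order $1/N$ and dominated by $\frac{1+\mm_4(0)}{N}\int_0^t e^{8s/N}\ud s$ (the integral is bounded below by $t$ for $T\sim O(N)$, but even without that the bare $\frac{d^2}{N}$ term is absorbed into the stated bound).

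There is no real obstacle here: the identity is purely algebraic and the only genuine analytic inputs are Lemma \ref{energy} (exact conservation of average kinetic energy) and Claim \ref{energyenergy} (the Law of Large Numbers for $|v^i|^2|v^j|^2$). The only point requiring a moment's care is the bookkeeping of the combinatorial factors $\tfrac{N-1}{N}$ so that the three $O(1)$ contributions cancel and leave only $O(1/N)$ remainders; once that cancellation is verified, summing the error from Claim \ref{energyenergy} with the $d^2/N$ term yields the asserted bound.
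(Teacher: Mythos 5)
Your expansion and bookkeeping are exactly the intended route: split $(d-|v^j|^2)(d-|v^k|^2)$ into $d^2$, the linear terms handled by the exact energy conservation of Lemma \ref{energy}, and the bilinear term handled by Claim \ref{energyenergy}, with the combinatorial factors $\tfrac{N-1}{N}$ tracked correctly so that the three $O(1)$ pieces nearly cancel. The flaw is in the very last step: the leftover $\tfrac{d^2}{N}$ is \emph{not} absorbed into the stated right-hand side. The bound you must prove is $\lesssim \tfrac{1+\mm_4(0)}{N}\int_0^t e^{8s/N}\,\ud s$, and this quantity vanishes as $t\to 0^+$ (it is of size $\sim t/N$ for small $t$), whereas $\tfrac{d^2}{N}$ does not; so no constant $C$ makes $\tfrac{d^2}{N}\le C\,\tfrac{1+\mm_4(0)}{N}\int_0^t e^{8s/N}\,\ud s$ for all $t\in[0,T]$. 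Your parenthetical remark conflates ``of order $1/N$'' with ``dominated by the stated time-dependent bound''.

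The gap is small and closes with one extra observation, which is what the cancellation in the paper's scheme really rests on: the proof of Claim \ref{energyenergy} via \eqref{vphi12} and integration in time actually compares $\int F_N\,\tfrac{1}{N^2}\sum_{j\neq k}|v^j|^2|v^k|^2\,\ud V$ with its \emph{initial} value, which for $F_N(0)=f_0^{\otimes N}$ equals $\tfrac{N(N-1)}{N^2}\big(\int f_0|v|^2\big)^2=\tfrac{N-1}{N}d^2$, not $d^2$. Using this sharper reference constant, your three leading contributions become $\tfrac{N-1}{N}d^2-\tfrac{2(N-1)}{N}d^2+\tfrac{N-1}{N}d^2=0$, i.e.\ they cancel exactly, and only the error $\tfrac{1+\mm_4(0)}{N}\int_0^t e^{8s/N}\,\ud s$ survives, which is the stated bound. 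Equivalently, note that the quantity $G(t)=\int F_N(t)\,\tfrac{1}{N^2}\sum_{j\neq k}(d-|v^j|^2)(d-|v^k|^2)\,\ud V$ satisfies $G(0)=\tfrac{N-1}{N}\big(\int f_0(d-|v|^2)\,\ud v\big)^2=0$ for tensorized data, and by Lemma \ref{energy} the two constant-in-time pieces drop out when differentiating, so $G'(t)$ is controlled by \eqref{vphi12} and integration in time gives the corollary directly. (If you only need the downstream estimate \eqref{cancel estimate}, an extra additive $d^2/N$ would of course be harmless, but it does not prove the corollary as stated.)
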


\begin{cor}\label{ec}
Claim \ref{alphacross}, Claim \ref{cross}, Claim \ref{alphacrossenergy} and Claim \ref{crossenergy} imply that
$$\int_{\R^{dN}}F_N(t,V)\frac{1}{N^3}\sum_{i\neq j\neq k}\big(d-| v^j|^2\big)\big(2v^i\cdot v^k-2v^i_\alpha v^k_\alpha\big)\ud V\lesssim\frac{1+\mm_4(0)}{N}\int_0^t e^{\frac{8s}{N}}\ud s.$$
\end{cor}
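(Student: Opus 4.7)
The plan is to expand the integrand as a linear combination of four simpler scalar pieces and bound each one by exactly one of the four cited claims. Distributing $(d-|v^j|^2)\big(2v^i\cdot v^k - 2 v^i_\alpha v^k_\alpha\big)$ produces two ``pure'' pieces free of the index $j$ (the contributions $2d\, v^i\cdot v^k$ and $-2d\, v^i_\alpha v^k_\alpha$) together with two pieces carrying $|v^j|^2$ (the contributions $-2|v^j|^2\, v^i\cdot v^k$ and $2|v^j|^2\, v^i_\alpha v^k_\alpha$). I will handle these two categories separately and combine via the triangle inequality at the end.

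For the two pure pieces, I will exploit the fact that the summand does not depend on $j$: summing over $j$ with the constraints $j\neq i$ and $j\neq k$ produces exactly $N-2$ identical contributions. This collapses each piece to $\frac{2d(N-2)}{N}$ times either $\int F_N \frac{1}{N^2}\sum_{i\neq k}v^i\cdot v^k\,\ud V$ or $\int F_N \frac{1}{N^2}\sum_{i\neq k}v^i_\alpha v^k_\alpha\,\ud V$, which Claim \ref{cross} and Claim \ref{alphacross} respectively bound by $\frac{1+\mm_4(0)}{N}\int_0^t e^{8s/N}\ud s$. The prefactor $(N-2)/N$ is uniformly $O(1)$ and absorbs into the implicit constant.

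For the two pieces containing $|v^j|^2$, I will simply read off the bounds from Claim \ref{crossenergy} (up to a cosmetic relabeling of the dummy indices, legitimized by the symmetry of the summation set and the exchangeability of $F_N$) and Claim \ref{alphacrossenergy}, each of which already delivers the target rate $\frac{1+\mm_4(0)}{N}\int_0^t e^{8s/N}\ud s$. Adding the four bounds then gives the claimed estimate.

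I expect no substantive obstacle here: the corollary is essentially a bookkeeping exercise assembling four previously established moment controls. The only details worth double-checking are that the $j$-summation in the pure pieces recovers the normalization $1/N^2$ expected by Claims \ref{cross} and \ref{alphacross}, and that the permutation of indices used to invoke Claim \ref{crossenergy} is permissible — both of which are immediate from exchangeability and the pairwise-distinct constraint on $(i,j,k)$.
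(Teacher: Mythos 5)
Your proposal is correct and coincides with the paper's own (implicit) argument: the corollary is stated without a separate proof precisely because it is the bookkeeping you describe — expand $(d-|v^j|^2)(2v^i\cdot v^k-2v^i_\alpha v^k_\alpha)$ into four monomials, collapse the $j$-sum (giving the harmless factor $(N-2)/N$) for the two $j$-free pieces so that Claims \ref{cross} and \ref{alphacross} apply, and relabel dummy indices to invoke Claims \ref{crossenergy} and \ref{alphacrossenergy} for the remaining two. The only caveat, shared with the paper's own phrasing, is that the cited claims are written as one-sided bounds while the signed combination really uses them as absolute-value bounds, which is how they are established (Gr\"onwall from zero initial data) and intended.
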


\begin{cor}\label{ea}
Lemma \ref{energy}, Lemma \ref{direction} and Claim \ref{energydirection} imply that
$$
\begin{aligned}
&\int_{\R^{dN}}F_N(t,V)\frac{1}{N^3}\sum_{i\neq j\neq k}\big(d-| v^j|^2\big)\big((v^k_\alpha)^2-\mathcal{E}_\alpha(t)\big)\ud V\\\lesssim&\frac{1+\mm_4(0)}{N}\Big(e^{-4dt}\int_0^t\int_0^s e^{\frac{8r}{N}}\ud r\ud s+\int_0^t e^{\frac{8s}{N}}\ud s\Big).
\end{aligned}
$$
\end{cor}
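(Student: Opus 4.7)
The plan is to expand the product in the integrand into four simpler terms and to invoke each of the three stated ingredients, namely Lemma \ref{energy}, Lemma \ref{direction} and Claim \ref{energydirection}, to control a specific piece. First, since the index $i$ does not appear in the summand, the triple sum collapses to
$$\frac{1}{N^3}\sum_{i\neq j\neq k}(d-|v^j|^2)\big((v^k_\alpha)^2-\mathcal{E}_\alpha(t)\big) = \frac{N-2}{N^3}\sum_{j\neq k}(d-|v^j|^2)\big((v^k_\alpha)^2-\mathcal{E}_\alpha(t)\big).$$
I would then expand the product as
$$(d-|v^j|^2)\big((v^k_\alpha)^2-\mathcal{E}_\alpha(t)\big) = d(v^k_\alpha)^2 - d\mathcal{E}_\alpha(t) - |v^j|^2(v^k_\alpha)^2 + |v^j|^2\mathcal{E}_\alpha(t),$$
and integrate each of the four resulting pieces separately against $F_N$.

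After reducing via exchangeability, the first piece becomes $d\frac{N-1}{N}\cdot\frac{1}{N}\sum_k(v^k_\alpha)^2$, whose integral is $d\frac{N-1}{N}\mathcal{E}_\alpha(t)$ modulo the error term $\varepsilon_1 := \frac{1+\mm_4(0)}{N}e^{-4dt}\int_0^t\int_0^s e^{8r/N}\ud r\ud s$ by Lemma \ref{direction}. The second piece is deterministic and integrates to $d\mathcal{E}_\alpha(t)\frac{N-1}{N}$. The third piece $\frac{1}{N^2}\sum_{j\neq k}|v^j|^2(v^k_\alpha)^2$ is precisely the object appearing in Claim \ref{energydirection}, giving $d\mathcal{E}_\alpha(t)$ modulo $\varepsilon_1$. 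The fourth piece equals $\mathcal{E}_\alpha(t)\frac{N-1}{N}\cdot\frac{1}{N}\sum_j|v^j|^2$, whose integral is \emph{exactly} $d\mathcal{E}_\alpha(t)\frac{N-1}{N}$ by Lemma \ref{energy}, with no error contribution at all.

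Summing with the proper signs, the four leading-order $d\mathcal{E}_\alpha(t)$ contributions match up to a single residual of order $-d\mathcal{E}_\alpha(t)/N$, arising from the mismatch between the $\frac{N-1}{N}$ prefactors in the first, second and fourth pieces and the coefficient $1$ in the third piece. Since $\mathcal{E}_\alpha(t) = 1 + D_{\alpha\alpha}e^{-4dt}$ is bounded uniformly in $t$, this residual is at most $C/N$, which is absorbed into the $\frac{1+\mm_4(0)}{N}\int_0^t e^{8s/N}\ud s$ component of the claimed right-hand side (comparable in light of the fourth-moment bound from Proposition \ref{moment estimate}), while the two error contributions coming from Lemma \ref{direction} and Claim \ref{energydirection} provide precisely the $\frac{1+\mm_4(0)}{N}e^{-4dt}\int_0^t\int_0^s e^{8r/N}\ud r\ud s$ part.

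The main obstacle is essentially bookkeeping: one must carefully track the $\frac{N-1}{N}$ and $\frac{N-2}{N}$ combinatorial corrections stemming from the restricted summations so that the leading-order $d\mathcal{E}_\alpha(t)$ pieces cancel to within the allowed $O(1/N)$ accuracy, rather than leaving a larger surplus that would violate the claimed rate. As a consistency check, at $t = 0$ the tensorized initial data $F_N(0) = f_0^{\otimes N}$ together with the conservation identities $\int f_0(d-|v|^2)\ud v = 0$ and $\int f_0\big((v_\alpha)^2 - \mathcal{E}_\alpha(0)\big)\ud v = 0$ force each summand on the left-hand side to vanish identically, matching the vanishing of the right-hand side at $t = 0$; this forces the various $O(1/N)$ pieces to combine correctly and is a useful guide during the computation.
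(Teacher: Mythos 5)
Your argument is correct and is essentially the paper's (implicit) proof of this corollary: collapse the $i$-sum, expand the product into four pieces, and apply Lemma \ref{energy}, Lemma \ref{direction} and Claim \ref{energydirection}, with exchangeability handling the combinatorial prefactors. One small correction: the $O(1/N)$ residual cannot be ``absorbed'' into $\frac{1+\mm_4(0)}{N}\int_0^t e^{8s/N}\ud s$, since that integral vanishes as $t\to 0$; it is harmless only because, as you yourself computed, it equals $-d\,\mathcal{E}_\alpha(t)\,\frac{N-2}{N^2}\leq 0$ (up to the common $(N-2)/N$ prefactor), so for the one-sided $\lesssim$ bound it can simply be dropped. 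Note also that the piece matched against Claim \ref{energydirection} enters with a minus sign, so you are implicitly using that claim (and Lemma \ref{direction}) two-sidedly; this is the same convention the paper uses throughout Section \ref{poc}, so it is consistent with the paper's level of rigor.
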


\begin{cor}\label{ca}
Claim \ref{alphacross} Claim \ref{cross}, Claim \ref{alphacrossdirection} and Claim \ref{crossdirection} imply that
$$
\begin{aligned}
&\int_{\R^{dN}}F_N(t,V)\frac{1}{N^3}\sum_{i\neq j\neq k}\big(2v^i\cdot v^j- 2v^i_\alpha v^j_\alpha\big)\big((v^k_\alpha)^2-\mathcal{E}_\alpha(t)\big)\ud V\\\lesssim&\frac{1+\mm_4(0)}{N}\Big(e^{-4dt}\int_0^t\int_0^s e^{\frac{8r}{N}}\ud r\ud s+\int_0^t e^{\frac{8s}{N}}\ud s\Big).
\end{aligned}
$$

\end{cor}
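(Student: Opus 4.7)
The plan is a straightforward four-way decomposition, following the pattern established in the preceding corollaries (\ref{ee}--\ref{ca}). I first expand the integrand algebraically as
\begin{equation*}
\big(2v^i\cdot v^j - 2v^i_\alpha v^j_\alpha\big)\big((v^k_\alpha)^2 - \mathcal{E}_\alpha(t)\big) = 2v^i\cdot v^j\,(v^k_\alpha)^2 - 2v^i_\alpha v^j_\alpha\,(v^k_\alpha)^2 - 2\mathcal{E}_\alpha(t)\,v^i\cdot v^j + 2\mathcal{E}_\alpha(t)\,v^i_\alpha v^j_\alpha,
\end{equation*}
then sum over distinct $i,j,k$, divide by $N^3$, integrate against $F_N$, and control each of the four resulting terms separately by the triangle inequality.

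The first two terms genuinely involve the three-body structure through the factor $(v^k_\alpha)^2$ and are bounded \emph{verbatim} by Claim \ref{crossdirection} and Claim \ref{alphacrossdirection}, each of order $\tfrac{C(1+\mm_4(0))}{N}e^{-4dt}\int_0^t\int_0^s e^{8r/N}\ud r\ud s$. For the two terms carrying the scalar factor $\mathcal{E}_\alpha(t)$, I observe that the explicit formula \eqref{direction temp} combined with \eqref{eta} yields $\mathcal{E}_\alpha(t) = 1 + D_{\alpha\alpha}e^{-4dt}$ with $|D_{\alpha\alpha}| \leq d-1-\eta$, so $\mathcal{E}_\alpha(t)$ is uniformly bounded in $t$ and may be pulled outside the integral as a constant.

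Finally, since the remaining integrands depend only on $(v^i, v^j)$, the index sum trivially reduces as
\begin{equation*}
\frac{1}{N^3}\sum_{i\neq j\neq k} g(v^i,v^j) = \frac{N-2}{N^3}\sum_{i\neq j} g(v^i,v^j) \leq \frac{1}{N^2}\sum_{i\neq j} g(v^i,v^j),
\end{equation*}
so these last two terms are controlled by Claim \ref{cross} and Claim \ref{alphacross}, each of order $\tfrac{C(1+\mm_4(0))}{N}\int_0^t e^{8s/N}\ud s$. Summing the four contributions gives the stated bound, with the $e^{-4dt}\int_0^t\int_0^s e^{8r/N}\ud r\ud s$ piece coming from the first two terms and the $\int_0^t e^{8s/N}\ud s$ piece from the last two. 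There is no serious obstacle here; the content of the corollary is really just the correct bookkeeping of four applications of the earlier claims, together with the uniform boundedness of $\mathcal{E}_\alpha(t)$.
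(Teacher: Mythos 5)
Your proposal is correct and follows the natural (and clearly intended) argument: expand the product into four terms, apply Claim \ref{crossdirection} and Claim \ref{alphacrossdirection} to the two terms carrying $(v^k_\alpha)^2$, note that $\mathcal{E}_\alpha(t)=1+D_{\alpha\alpha}e^{-4dt}$ is uniformly bounded by \eqref{eta}, and reduce the $k$-sum in the remaining two terms to apply Claim \ref{cross} and Claim \ref{alphacross}. The paper states this corollary without proof, and your bookkeeping matches what it implicitly asserts.
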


\begin{cor}\label{cc}
Claim \ref{alphacrossdirection} and Claim \ref{ijik} imply that
$$\begin{aligned}&\int_{\R^{dN}}F_N(t,V)\frac{1}{N^3}\sum_{i\neq j\neq k}\big(v^i\cdot v^j- v^i_\alpha v^j_\alpha\big)\big(v^i\cdot v^k- v^i_\alpha v^k_\alpha\big)\ud V\\\lesssim &\frac{1+\mm_4(0)}{N}e^{-4dt}\int_0^t\big(e^{\frac{8s}{N}}+\int_0^se^{\frac{8r}{N}}\ud r\big)\ud s.
\end{aligned}
$$
\end{cor}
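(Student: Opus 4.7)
The plan is to reduce the corollary to a componentwise expansion and then apply the two cited bounds termwise. First I would rewrite
\begin{equation*}
v^i\cdot v^j-v^i_\alpha v^j_\alpha=\sum_{\gamma\neq\alpha}v^i_\gamma v^j_\gamma,
\end{equation*}
so that the integrand factors as
\begin{equation*}
(v^i\cdot v^j-v^i_\alpha v^j_\alpha)(v^i\cdot v^k-v^i_\alpha v^k_\alpha)=\sum_{\gamma\neq\alpha}\sum_{\delta\neq\alpha}v^i_\gamma v^i_\delta\,v^j_\gamma v^k_\delta.
\end{equation*}

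Next I would split the inner double sum according to whether $\gamma=\delta$ or $\gamma\neq\delta$. Each diagonal term is of the form $(v^i_\gamma)^2 v^j_\gamma v^k_\gamma$; by the exchangeability of $F_N$ in the particle labels, swapping $i$ and $k$ inside the $i\neq j\neq k$ summation brings it to $v^i_\gamma v^j_\gamma (v^k_\gamma)^2$, which is precisely the integrand bounded by Claim \ref{alphacrossdirection} (with $\alpha$ there replaced by $\gamma$). Each off-diagonal term $v^i_\gamma v^i_\delta\,v^j_\gamma v^k_\delta$ with $\gamma\neq\delta$ is exactly the quantity estimated by Lemma \ref{ijik} (with $(\alpha,\beta)$ replaced by $(\gamma,\delta)$).

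Since the outer sums over $\gamma,\delta$ involve only finitely many indices (at most $d^2$), the full integral is bounded by a constant multiple of the larger of the two right-hand sides. The bound $\int_0^t\bigl(e^{8s/N}+\int_0^s e^{8r/N}\,\ud r\bigr)\ud s$ from Lemma \ref{ijik} dominates the bound $\int_0^t\int_0^s e^{8r/N}\,\ud r\,\ud s$ from Claim \ref{alphacrossdirection}, which recovers exactly the right-hand side claimed in Corollary \ref{cc}.

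\emph{Main obstacle.} There is no genuine technical obstacle: the corollary is essentially a bookkeeping exercise. The only mildly delicate point is the symmetrization step used in the diagonal case, so that the square sits on the correct particle index to match the form of Claim \ref{alphacrossdirection}; this relies only on the invariance of the integral $\int F_N\,\cdot\,\ud V$ under permutation of the labels $i,j,k$, which is immediate from exchangeability.
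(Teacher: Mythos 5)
Your argument is correct and is essentially the bookkeeping the authors left implicit (the paper gives no separate proof of Corollary~\ref{cc}, merely citing Claim~\ref{alphacrossdirection} and Lemma~\ref{ijik}). Expanding $v^i\cdot v^j - v^i_\alpha v^j_\alpha=\sum_{\gamma\neq\alpha}v^i_\gamma v^j_\gamma$ and likewise for the $k$-factor gives $\sum_{\gamma,\delta\neq\alpha}v^i_\gamma v^i_\delta v^j_\gamma v^k_\delta$; the diagonal $\gamma=\delta$ terms are $(v^i_\gamma)^2 v^j_\gamma v^k_\gamma$, which after relabelling $i\leftrightarrow k$ (allowed by exchangeability and the symmetry of the index set $\{i\neq j\neq k\}$) match the integrand of Claim~\ref{alphacrossdirection}, while the $\gamma\neq\delta$ terms match the integrand of Lemma~\ref{ijik}. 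Since there are at most $d^2$ such $(\gamma,\delta)$ pairs and the bound from Lemma~\ref{ijik} dominates that of Claim~\ref{alphacrossdirection}, the stated estimate follows. The only point worth flagging is that both cited results give one-sided upper bounds, not absolute-value bounds, but since you are only summing finitely many of them to obtain another upper bound, the conclusion still holds; the symmetrization step you single out is indeed the one nonobvious move, and you justify it correctly.
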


\begin{cor}\label{aa}
Lemma \ref{direction} and Lemma \ref{directiondirection} imply that
 $$
\begin{aligned}
&\int_{\R^{dN}}F_N(t,V)\frac{1}{N^2}\sum_{ j\neq k}\big((v^j_\alpha)^2-\mathcal{E}_\alpha(t)\big)\big((v^k_\alpha)^2-\mathcal{E}_\alpha(t)\big)\ud V\\\lesssim&\frac{1+\mm_4(0)}{N}\Big(e^{-4dt}\int_0^t\int_0^s e^{\frac{8r}{N}}\ud r\ud s+e^{-8dt}\int_0^te^{-4d\tau}\int_0^\tau\int_0^s e^{\frac{8r}{N}}\ud r\ud s\ud \tau\Big).
\end{aligned}
$$
\end{cor}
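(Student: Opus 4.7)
The strategy is a direct algebraic expansion of the centered product followed by applications of Lemma~\ref{direction} and Lemma~\ref{directiondirection}.

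First, I would expand $\big((v^j_\alpha)^2-\me_\alpha\big)\big((v^k_\alpha)^2-\me_\alpha\big)=(v^j_\alpha)^2(v^k_\alpha)^2-\me_\alpha\big((v^j_\alpha)^2+(v^k_\alpha)^2\big)+\me_\alpha^2$, sum over $j\neq k$ with the prefactor $1/N^2$, and integrate against $F_N$. Using $\frac{1}{N^2}\#\{(j,k):j\neq k\}=\frac{N-1}{N}$ together with the exchangeability identity $\int_{\R^{dN}}F_N\frac{1}{N^2}\sum_{j\neq k}(v^j_\alpha)^2\ud V=\frac{N-1}{N}\Psi_\alpha(t)$, this rewrites the left-hand side as
\begin{equation*}
\text{LHS}=\Psi_{dd}(t)-2\tfrac{N-1}{N}\me_\alpha(t)\Psi_\alpha(t)+\tfrac{N-1}{N}\me_\alpha(t)^2,
\end{equation*}
where $\Psi_{dd}(t):=\int_{\R^{dN}}F_N\frac{1}{N^2}\sum_{j\neq k}(v^j_\alpha)^2(v^k_\alpha)^2\ud V$ is exactly the object bounded in Lemma~\ref{directiondirection} and $\Psi_\alpha(t)$ is the single-particle directional quantity from Lemma~\ref{direction}.

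Next, I would reorganize this into the centered deviations directly controlled by the two lemmas:
\begin{equation*}
\text{LHS}=\big(\Psi_{dd}(t)-\me_\alpha(t)^2\big)-2\tfrac{N-1}{N}\me_\alpha(t)\big(\Psi_\alpha(t)-\me_\alpha(t)\big)+\tfrac{1}{N}\me_\alpha(t)^2.
\end{equation*}
Lemma~\ref{directiondirection} bounds the first bracket by the second integral appearing in the claimed estimate. Lemma~\ref{direction}, combined with the uniform boundedness $|\me_\alpha(t)|=|1+D_{\alpha\alpha}e^{-4dt}|\leq C$ (together with the analogous two-sided Gr\"onwall argument that gives both signs of the deviation), controls the middle term by a constant multiple of the first integral. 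The residual $\me_\alpha(t)^2/N=O(1/N)$ is of the same order as, and absorbable into, the stated bound for $t$ bounded away from zero.

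The main subtle point is reconciling the residual $\me_\alpha^2/N$ with the fact that the left-hand side vanishes at $t=0$ by independence. This is resolved by observing that the true initial value is $\Psi_{dd}(0)=\frac{N-1}{N}\me_\alpha(0)^2$ rather than $\me_\alpha(0)^2$, so that $\Psi_{dd}(0)-\me_\alpha(0)^2=-\me_\alpha(0)^2/N<0$. Tracking this sharper initial condition through the Gr\"onwall argument behind Lemma~\ref{directiondirection} yields an additional non-positive contribution $-\me_\alpha(0)^2 e^{-8dt}/N$, which cancels the stray $\me_\alpha(t)^2/N$ up to decaying lower-order terms that fit within the claimed bound. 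Alternatively, in the scale of application inside Theorem~\ref{LLN} the $O(1/N)$ slack is harmless since it can be absorbed into the first parenthesized integral, which is itself $\gtrsim 1/N$ on the relevant range of $t$.
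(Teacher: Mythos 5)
Your proposal is correct and is essentially the paper's intended argument: the paper offers no proof of Corollary \ref{aa} beyond citing the two lemmas, and your expansion of the centered product into $\big(\Psi_{dd}-\me_\alpha^2\big)-2\tfrac{N-1}{N}\me_\alpha\big(\Psi_\alpha-\me_\alpha\big)+\tfrac{1}{N}\me_\alpha^2$, estimated by Lemma \ref{directiondirection} and Lemma \ref{direction} respectively, is exactly what is meant. Your extra care about needing the two-sided (lower-bound) version of Lemma \ref{direction} for the cross term and about the $O(1/N)$ combinatorial residual goes slightly beyond what the paper records; both points are legitimate and are silently absorbed in the paper, whose use of these corollaries in Theorem \ref{LLN} only requires the overall $O((1+\mm_4(0))(1+T)/N)$ control.
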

Hence the diagonal part \eqref{dig part} and the off-diagonal part \eqref{off-dig part} together yield that, 
$$
\begin{aligned}
&\sup_{t\in[0,T]}\big(I_{\alpha\neq\beta}(t)+I_{\alpha=\beta}(t)\big)\lesssim\frac{1+\mm_4(0)}{N}\Big[\int_0^t\Big(e^{\frac{8\tau}{N} }+e^{-4d\tau}\int_0^\tau\big(e^{\frac{8s }{N}}+\int_0^s e^{\frac{8r }{N}}\ud r\big)\ud s\Big)\ud \tau\\&+e^{-4dt}\int_0^t\big(e^{\frac{8s }{N}}+\int_0^s e^{\frac{8r }{N}}\ud r\big)\ud s+e^{-8dt}\int_0^te^{-4d\tau}\int_0^\tau\int_0^s e^{\frac{8r}{N}}\ud r\ud s\ud \tau\Big]\\&\leq \frac{C(1+\mm_4(0))(1+T)}{N},
\end{aligned}
$$
where the  integral of time has the order 
$$
\int_0^Te^{\frac{8s}{N}}\ud s=\frac{N}{8}(e^{\frac{8T}{N}}-1)\leq CT, \quad\text{when}\quad T\sim O(N).
$$
We conclude the proof of the Law of Large Numbers Theorem \ref{LLN} by combining with the estimate \eqref{mathcalS}.

\subsection*{Acknowledgements}
This work was partially supported by the National Key R\&D
Program of China, Project Number 2021YFA1002800. 
JAC was supported by the Advanced Grant Nonlocal-CPD (Nonlocal PDEs for Complex Particle dynamics: Phase Transitions, Patterns and Synchronization) of the European Research Council Executive Agency (ERC) under the European Union’s Horizon 2020 research and innovation programme (grant agreement No. 883363). JAC was also partially supported by the “Maria de Maeztu” Excellence Unit IMAG, reference CEX2020-001105-M, funded by MCIN/AEI/10.13039/501100011033/ and the EPSRC grant numbers EP/T022132/1 and EP/V051121/1. PEJ was partially
supported by NSF DMS Grants 2205694 and 2219297. ZW was partially supported by NSFC grant No.12171009 and  Young
Elite Scientist Sponsorship Program by China Association for Science and Technology
(CAST) No. YESS20200028.

\appendix
\section{ Moment Estimate of the Liouville Equation}\label{appendix}
In the appendix, we estimate the $p$-th order moment ($p>2$) of the first marginal of solution of the Liouville equation \eqref{master}. We denote the average $p$-th order moment as $\mm_p(t)$, and by the exchangeability,
$$
\mm_p(t):=\int_{\R^{d}}F_{N,1}(t,v^1)|v^1|^p\ud v^1 =\int_{\R^{dN}}F_N(t,V)\frac{1}{N}\sum_{k=1}^N|v^k|^p\ud V, $$ with $\mm_p(0)=\int_{\R^{d}}f_0(v)|v|^p\ud v.$
We have the following estimate.
\begin{prop}\label{moment estimate}
If we assume $\M_p(0)$ is finite, i.e., $f_0$ has finite $p$-th order moment, then the following estimate holds
 $$
\mm_p(t)\leq \mm_p(0)\left(\frac{p+d-3}{d-1}\right)^{\frac{p}{2}}e^{\frac{p(p-2)t}{N}}.
$$   
\end{prop}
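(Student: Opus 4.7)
The plan is to apply the weak formulation \eqref{weak form Liouville} of the Liouville equation to the symmetric observable $\Phi(V)=\frac{1}{N}\sum_{k=1}^N|v^k|^p$ and derive a closed scalar differential inequality for $\mm_p(t)$. Computing $\nabla_{v^i}\Phi=\frac{p}{N}|v^i|^{p-2}v^i$ and $\nabla^2_{v^i}\Phi=\frac{p}{N}|v^i|^{p-2}\mathrm{Id}+\frac{p(p-2)}{N}|v^i|^{p-4}v^i\otimes v^i$, and plugging in the Maxwellian coefficients $a_{\alpha\beta}(z)=|z|^2\delta_{\alpha\beta}-z_\alpha z_\beta$ and $b_\alpha(z)=-(d-1)z_\alpha$, the key algebraic step is the pointwise cancellation
\begin{equation*}
\nabla^2(|v|^p){:}a(v-w)+2\nabla(|v|^p)\cdot b(v-w)=-p(d-1)|v|^p+p(d+p-3)|v|^{p-2}|w|^2-p(p-2)|v|^{p-4}(v\cdot w)^2,
\end{equation*}
in which the would-be linear-in-$v\cdot w$ contributions disappear exactly. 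This is the direct particle analogue of the closed moment identity for the Landau--Maxwellian operator at the kinetic level.

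Substituting into \eqref{weak form Liouville} and collapsing $\frac{1}{N^2}\sum_{i,j}$ via exchangeability, with $\overline G:=\frac{1}{N}\sum_k G(v^k)$, produces a balance of the form
\begin{equation*}
\frac{d\mm_p}{dt}=-p(d-1)\mm_p+p(d+p-3)\,\mathbb{E}\bigl[\overline{|v|^{p-2}}\cdot\overline{|v|^2}\bigr]-p(p-2)\,\mathbb{E}\Bigl[\textstyle\sum_{\alpha,\beta}\overline{|v|^{p-4}v_\alpha v_\beta}\cdot\overline{v_\alpha v_\beta}\Bigr].
\end{equation*}
The last trace term is a non-negative inner product of two positive-semidefinite averaged symmetric tensors and can be dropped for an upper bound. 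Splitting off the diagonal $i=j$ contribution from $\mathbb{E}[\overline{|v|^{p-2}}\cdot\overline{|v|^2}]$, which contributes a harmless $\frac{1}{N}\mm_p$, and applying Young's inequality $|v^1|^{p-2}|v^2|^2\le\frac{p-2}{p}|v^1|^p+\frac{2}{p}|v^2|^p$ together with exchangeability to the off-diagonal two-particle piece, yields a preliminary differential inequality for $\mm_p$.

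The principal difficulty will be to extract the sharp rate $\frac{p(p-2)}{N}$ (rather than the naive $p(p-2)$) in front of $\mm_p$ inside the exponent: the crude Young bound $\mathbb{E}[|v^1|^{p-2}|v^2|^2]\le\mm_p$ only produces a coefficient of order $p(p-2)(N-1)/N$, far too coarse for the time horizon $T\sim O(N)$ required in Section \ref{poc}. To achieve the improved $\frac{1}{N}$ factor I expect to need a mean-field decomposition of $\mathbb{E}[\overline{|v|^{p-2}}\cdot\overline{|v|^2}]$ around the factorized product $d\,\mm_{p-2}$, exploiting the exact conservation $\mm_2(t)\equiv d$ from Lemma \ref{energy} to absorb the fluctuation of $\overline{|v|^2}$ into a correction of size $O(\mm_p/N)$. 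After the substitution $u=\mm_p^{2/p}$ linearizes the resulting inequality into $\frac{du}{dt}+2(d-1)u\le 2d(d+p-3)+\frac{2(p-2)}{N}u$, Gronwall applied around the stationary value $u_\infty=\frac{d(d+p-3)}{d-1}$ of the unperturbed part delivers both the prefactor $\bigl(\frac{p+d-3}{d-1}\bigr)^{p/2}$ and the growth factor $e^{p(p-2)t/N}$. An induction on $p$ starting from the conserved base case $p=2$ then closes the estimate.
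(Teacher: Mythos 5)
Your opening computation is correct and matches the paper's: applying the weak form of the Liouville equation to $\frac1N\sum_k|v^k|^p$, the pointwise identity
\begin{equation*}
\nabla^2(|v|^p):a(v-w)+2\nabla(|v|^p)\cdot b(v-w)=-p(d-1)|v|^p+p(d+p-3)|v|^{p-2}|w|^2-p(p-2)|v|^{p-4}(v\cdot w)^2
\end{equation*}
does hold, the linear terms in $v\cdot w$ cancel, and dropping the non-negative last contraction is legitimate; this is exactly the paper's inequality \eqref{otherhand}. However, the proof is not complete: the entire difficulty, which you yourself flag, is to get the factor $\frac{p(p-2)}{N}$ rather than $p(p-2)$ in the exponent, and your proposed resolution --- a ``mean-field decomposition'' of $\E\bigl[\overline{|v|^{p-2}}\cdot\overline{|v|^2}\bigr]$ around $d\,\mm_{p-2}$ with a fluctuation ``of size $O(\mm_p/N)$'' --- is asserted, not proved. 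Lemma \ref{energy} only gives $\E[\overline{|v|^2}]=d$; the quantity you need to control is the covariance of $\overline{|v|^{p-2}}$ and $\overline{|v|^2}$ under $F_N$, which is \emph{not} $O(1/N)$ for a general exchangeable law. Establishing such smallness is itself a law-of-large-numbers/chaos-type statement, and would be circular here, since Proposition \ref{moment estimate} is an input to the LLN argument of Section \ref{poc} (and a Cauchy--Schwarz bound on that covariance would in any case call on moments of order up to $2(p-2)$, which are not available).

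The paper closes this gap by a different device. Instead of decomposing the correlation, it bounds it via H\"older as $\bigl(\int\vphi^{p/2}F_N\bigr)^{2/p}\,[\mm_p(t)]^{(p-2)/p}$ with $\vphi(V)=\frac1N\sum_k|v^k|^2$, and then propagates $\int\vphi^{p/2}F_N$ \emph{separately} through the Liouville equation. For this particular observable an exact algebraic cancellation occurs: the surviving pairwise term is $\frac{p(p-2)}{N^2}\bigl(|v^i|^2|v^j|^2-(v^i\cdot v^j)^2\bigr)\vphi^{p/2-2}$, so that $\frac{\ud}{\ud t}\int\vphi^{p/2}F_N\le\frac{p(p-2)}{N}\int\vphi^{p/2}F_N$; Gr\"onwall plus Jensen with the tensorized initial data then give $\int\vphi^{p/2}F_N(t)\le e^{p(p-2)t/N}\mm_p(0)$. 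In other words, the $1/N$ rate comes from the near-conservation of the empirical kinetic energy at the particle level, quantified on its $p/2$-power, not from an a priori smallness of fluctuations of $\overline{|v|^2}$ under $F_N$. Plugging this into the differential inequality and substituting $u=e^{2(d-1)t}\mm_p^{2/p}$ yields the stated bound; no induction on $p$ is needed. To repair your argument you would need to replace the unproved covariance step by this (or an equivalent) closed estimate on $\int\vphi^{p/2}F_N$.
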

\begin{proof}
By the exchangeability and the weak form of the collision operator \eqref{weak form}, we have
\begin{equation}\label{otherhand}
\begin{aligned}
&\frac{\ud}{\ud t} \int_{\R^{dN}}F_N(t,V)\frac{1}{N}\sum_{k=1}^N|v^k|^p\ud V\\
=&\frac{p(d-1)}{2N^2}\sum_{i, j}\int_{\R^{dN}}\big(|v^{i}|^2|v^{j}|^{p-2}+|v^{j}|^2|v^{i}|^{p-2}\big)F_N\ud V\\&-\frac{p(d-1)}{2N^2}\sum_{i,j}\int_{\R^{dN}}\big(|v^{i}|^p+|v^{j}|^p\big)F_N\ud V\\
&+\frac{p(p-2)}{2N^2}\sum_{i, j}\int_{\R^{dN}}\big(|v^{i}|^2|v^{j}|^{p-2}+|v^{j}|^2|v^{i}|^{p-2}\big)F_N\ud V\\&-\frac{p(p-2)}{2N^2}\sum_{i, j}\int_{\R^{dN}}(v^i\cdot v^j)^2(|v^{j}|^{p-4}+|v^{i}|^{p-4})F_N\ud V\\
\leq& \frac{p(p+d-3)}{N}\sum_{ j}\int_{\R^{dN}}\big(\frac{1}{N}\sum_i|v^{i}|^2\big)|v^{j}|^{p-2}F_N(t,V)\ud V\\&-\frac{p(d-1)}{N}\sum_{i}\int_{\R^{dN}}|v^{i}|^pf_N(t,V)\ud V.
\end{aligned}    
\end{equation}
Let $\vphi(V)=\frac{1}{N}\sum_{k=1}^{N}|v^k|^2$, then by the H\"{o}lder's inequality, 
$$
\begin{aligned}
&\frac{1}{N}\sum_{j=1}^N\int_{\R^{dN}}\vphi(V)|v^{j}|^{p-2}F_N(t,V)\ud V\\
\leq &\frac{1}{N}\sum_{j=1}^N\Big(\int_{\R^{dN}}\vphi
^{\frac{p}{2}}F_N\ud V  \Big)^{\frac{2}{p}}  \Big(\int_{\R^{dN}}|v^j|
^{2}F_N\ud V  \Big)^{\frac{p-2}{p}} \\
\leq &\Big(\int_{\R^{dN}}\vphi
^{\frac{p}{2}}F_N\ud V  \Big)^{\frac{2}{p}}  \Big(\frac{1}{N}\sum_{j=1}^N\int_{\R^{dN}}|v^j|
^{p}F_N\ud V  \Big)^{\frac{p-2}{p}} ,
\end{aligned}
$$
where the last step is due to the concavity of the $\frac{p-2}{p}$-th power. From \eqref{otherhand} we get
\begin{equation}\label{mp}
    \frac{\ud \mm_p(t)}{\ud t}\leq -p(d-1)\mm_p(t)+p(p+d-3)\Big(\int_{\R^{dN}}\vphi
^{\frac{p}{2}}F_N\ud V  \Big)^{\frac{2}{p}} [\mm_p(t)]^{\frac{p-2}{p}}.
\end{equation}
Now, we need to propagate the $\frac{p}{2}$-th moment of $\vphi(V)$, where $\nabla_{v^i}\vphi=\frac{2}{N} v^i$ and $\nabla^2_{v^i}\vphi=\frac{2}{N} \Id$. For any index $i$ and $j$, we have
$$
\nabla_{v^i}\vphi^\frac{p}{2}=\frac{p}{2}\vphi^{\frac{p}{2}-1}\nabla_{v^i}\vphi= \frac{p}{N}v^i\vphi^{\frac{p}{2}-1}\color{black}, \quad
\nabla_{v^j}\vphi^\frac{p}{2}= \frac{p}{N}v^j\vphi^{\frac{p}{2}-1}\color{black}, 
$$
and 
$$
\nabla^2_{v^i}\vphi^\frac{p}{2}=\frac{p}{2}\vphi^{\frac{p}{2}-1}\nabla^2_{v^i}\vphi+\frac{p(p-2)}{4}\vphi^{\frac{p}{2}-2}\nabla_{v^i}\vphi\otimes \nabla_{v^i}\vphi= \frac{p}{N}\vphi^{\frac{p}{2}-1}\Id\color{black}+\frac{p(p-2)}{N^2}\vphi^{\frac{p}{2}-2}v^i\otimes v^i.
$$
We propagate the integral of $\vphi^\frac{p}{2}$ with respect to the joint law $F_N$, using that
$$
\begin{aligned}
&\frac{1}{2}(\nabla_{v^i}^2\vphi+\nabla_{v^j}^2\vphi):a(v^i-v^j)-(\nabla_{v^i}\vphi-\nabla_{v^j}\vphi)\cdot b(v^i-v^j)\\
=& \frac{1}{2}\big(\frac{p}{N}\vphi^{\frac{p}{2}-1}\Id+\frac{p}{N}\vphi^{\frac{p}{2}-1}\Id\big):\big(\Id|v^i-v^j|^2-(v^i-v^j)\otimes (v^i-v^j)\big)\\&-(d-1)\big(\frac{p}{N}v^i\vphi^{\frac{p}{2}-1}-\frac{p}{N}v^j\vphi^{\frac{p}{2}-1}\big)\cdot (v^i-v^j)+\frac{p(p-2)}{N^2}\big(|v^i|^2|v^j|^2-(v^i\cdot v^j)^2\big)\vphi^{\frac{p}{2}-2}\\
=& \frac{p}{N}\vphi^{\frac{p}{2}-1}\Id:\big(\Id|v^i-v^j|^2-(v^i-v^j)\otimes (v^i-v^j)\big)-\frac{p}{N}\vphi^{\frac{p}{2}-1}(d-1)|v^i-v^j|^2\\&+\frac{p(p-2)}{N^2}\big(|v^i|^2|v^j|^2-(v^i\cdot v^j)^2\big)\vphi^{\frac{p}{2}-2}\\=&\frac{p(p-2)}{N^2}\big(|v^i|^2|v^j|^2-(v^i\cdot v^j)^2\big)\vphi^{\frac{p}{2}-2},
\end{aligned}
$$
to arrive at
\begin{equation}\label{estimate of phi}
\begin{aligned}
 &\frac{\ud }{\ud t}\int_{\R^{dN}}\vphi^\frac{p}{2} F_N(t,V)\ud V=\frac{\ud }{\ud t}\int_{\R^{dN}}\big(\frac{1}{N}\sum_{k=1}^{N}|v^k|^2\big)^\frac{p}{2} F_N(t,V)\ud V  \\
 =&\frac{1}{N}\sum_{i,j=1}^{N}\int_{\R^{dN}}\bigg[\frac{1}{2}\big(\nabla^2_{v^{i}}\varphi^{\frac{p}{2}}+\nabla^2_{v^{j}}\varphi^{\frac{p}{2}}  \big):a(v^{i}-v^j) \\&\qquad\qquad\qquad\qquad\qquad\qquad+\big(\nabla_{v^{i}} \varphi^{\frac{p}{2}}-\nabla_{v^{j}} \varphi^{\frac{p}{2}}\big)\cdot b(v^{i}-v^j)\bigg]F_N(t,V)\ud V\\
 =&\frac{p(p-2)}{N^3}\int_{\R^{dN}}\sum_{i,j=1}^{N}\big(|v^i|^2|v^j|^2-(v^i\cdot v^j)^2\big)\vphi^{\frac{p}{2}-2}F_N(t,V)\ud V\\
 \leq &\frac{p(p-2)}{N}\int_{\R^{dN}}\vphi^{\frac{p}{2}}F_N(t,V)\ud V.
\end{aligned}    
\end{equation}
The Gr\"onwall's inequality implies that
$$
\int_{\R^{dN}}\vphi^{\frac{p}{2}}F_N(t,V)\ud V\leq e^{\frac{p(p-2)}{N}t}\int_{\R^{dN}}\vphi^{\frac{p}{2}}F_N(0,V)\ud V,
$$
and by the Jensen's inequality with $p>2$ that
$$
\Big(\frac{1}{N}\sum_{i=1}^N|v_{\alpha}|^2\Big)^{\frac{p}{2}}\leq \frac{1}{N}\sum_{i=1}^N|v_{\alpha}|^p,
$$
hence we have that
$$
\int_{\R^{dN}}\vphi^{\frac{p}{2}}F_N(t,V)\ud V\leq e^{\frac{p(p-2)}{N}t}\int_{\R^{d}}|v|^pf_0(v)\ud v=e^{\frac{p(p-2)}{N}t}\mm_p(0).
$$
And substituting this into \eqref{mp}, we have 
\begin{equation}
    \frac{\ud \mm_p(t)}{\ud t}\leq -p(d-1)\mm_p(t)+p(p+d-3)e^{\frac{2(p-2)}{N}t}[\mm_p(0)]^{\frac{2}{p}} [\mm_p(t)]^{1-\frac{2}{p}}.
\end{equation}
Now we solve this ODE by
$$
\begin{aligned}
 \frac{\ud }{\ud t} \big(e^{2(d-1)t}[\mm_p(t)]^{\frac{2}{p}}\big) =& 2(d-1)e^{2(d-1)t}[\mm_p(t)]^{\frac{2}{p}}+\frac{2}{p}e^{2(d-1)t}[\mm_p(t)]^{\frac{2}{p}-1}\frac{\ud \mm_p(t)}{\ud t}\\
 =&\frac{2}{p}e^{2(d-1)t}[\mm_p(t)]^{\frac{2}{p}-1}\Big(p(d-1)\mm_p(t)+\frac{\ud \mm_p(t)}{\ud t}\Big)\\
 \leq &  2e^{2(d-1)t}(p+d-3)e^{\frac{2(p-2)}{N}t}\big[\mm_p(0) \big]^{\frac{2}{p}} .
\end{aligned}
$$
Then rewrite it into the integral form as
$$
\begin{aligned}
&e^{2(d-1)t}[\mm_p(t)]^{\frac{2}{p}}-[\mm_p(0)]^{\frac{2}{p}}\leq 2(p+d-3)\big[\mm_p(0) \big]^{\frac{2}{p}} \int_0^te^{2(d-1)s}e^{\frac{2(p-2)}{N}s}\ud s\\\leq &2(p+d-3)\big[\mm_p(0) \big]^{\frac{2}{p}} \frac{N}{2(d-1)N+2(p-2)}\Big(e^{2(d-1)t}e^{\frac{2(p-2)}{N}t}-1\Big),     
\end{aligned}
$$
i.e.
$$
\begin{aligned}
 \left[\mm_p(t)\right]^{\frac{2}{p}}&\leq [\mm_p(0)]^{\frac{2}{p}}\bigg(e^{-2(d-1)t}+\frac{(p+d-3)N}{(d-1)N+(p-2)}\Big(e^{\frac{2(p-2)}{N}t}-e^{-2(d-1)t}\Big)\bigg)  \\ 
 &\leq [\mm_p(0)]^{\frac{2}{p}}\Big(e^{-2(d-1)t}+\frac{p+d-3}{d-1}\big(e^{\frac{2(p-2)}{N}t}-e^{-2(d-1)t}\big)\Big)  \\ 
 &\leq [\mm_p(0)]^{\frac{2}{p}}\Big(\frac{p+d-3}{d-1}e^{\frac{2(p-2)}{N}t}\Big),\\ 
\end{aligned}
$$
where in the last step we used $\frac{p+d-3}{d-1}>1$ when $p>2 $.
That is,
$$
\mm_p(t)\leq \mm_p(0)\Big(\frac{p+d-3}{d-1}e^{\frac{2(p-2)}{N}t}\Big)^{\frac{p}{2}} =\mm_p(0)\left(\frac{p+d-3}{d-1}\right)^{\frac{p}{2}}e^{\frac{p(p-2)t}{N}}.
$$
In conclusion, we have proved the desired estimate.
\end{proof} 

\bigskip

\bibliographystyle{alpha}
\bibliography{ref}

\end{document}